\theoremstyle{plain}
\newtheorem{theo}{Theorem}[section]
\crefname{theo}{Theorem}{Theorems}
\Crefname{theo}{Theorem}{Theorems}
\newtheorem{prop}[theo]{Proposition}
\crefname{prop}{Proposition}{Propositions}
\Crefname{prop}{Proposition}{Propositions}
\newtheorem{lem}[theo]{Lemma}
\crefname{lem}{Lemma}{Lemmas}
\Crefname{lem}{Lemma}{Lemmas}
\newtheorem{cor}[theo]{Corollary}
\crefname{cor}{Corollary}{Corollaries}
\Crefname{cor}{Corollary}{Corollaries}
\crefname{claim}{Claim}{Claims}
\Crefname{claim}{Claim}{Claims}
\crefname{property}{Property}{Properties}
\Crefname{property}{Property}{Properties}
\crefname{problem}{Problem}{Problems}
\Crefname{problem}{Problem}{Problems}
\theoremstyle{definition}
\newtheorem{defi}[theo]{Definition}
\crefname{defi}{Definition}{Definitions}
\Crefname{defi}{Definition}{Definitions}
\crefname{notation}{Notation}{Notations}
\Crefname{notation}{Notation}{Notations}
\newtheorem {convention}[theo]{Convention}
\crefname{convention}{Convention}{Conventions}
\Crefname{convention}{Convention}{Conventions}
\newtheorem{cond}[theo]{Condition}
\crefname{cond}{Condition}{Conditions}
\Crefname{cond}{Condition}{Conditions}
\crefname{assum}{Assumption}{Assumptions}
\Crefname{assum}{Assumption}{Assumptions}
\theoremstyle{remark}
\newtheorem{rem}[theo]{Remark}
\crefname{rem}{Remark}{Remarks}
\Crefname{rem}{Remark}{Remarks}
\newtheorem{ex}[theo]{Example}
\crefname{ex}{Example}{Examples}
\Crefname{ex}{Example}{Examples}
\crefname{section}{Section}{Sections}
\Crefname{section}{Section}{Sections}
\crefname{subsection}{Subsection}{Subsections}
\Crefname{subsection}{Subsection}{Subsections}
\crefname{figure}{Figure}{Figures}
\Crefname{figure}{Figure}{Figures}
\newtheorem*{acknowledgement}{Acknowledgement}
\newcommand{\Z}{\mathbb{Z}}
\newcommand{\R}{\mathbb{R}}
\newcommand{\C}{\mathbb{C}}
\newcommand{\CP}{\mathbb{CP}}
\newcommand{\spc}{\mathrm{spin}^c}
\newcommand{\Area}{\mathrm{Area}}
\newcommand{\Hyp}{\mathrm{Hyp}}
\newcommand{\sign}{\mathop{\mathrm{sign}}\nolimits}
\newcommand{\calA}{{\mathcal A}}
\newcommand{\calB}{{\mathcal B}}
\newcommand{\calC}{{\mathcal C}}
\newcommand{\calG}{{\mathcal G}}
\newcommand{\calH}{{\mathcal H}}
\newcommand{\calK}{{\mathcal K}}
\newcommand{\calL}{{\mathcal L}}
\newcommand{\calM}{{\mathcal M}}
\newcommand{\calW}{{\mathcal W}}
\newcommand{\scrH}{{\mathscr H}}
\newcommand{\scrK}{{\mathscr K}}
\newcommand{\scrS}{{\mathscr S}}
\newcommand{\fraks}{\mathfrak{s}}
\newcommand{\frakt}{\mathfrak{t}}
\newcommand{\Met}{\mathrm{Met}}
\newcommand{\circPi}{\mathring{\Pi}}
\newcommand{\Si}{\Sigma}
\newcommand{\si}{\sigma}
\newcommand{\simin}[1]{\si_{\rm min}(#1)}
\newcommand{\simax}[1]{\si_{\rm max}(#1)}
\newcommand{\siSi}{\si \setminus \{\Sigma\}}
\newcommand{\Cy}{\mathop{\mathrm{Cy}}\nolimits}
\newcommand{\barCy}{\overline{\mathrm{Cy}}}
\newcommand{\Cystd}{\mathop{\mathrm{Cy}}_{\rm std}\nolimits}
\newcommand{\barCystd}{\overline{\mathrm{Cy}}_{\rm std}}
\newcommand{\Bd}{\mathop{\mathrm{Bd}}\nolimits}
\newcommand{\extdel}{{\rm ext}\partial}
\newcommand{\frakextdel}{{\mathfrak e}{\mathfrak x}{\mathfrak t}\partial}
\newcommand{\vp}{\varphi}
\newcommand{\inc}{\hookrightarrow}
\newcommand{\lb}{\left<}
\newcommand{\rb}{\right>}
\newcommand{\del}{\partial}
\newcommand{\codel}{\delta}
\newcommand{\im}{\mathop{\mathrm{Im}}\nolimits}
\newcommand{\dom}{\mathop{\mathrm{dom}}\nolimits}
\newcommand{\Conv}{\mathop{\mathrm{Conv}}\nolimits}
\newcommand{\SWinv}{\mathop{\mathrm{SW}}\nolimits}
\newcommand{\SWcoh}{\mathop{{\mathbb S}{\mathbb W}}\nolimits}
\newcommand{\SWcoch}[1][{{}}]{\mathop{{\mathcal S}{\mathcal W}_#1}\nolimits}
\newcommand{\SW}{Seiberg--Witten }
\newcommand{\id}{\mathrm{id}}
\newcommand{\bvec}[1]{\boldsymbol{#1}}
\newcommand{\bep}{\boldsymbol{\epsilon}}
\newcommand{\bSi}{\boldsymbol{\Sigma}}
\newcommand{\bS}{\boldsymbol{S}}
\newcommand{\Signk}{\mathrm{Sign}^{k}}
\newcommand{\Euler}{\mathop{\chi}\nolimits}
\newcommand{\genus}{\mathop{g}\nolimits}
\newcommand{\Map}{\mathop{{\rm Map}}\nolimits}
\title[A cohomological Seiberg--Witten invariant]{A cohomological Seiberg--Witten invariant emerging from the adjunction inequality}
\author{Hokuto Konno}
\address{Graduate School of Mathematical Sciences, the University of Tokyo, 3-8-1 Komaba, Meguro, Tokyo 153-8914, Japan}
\email{konno@ms.u-tokyo.ac.jp}
\date{}
\begin{document}

\maketitle

\begin{abstract}{}
We construct an invariant of closed $\spc$ $4$-manifolds using families of \SW equations.
This invariant is formulated as a cohomology class on a certain abstract simplicial complex consisting of embedded surfaces of a $4$-manifold.
We also give examples of $4$-manifolds which admit positive scalar curvature metrics and for which this invariant does not vanish.
This non-vanishing result of our invariant provides a new class of adjunction-type genus constraints on configurations of embedded surfaces in a $4$-manifold whose \SW invariant vanishes.
\end{abstract}

\tableofcontents

\section{Introduction}

In this paper we construct an invariant of closed $\spc$ $4$-manifolds.
This invariant is defined using families of \SW equations and formulated as a cohomology class on a certain abstract simplicial complex corresponding to a given $\spc$ $4$-manifold.
This simplicial complex encodes information about mutual intersections of embedded surfaces violating the adjunction inequalities with respect to the given $\spc$ structure.
We also give examples of $4$-manifolds which admit positive scalar curvature metrics and for which the invariant does not vanish.
This non-vanishing result of our invariant provides a new class of adjunction-type genus constraints on configurations of embedded surfaces in a $4$-manifold whose \SW invariant vanishes, for example, $k\CP^{2}\#l(-\CP^{2})$ for some $k, l > 3$.

Starting with Witten~\cite{MR1306021}, the \SW invariant of smooth $\spc$ $4$-manifolds has occupied an important place in $4$-dimensional geometry and topology.
This invariant is defined, roughly speaking, by counting the points of the moduli space of the solutions to the \SW equations.
For a given $\spc$ structure $\fraks$, the \SW invariant is defined to be trivial if the formal dimension $d(\fraks)$, given by the formula $d(\fraks)= (c_1(\fraks)^2 - 2\Euler(X) -3\sign(X))/4$, of the moduli space of the \SW equations with respect to $\fraks$ is negative.
However, even in the case that $d(\fraks)<0$, one can fruitfully consider a counting argument for the parameterized moduli space of a suitable family of \SW equations.
(This type of ideas has been suggested in Donaldson~\cite{MR1171888,MR1339810}.)
Indeed, the formal dimension of the moduli space of a family of \SW equations parameterized by a $k$-dimensional space is $d(\fraks) + k$, which is non-negative for $k \geq -d(\fraks)$.
Based on this idea, Ruberman~\cite{MR1671187, MR1734421, MR1874146} has defined several invariants of diffeomorphisms on $4$-manifolds using moduli spaces parameterized by $1$-dimensional spaces and Li--Liu~\cite{MR1868921} have defined an invariant of families of $\spc$ $4$-manifolds.
The main aim of this paper is to construct an invariant of $\spc$ $4$-manifolds using families of \SW equations;  we show that this invariant is non-trivial for some $\spc$ $4$-manifold $(X, \fraks)$ with $d(\fraks) < 0$.
Note that we construct an invariant of a {\it single} $4$-manifold, while Li--Liu's is of a {\it family} of $4$-manifolds.

The invariant that we construct in this paper also has other roots in a classical problem in $4$-dimensional topology: the genus bound problem.
The \SW theory has been used to attack this problem.
We first recall the celebrated paper due to Kronheimer--Mrowka~\cite{MR1306022} on the Thom conjecture.
By arguments in the paper, for a given $\spc$ $4$-manifold $(X, \fraks)$, one can show that a strong lower bound on genus called the adjunction inequality (with respect to $\fraks$) holds for an embedded surface in $X$ if the \SW invariant of $(X, \fraks)$ does not vanish and the surface has non-negative self-intersection number.
However, for a surface embedded in a $\spc$ $4$-manifold whose \SW invariant vanishes, the adjunction inequality does not hold in general.
(For example, see Nouh~\cite{MR3159966} for the $4$-manifold $\CP^{2}\#\CP^{2}$.)
Although one cannot expect any systematic result on the adjunction inequality for a {\it single} embedded surface in such a $\spc$ $4$-manifold by this reason, one can do for {\it configurations} of embedded surfaces.
The first result in this direction is due to Strle~\cite{MR2064429}.
He has given certain constraints on configurations of embedded surfaces with positive self-intersection number.
More precisely, Strle has considered not only a single surface but also several embedded surfaces, and he has shown that if they are mutually disjoint, then the adjunction inequality holds for at least one of them under some conditions.
Inspired by this Strle's work, the author~\cite{Konno} has given constraints on configurations of embedded surfaces with zero self-intersection number.
We note that both of Strle's and the author's results are valid for $\spc$ $4$-manifolds whose \SW invariants vanish.
To obtain the constraints on configurations, the author used a higher-dimensional family of \SW equations on a $4$-manifold.
This proof suggests that one might expect some systematic way to relate families of \SW equations and adjunction-type genus constraints on configurations.

In this paper we introduce an abstract simplicial complex associated with a $\spc$ $4$-manifold $(X, \fraks)$ and define an invariant of $(X, \fraks)$ as a simplicial cohomology class on this simplicial complex.
The simplicial complex encodes information about mutual intersections of all embedded surfaces violating the adjunction inequalities with respect to $\fraks$ and having zero self-intersection number.
In order to define the simplicial complex, we start with the following ``ambient" simplicial complex $\bar{\calK}$, which Mikio Furuta introduced to the author.
Throughout this paper, we mean by a {\it surface} a smooth oriented closed connected $2$-dimensional manifold.

\begin{defi}
Let $X$ be a smooth oriented closed connected $4$-manifold.
We define an abstract simplicial complex $\bar{\calK} = \bar{\calK}(X)$ as follows: 
\begin{itemize}
\item The vertices of $\bar{\calK}$ are defined as smoothly embedded surfaces (i.e. smooth oriented closed $2$-dimensional submanifolds of $X$) with self-intersection number zero.
We denote by $V(\bar{\calK})$ the set of vertices of $\bar{\calK}$.
\item For $n \geq 1$, a collection of $(n+1)$ vertices $\Si_0, \ldots, \Si_n \in V(\bar{\calK})$ spans an $n$-simplex if and only if $\Si_0, \ldots, \Si_n$ are mutually disjoint.
\end{itemize}
We call $\bar{\calK}$ the {\it complex of surfaces} of $X$.
\end{defi}

The complex of surfaces is a $4$-dimensional analog of the complex of curves due to Harvey~\cite{MR624817} in $2$-dimensional topology.
Note that, in the definition of the complex of surfaces above, we do not consider isotopy classes of embeddings of surfaces as in the definition of the complex of curves.
(See \cref{rem: impossibility on isotopy classes} on this point.)

We topologize $\bar{\calK}(X)$ as a simplicial complex, namely with the weak topology.
In fact, $\bar{\calK}(X)$ is contractible for any $X$
(\cref{prop : any complex of surface is contractible}), and therefore one cannot use any homotopical information of $\bar{\calK}(X)$ to define a non-trivial invariant of $X$.
We can, however, find a suitable subcomplex of $\bar{\calK}$ which is not homotopically trivial in general and will be used as parameter space (up to homotopy) of families of \SW equations.
The subcomplex is defined considering one of special phenomena in $4$-dimensional topology, namely the adjunction inequality.
This subcomplex is the main ingredient to study higher-dimensional families of \SW equations in this paper.
We will denote by $\genus(\Si)$ the genus of a surface $\Si$.
Set
\[
\Euler^-(\Si) := \max\{-\Euler(\Si),0\},
\]
where $\Euler(\Si) = 2 - 2\genus(\Si)$ is the Euler characteristic of $\Si$.

\begin{defi}
Let $\fraks$ be a $\spc$ structure on $X$.
We define
$\calK = \calK(X, \fraks)$ 
as the full subcomplex of $\bar{\calK}(X)$ spanned by
\[
V(\calK) := 
\Set{ \Si \in V(\bar{\calK}) | \Euler^-(\Si) < |c_1(\fraks) \cdot [\Si]| }.
\]
Namely a collection of vertices $\Si_0, \ldots, \Si_n \in V(\calK)$ spans an $n$-simplex of $\calK$ if and only if it does in $\bar{\calK}$.
We call $\calK$ the {\it adjunction complex of surfaces} of $(X, \fraks)$.
\end{defi}

We note a similarity between the simplicial complex $\calK$ above and the Kakimizu complex~\cite{MR1177053} in $3$-dimensional topology and knot theory: in the definitions of both simplicial complexes one focuses one's attention on surfaces of low genus.

We now introduce our invariant, which will be defined in \cref{subsection: Definition of the invariant}, in the most basic setting.
Let $(X, \fraks)$ be a smooth oriented closed $\spc$ $4$-manifold equipped with a homology orientation.
Here a homology orientation means an orientation of the vector space $H^1(X;\R) \oplus H^+(X;\R)$, where $H^+(X;\R)$ is a maximal positive-definite subspace of $H^2(X;\R)$ with respect to the intersection form of $X$.
We denote by $b^+(X)$ the dimension of $H^+(X;\R)$.
Let $n$ be the integer given by $d(\fraks) = -(n+1)$.
Assume that $n \geq 0$ and that
$b^+(X) \geq n+3$.
Under these assumptions, we shall construct a cohomology class
\[
\SWcoh(X, \fraks) \in H^n(\calK(X, \fraks) ;\Z)
\]
using $(n+1)$-dimensional families of \SW equations.
We will show that this cohomology class $\SWcoh(X, \fraks)$ is an invariant of a $\spc$ $4$-manifold $(X, \fraks)$.
We call $\SWcoh(X, \fraks)$ the {\it cohomological \SW invariant associated with the adjunction complex of surfaces}.
(In fact, we can extend the definition of the cohomological \SW invariant to more general $\spc$ structures.
See \cref{subsection: Definition of the invariant for general spinc structures} on this point.)
Since $H^{\ast}(\calK(X, \fraks) ;\Z)$, to which $\SWcoh(X, \fraks)$ belongs, is also a diffeomorphism invariant of $(X, \fraks)$, the calculation of $\SWcoh(X, \fraks)$ seems hard.
Nevertheless, we can show that there are examples of $(X, \fraks)$ with $\SWcoh(X, \fraks) \neq 0$.
In particular, we obtain examples of $(X, \fraks)$ such that the homotopy type of $\calK(X, \fraks)$ is non-trivial.

A remarkable point is that
\[
\SWcoh(k\CP^{2} \# l(-\CP^{2}), \fraks) \neq 0
\]
holds for some $k, l> 3$ and some $\spc$ structure $\fraks$ on $k\CP^{2} \# l(-\CP^{2})$.
(See \cref{cor: non-vanishing on psc manifold}.)
Note that the usual \SW invariant vanishes for any $\spc$ structure on it and the Donaldson invariant of this manifold also vanishes.
Furthermore, the refinement of the \SW invariant called the Bauer--Furuta invariant~\cite{MR2025298} of this $4$-manifold also vanishes.
Indeed, this $4$-manifold admits a metric with positive scalar curvature, and the Bauer--Furuta invariant vanishes for $4$-manifolds admitting a positive scalar curvature metric as the \SW invariant does.
To the best of the author's knowledge, $\SWcoh(X, \fraks)$ is the first invariant which is defined using the \SW theory and which is non-trivial for some $4$-manifolds admitting positive scalar curvature metrics.

The non-vanishing of the cohomological invariant yields a concrete geometric result connected with the adjunction inequality.
We will prove in fact not only that $\SWcoh(X, \fraks) \neq 0$ but also that the evaluation $\lb \SWcoh(X, \fraks), \ \cdot\ \rb : H_{\ast}(\calK(X, \fraks) ;\Z) \to \Z$ is non-zero for some $(X, \fraks)$.
We therefore also obtain a non-trivial homology class in $H_{\ast}(\calK(X, \fraks) ;\Z)$;
we can concretely give such a class.
We will see that each non-trivial homology class in $H_{\ast}(\calK(X, \fraks) ;\Z)$ provides some adjunction-type genus constraints on configurations: the adjunction inequality for some embedded surface under a certain condition on geometric intersections with other embedded surfaces.
Such constraints on configurations will be  obtained for $4$-manifolds whose \SW and Bauer--Furuta invariants vanish as explained above.
This point is similar to Strle~\cite{MR2064429} and the author's result~\cite{Konno}.
On the other hand, we will also show that {\it one} non-trivial homology class in $H_{\ast}(\calK(X, \fraks) ;\Z)$ provides constraints on {\it infinitely many} configurations of embedded surfaces.
This is a phenomenon newly found through our use of the simplicial homology theory.

We show this non-vanishing result using the following two key ingredients:
the first one is a combination of wall-crossing and gluing technique originally due to Ruberman~\cite{MR1671187, MR1734421, MR1874146}, and the second is a description of higher-dimensional wall-crossing phenomena in terms of embedded surfaces given in \cite{Konno} by the author.
Combining these two tools, we can obtain a solution to the \SW equations used to show the adjunction inequality on a $4$-manifold whose \SW invariant vanishes.

We finish off this introduction with an outline of the contents of this paper.
\Cref{section: Family of Riemannian metrics,section: Construction of the cohomological invariant} are devoted to constructing the invariant $\SWcoh$.
In \cref{section: Family of Riemannian metrics} we construct certain families of Riemannian metrics on a given $4$-manifold obtained by stretching neighborhoods of embedded surfaces.
In \cref{section: Construction of the cohomological invariant} we consider the moduli spaces of the \SW equations parameterized by the families of Riemannian metrics constructed in \cref{section: Family of Riemannian metrics}, and define $\SWcoh$ by counting the points of the parameterized moduli spaces.
In \cref{section: Non-vanishing results for the cohomological invariant} we prove that the invariants $\SWcoh(X, \fraks)$ are non-trivial for some $\spc$ $4$-manifolds $(X, \fraks)$ as described above, and give a geometric application of this non-vanishing result.
In particular, in \cref{subsection: Application to bounds on the complexity of configurations of surfaces} we give bounds on the complexity of configurations of surfaces using this non-vanishing, along an idea of Strle~\cite{MR2064429}.

\begin{acknowledgement}
The author would like to express his deep gratitude to Mikio Furuta for helpful suggestions, in particular the definition of the complex of surfaces, and for continued encouragement during this work.
The author would also like to express his appreciation to Nobuhiro Nakamura for telling him Ruberman's work on invariants of diffeomorphisms using $1$-parameter family.
The author also wishes to thank Kouichi Yasui for comment on \cref{ex: wall-crossing collection of surfaces}.
A note in \cref{rem: comment on blowup formula} on the blowup formula is due to him.
The author also thanks Genki Sato for a discussion on the simplicial complex theory.
Finally, the author would like to express his deep gratitude to an anonymous referee for many valuable comments and suggestions.
The author was supported by JSPS KAKENHI Grant Numbers 16J05569, 17H06461, 19K23412, 21K13785, and the Program for Leading Graduate Schools, MEXT, Japan.
\end{acknowledgement}

\section{Construction of the invariant I: Family of Riemannian metrics}

\label{section: Family of Riemannian metrics}

In this section we construct certain families of Riemannian metrics on a given $4$-manifold obtained by stretching neighborhoods of embedded surfaces and study some properties of these families.
These families will provide foundations of the construction of our cohomological invariant.

\subsection{Notation on simplicial complexes}

\label{subsection: Notation on simplicial complexes}

We first summarize some notation and convention on simplicial complexes.
Let $K$ be a simplicial complex.

\begin{itemize}
\item For $n \geq 0$, we will write $\Delta^n$ for the standard $n$-simplex that is defined by
\[
\Delta^n = \Conv\Set{e_0, \ldots, e_n}.
\]
Here $\{e_0, \ldots, e_n \} = \{{}^t(1,0, \ldots, 0), \ldots, {}^t(0, \ldots,0, 1)\}$  is the standard basis of $\R^{n+1}$, and $\Conv\{A\}$ is the convex hull of a subset $A \subset \R^{n+1}$.
\item We will denote by $V(K)$ the set of vertices of $K$.
\item We will denote by $S(K)$ the set of simplices of $K$ and by $S_n(K)$ the set of $n$-simplices of $K$. 
\item If $v_0, \ldots, v_n \in V(K)$ span an $n$-simplex $\si \in S(K)$, we regard $\si$ as the set $\si = \{v_0, \ldots, v_n\}$.
If $v$ is a vertex of $\si$, we will write $v \in \si$.
\item For an $n$-simplex $\si = \{v_0, \ldots, v_n\}$, we will denote by $\lb \si \rb = \lb v_0, \ldots, v_n \rb$ an oriented simplex.
The simplex with the opposite orientation is denoted by $-\lb \si \rb$.
\item If $\tau$ is a face of $\si$, we will write $\tau \prec \si$.
\item We will denote by $|K| = \bigcup_{\si \in S(K)} |\si|$ the geometric realization of $K$.
\item For a simplex $\si \in S(K)$, we can define a simplicial complex $\scrK(\si)$ by
\[
S(\scrK(\si)) = \Set{ \tau | \tau \prec \si }.
\]
We will also simply write this simplicial complex $\si$ when no confusion can arise.
The geometric realization of $\scrK(\si)$ can be identified with $|\si| \subset |K|$.
\item We will denote by $\Bd(K)$ the barycentric subdivision of $K$.
An $n$-simplex of $\Bd(K)$ is given by a set $\{ \si_0, \ldots, \si_n \}$, where $\si_0 \precneqq \cdots \precneqq \si_n$ is an increasing sequence in $S(K)$.
We identify $|\Bd(K)|$ with $|K|$ as usual.
For a simplex  $s = \{ \si_0, \ldots, \si_n \} \in S(\Bd(K))$ with $\si_0 \precneqq \cdots \precneqq \si_n$, set $\simax{s} := \si_n$ and $\simin{s} := \si_0$.
If readers are not familiar with this combinatorial description of the barycentric subdivision, 
see \cite{MR755006} to connect this with a geometric description.
\item We will denote by
\[
C_\ast(K) = C_\ast(K ; \Z),\quad C^\ast(K) = C^\ast(K ; \Z)
\]
the simplicial chain complex and the cochain complex with integer coefficient respectively.
We will denote by
\[
\del : C_\ast(K) \to C_{\ast-1}(K),\quad \codel : C^\ast(K) \to C^{\ast+1}(K)
\]
the boundary operator and the coboundary operator respectively.
\end{itemize}

\subsection{Outline of the construction of the cohomological invariant}

\label{subsection: Outline of the construction of the cohomological invariant}

Before starting technical details of the construction of the cohomological invariant, in this subsection we shall give a sketch of the whole construction and explain technical issues which will be contended with in subsequent subsections.

Let $(X, \fraks)$ be a smooth oriented closed $\spc$ $4$-manifold with a homology orientation.
Let $n$ be an integer defined by $d(\fraks) = -(n+1)$, where $d(\fraks)$ denotes the formal dimension of the (unparameterized) moduli space of the \SW equations.
Under the assumptions that
$b^+(X) \geq n+3$ and that $n \geq 0$, using families of \SW equations, we shall construct a cohomology class depending only on $(X, \fraks)$, which we call the {\it cohomological Seiberg--Witten invariant}:
\[
\SWcoh(X, \fraks) \in H^n(\calK(X, \fraks) ;\Z).
\]

Our construction of $\SWcoh(X, \fraks)$ consists of the following three steps:
\begin{description}
\item[Step~1] Construct a cochain on $\calK = \calK(X, \fraks)$ using a non-topological data, such as a metric.
\item[Step~2] Show that the cochain is cocycle.
\item[Step~3] Show that the cohomology class of the cocycle is independent of the choice made in the first step.
\end{description}
Let us explain these three steps in more detail.
In this explanation, we assume that $n = 1$ for simplicity.
The first step is to construct a cochain 
\[
\SWcoch(X, \fraks, \calA) \in C^1(\calK),
\]
where $\calA$ is a certain additional data:
this cochain depends on not only $(X, \fraks)$ but also other various choices.
The first choice used to construct $\SWcoch(X, \fraks, \calA)$ is a Riemannian metric $g$ on $X$.
To make a $1$-cochain, we have to give an integer corresponding to  an oriented $1$-simplex $\lb \si \rb = \lb \Si_0, \Si_1 \rb$.
Since $\Si_0$ and $\Si_1$ are disjoint, we can stretch some neighborhoods of $\Si_0$ and $\Si_1$ independently from the metric $g$.
Thus we obtain a $2$-parameter family of metrics.
Strictly speaking, we have to modify $g$ to be cylindrical near the sphere bundles of the normal bundles of $\Si_0$ and $\Si_1$ to stretch their neighborhoods.
This will cause the main technical difficulty, but, for the moment, we describe the whole construction as if we could forget the problem.
Since $\Si_0$ and $\Si_1$ violate the adjunction inequalities, we can show that the moduli space of the \SW equations is empty for such a stretched metric by using a quantitative version of Kronheimer--Mrowka~\cite{MR1306022}'s argument.
We describe the $2$-parameter family of metrics in \cref{figure : 2-parameter family obtained by stretching}.
Here $\Si_i$-direction means the stretching parameter for $\Si_i$.
The coordinate $(R_0, R_1)$ means the metric obtained by stretching the neighborhood of $\Si_i$  by the length $R_i$ $(i = 0, 1)$ from the initial metric $g$.  
For sufficiently large $R>0$, any metric on the codimension $1$ face $F$ of the ``$2$-simplex" in \cref{figure : 2-parameter family obtained by stretching} is stretched for at least one of the neighborhood of $\Si_0$ and one of $\Si_1$.
Therefore the moduli space on the face $F$ is empty.
Let us perturb the \SW equations on the ``$2$-simplex" except for the face $A$, and consider the moduli space of the perturbed \SW equations parameterized by the $2$-simplex.
Then, on any face of codimension $\geq1$, the parameterized moduli space is empty since we have assumed that $d(\fraks) = -2$.
We can count the points of the parameterized moduli space on the $2$-simplex and obtain an integer $\SWinv(\lb \si \rb)$.
(Here we have to use the given homology orientation and the orientation of the simplex $\lb \si \rb$.)
Therefore we obtain the $1$-cochain
\[
\SWcoch(X, \fraks, \calA) : C_1(\calK) \to \Z
\]
defined by
\[
 \lb \si \rb \mapsto \SWinv(\lb \si \rb).
\]
\begin{figure}
\begin{center}
\begin{tikzpicture}
[xscale = 0.7, yscale = 0.7]
\draw [->] [thick](0, 0) -- (3, 0);
\draw [->] [thick](0, 0) -- (0, 3);
\draw [ultra thick] (2.5, 0) -- (0, 2.5);
\draw(0,3.5) node {$\Si_1$-direction};
\draw(4.6,0) node {$\Si_0$-direction};
\draw(-0.3,2.5) node {$R$};
\draw(2.5,-0.3) node {$R$};
\draw(-0.3,-0.3) node {$g$};
\draw(1.9,1.3) node {$A$};
\fill(0,0) circle (2.8pt);
\fill [gray, opacity=.3] (0, 0) -- (2.5,0) -- (0,2.5) -- (0, 0);
\end{tikzpicture}
\end{center}
\caption{$2$-parameter family obtained from stretchings}
\label{figure : 2-parameter family obtained by stretching}
\end{figure}

The second step is to show that the cochain $\SWcoch(X, \fraks, \calA)$ is cocycle.
To do this, let $\lb \si \rb = \lb \Si_0 , \Si_1, \Si_2 \rb$ be a $2$-simplex of $\calK$.
We will see that $\SWcoch(X, \fraks, \calA)(\del \lb \si \rb) = 0$.
Write 
\begin{align}
\lb \tau_0 \rb = \lb \Si_{1}, \Si_{2} \rb,\ \lb \tau_1 \rb = \lb \Si_{0}, \Si_{2} \rb,\ \lb \si_2 \rb = \lb \Si_{0}, \Si_{1} \rb,
\label{eq : 1-simplices come from boundary of a 2-simplex}
\end{align}
then we have
$\del \lb \si \rb = \sum_{i = 0}^{2} (-1)^i \lb \tau_i \rb$.
We describe the $3$-parameter family of metrics in \cref{figure : basic argument to show SWcoch is cocycle} obtained from the stretching neighborhoods of $\Si_0$, $\Si_1$, and $\Si_2$.
Let $\bvec{v}_0$, $\bvec{v}_1$, and $\bvec{v}_2$ denote the coordinates $(R, 0, 0)$, $(0, R, 0)$, and $(0, 0, R)$ in \cref{figure : basic argument to show SWcoch is cocycle} respectively.
The integer $\SWcoch(X, \fraks, \calA)(\del \lb \si \rb) = \sum_{i = 0}^{2} (-1)^i \SWinv(\lb \tau_i \rb)$ is the sum of the counting the points of the moduli space of  the perturbed equations on codimension 1 faces of the ``$3$-simplex "given by
\[
\{g, \bvec{v}_0, \bvec{v}_1\},\ \{g, \bvec{v}_1, \bvec{v}_2\},\ \{g, \bvec{v}_2, \bvec{v}_0\}
\]
in \cref{figure : basic argument to show SWcoch is cocycle}.
Here we denote the face spanned by $\{v, v', v''\}$ briefly by $\{v, v', v''\}$, and we identify $g$ with the coordinate $(0,0,0)$.
Any two of these faces meet along a $1$-simplex, and thus these faces form a $2$-parameter family of metrics homeomorphic to $D^2$.
This 2-parameter family can be continuously deformed to the face $\{\bvec{v}_0, \bvec{v}_1, \bvec{v}_2\}$ fixing the all $1$-simplices corresponding to stretched metrics: 
\[
\{\bvec{v}_0, \bvec{v}_1\},\ \{\bvec{v}_1, \bvec{v}_2\},\ \{\bvec{v}_2, \bvec{v}_0\}.
\]
Since we have assumed that $b^+(X) \geq 3 + 1=4$, in this deformation of $2$-parameter families along a $3$-parameter family, we may assume that there is no reducible.
Note that, on the face $\{\bvec{v}_0, \bvec{v}_1, \bvec{v}_2\}$, all metrics are stretched for at least one of $\Si_0$, $\Si_1$, and $\Si_2$.
Thus the parameterized moduli space on this face is empty.
Therefore we have $\SWcoch(X, \fraks, \calA)(\del \lb \si \rb) = 0$ using the argument by cobordisms.
\begin{figure}
\begin{center}
\begin{tikzpicture}
[xscale = 0.7, yscale = 0.7]
\draw [->] [thick](0, 0) -- (3, 0);
\draw [->] [thick](0, 0) -- (0, 3);
\draw [->] [thick](0, 0) -- (-1.5, -1.5);
\draw [ultra thick] (2.5, 0) -- (0, 2.5);
\draw [ultra thick] (-1.2, -1.2) -- (0, 2.5);
\draw [ultra thick] (2.5, 0) -- (-1.2, -1.2);
\draw(4.6,0) node {$\Si_0$-direction};
\draw(0,3.5) node {$\Si_1$-direction};
\draw(-1.4,-2) node {$\Si_2$-direction};
\draw(-0.3,2.5) node {$R$};
\draw(2.5,-0.3) node {$R$};
\draw(-1.5,-1) node {$R$};
\draw(-0.3,0) node {$g$};
\fill(0,0) circle (2.8pt);
\fill [gray, opacity=.2] (2.5,0) -- (0,2.5) -- (0, 0);
\fill [gray, opacity=.4] (0,0) -- (0,2.5) -- (-1.2, -1.2);
\fill [gray, opacity=.3] (2.5,0) -- (0,0) -- (-1.2, -1.2);
\end{tikzpicture}
\end{center}
\caption{Argument to show that $\SWcoch(X, \fraks, \calA)$ is a cocycle}
\label{figure : basic argument to show SWcoch is cocycle}
\end{figure}

The third step is to show that the all ambiguities arising from choices used to construct the cocycle $\SWcoch(X, \fraks, \calA)$ are absorbed into coboundaries.
From this, we can obtain the well-defined cohomology class $\SWcoh(X, \fraks) := [\SWcoch(X, \fraks, \calA)]$.
To do this, let us take two choices $\calA_{0}$ and $\calA_{1}$ (for example, metrics $g_0$ and $g_1$) and make two cocycles $\SWcoch[0] := \SWcoch(X, \fraks, \calA_{0})$ and $\SWcoch[1] := \SWcoch(X, \fraks, \calA_{1})$ using these two choices respectively.
By connecting $g_0$ with $g_1$ by a path in the space of metrics, we obtain the family described in \cref{figure : the definitions of SWcochzo}.
(Strictly speaking, to define the path between $R_0$ and $R_1$ in suitable sense, we need a quantitative version of an argument in Kronheimer--Mrowka~\cite{MR1306022}.)
We define a $0$-cochain $\SWcoch[{{0,1}}] \in C^0(\calK)$ by
\[
\lb \Si \rb \mapsto \SWinv(\lb \Si \rb),
\]
where $\SWinv(\lb \Si \rb) \in \Z$ is the counting on the square in \cref{figure : the definitions of SWcochzo} under suitable orientation.
Let us take a $1$-simplex $\lb \si \rb = \lb \Si_0, \Si_1 \rb$.
For this $1$-simplex, we obtain the family described in \cref{figure : basic argument to consider the ambiguities}.
Let $\bvec{v}_0^i$ and $\bvec{v}_1^i$ denote $(R_i, 0)$ and $(0, R_i)$ in \cref{figure : basic argument to consider the ambiguities} respectively.
The integer $\SWcoch[0](\lb \si \rb) - \SWcoch[1](\lb \si \rb)$ is the counting on the two triangles $\{g_0, \bvec{v}_0^0, \bvec{v}_1^0\}$ and $\{g_0, \bvec{v}_0^1, \bvec{v}_1^1\}$ under suitable signs.
We can see that the moduli space on the square $\{\bvec{v}_0^0, \bvec{v}_1^0, \bvec{v}_0^1, \bvec{v}_1^1\}$ is empty using a quantitative version of Kronheimer--Mrowka's argument again.
Hence we deduce that the total sum of the counting on two triangles $\{g_0, \bvec{v}_0^0, \bvec{v}_1^0\}$, $\{g_0, \bvec{v}_0^1, \bvec{v}_1^1\}$ and two squares $\{g_0, g_1, \bvec{v}_0^1, \bvec{v}_0^0\}$, $\{g_0, g_1, \bvec{v}_1^1, \bvec{v}_1^0\}$ is zero using the argument by cobordisms.
Namely, we have $\SWcoch[0](\lb \si \rb) - \SWcoch[1](\lb \si \rb) = \pm  \delta\SWcoch[{{0,1}}](\lb \si \rb)$, since
\[
 \SWinv(\lb \Si_1 \rb) - \SWinv(\lb \Si_0 \rb)=  \SWcoch[{{0,1}}](\lb \Si_1 \rb - \lb \Si_0 \rb) = \SWcoch[{{0,1}}](\del \lb \si \rb) = \delta\SWcoch[{{0,1}}](\lb \si \rb).
\]
(Here $\pm$ means the suitable sign.)
Thus we obtain $\SWcoch[0] - \SWcoch[1] = \pm  \delta\SWcoch[{{0,1}}]$.
\begin{figure}
\begin{center}
\begin{tikzpicture}
[xscale = 0.7, yscale = 0.7]
\draw [->] [thick](0, 0) -- (3, 0);
\draw [->] [thick](2, 2) -- (5, 2);
\draw [thick](0, 0) -- (2, 2);
\draw [ultra thick](2.5, 0) -- (4.5, 2);
\draw(4.6, -0.3) node {$\Si$-direction};
\draw(6.6, 2) node {$\Si$-direction};
\draw(2.5, -0.4) node {$R_0$};
\draw(4.5, 2.3) node {$R_1$};
\draw(-0.3, -0.3) node {$g_0$};
\draw(2.4, 2.4) node {$g_1$};
\fill(0,0) circle (2.8pt);
\fill(2,2) circle (2.8pt);
\fill [gray, opacity=.3] (0, 0) -- (2.5,0) -- (4.5, 2) -- (2, 2);
\end{tikzpicture}
\end{center}
\caption{The definition of $\SWcoch[{{0,1}}]$}
\label{figure : the definitions of SWcochzo}
\end{figure}

\begin{figure}
\begin{center}
\begin{tikzpicture}
[xscale = 0.7, yscale = 0.7]
\draw [->] [thick](0, 0) -- (3, 0);
\draw [->] [thick](0, 0) -- (0, 3);
\draw [ultra thick] (2.5, 0) -- (0, 2.5);
\draw [->] [thick](4.5, 2) -- (5, 2);
\draw [->] [thick](2, 4.5) -- (2, 5);
\draw [thick, dashed](4.5, 2) -- (2, 2);
\draw [thick, dashed](2, 4.5) -- (2, 2);
\draw [ultra thick] (4.5, 2) -- (2, 4.5);
\draw [thick, dashed](0, 0) -- (2, 2);
\draw [ultra thick](2.5, 0) -- (4.5, 2);
\draw [ultra thick](0, 2.5) -- (2, 4.5);
\draw(4.6, -0.3) node {$\Si_0$-direction};
\draw(-1.5, 3.5) node {$\Si_1$-direction};
\draw(6.6, 1.8) node {$\Si_0$-direction};
\draw(1.7, 5.4) node {$\Si_1$-direction};
\draw(-0.4, 2.5) node {$R_0$};
\draw(2.5, -0.4) node {$R_0$};
\draw(1.5, 4.6) node {$R_1$};
\draw(4.5, 1.5) node {$R_1$};
\draw(-0.3, -0.3) node {$g_0$};
\draw(2.4, 2.4) node {$g_1$};
\fill(0,0) circle (2.8pt);
\fill(2,2) circle (2.8pt);
\fill [gray, opacity=.3/2] (4.5, 2) -- (2.5,0) -- (0,2.5) -- (2, 4.5);
\fill [gray, opacity=.1] (0, 0) -- (2.5,0) -- (0,2.5) -- (0, 0);
\fill [gray, opacity=.3/2] (0, 0) -- (2,2) -- (2,4.5) -- (0, 2.5);
\fill [gray, opacity=.3/2] (0, 0) -- (2.5,0) -- (4.5,2) -- (2, 2);
\fill [gray, opacity=.3/2] (2, 2) -- (4.5,2) -- (2,4.5) -- (2, 2);
\end{tikzpicture}
\end{center}
\caption{Argument to absorb the ambiguities}
\label{figure : basic argument to consider the ambiguities}
\end{figure}

These three steps enable us to define the cohomology class $\SWcoh(X, \fraks)$.
However, there are, of course, a number of details to justify the above argument.
The main technical issue is caused since the cardinality of the set of vertices of $\calK$ is infinite.
We here explain this issue for the motivation of the subsequent subsections.
In the first step, we have to stretch some neighborhoods of $\Si_0$ and $\Si_1$ independently for a $1$-simplex $\lb \si \rb = \lb \Si_0, \Si_1 \rb$.
Therefore we have to introduce a metric obtained by gluing the initial metric $g$ with some cylindrical metrics.
Let $N(\si; \Si_i)$ denote a neighborhood of $\Si_i$ with $N(\si; \Si_0) \cap N(\si; \Si_1) = \emptyset$, and let $g(\si)$ denote a new metric made from $g$ and having a cylindrical metric near the boundary of $N(\si; \Si_i)$.
Then we can do the argument in the first step under a small modification.
However, to do the second step, we have to consider a $2$-simplex $\lb \tau \rb = \lb \Si_0 , \Si_1, \Si_2 \rb$ and stretch neighborhoods of $\Si_0$, $\Si_1$, and $\Si_2$ independently.
For $1$-simplices $\si_j$ $(j = 0, 1, 2)$ given in \eqref{eq : 1-simplices come from boundary of a 2-simplex}, we have already fixed the neighborhood $N(\si_j; \Si_i)$ for each $\Si_i \in \si_j$.
However, for the independent stretching for these three surfaces, we need disjoint neighborhoods of the surfaces, and we may have to take a smaller neighborhood $N(\tau; \Si_i)$ in $N(\si_j; \Si_i)$ and define the metric $g(\tau)$ using these new neighborhoods.
The stretching argument in the first step is done using $g(\si)$ as the initial metric, however the argument in the second step should be done using $g(\tau)$.
We have to mediate between these two arguments, and this (and its higher-dimendional version) will be the main part of the rest part of \cref{section: Family of Riemannian metrics} and \cref{section: Construction of the cohomological invariant}.

Note that, if we only consider a finite subcomplex $K$ of $\calK$, then we can choose a sufficiently small neighborhood $N(K; \Si)$ for each surface $\Si \in V(\calK)$ satisfying that $N(K; \Si) \cap N(K; \Si') = \emptyset$ if $\Si \cap \Si' = \emptyset$.
However, for a vertex $\Si \in V(\calK)$, one can make a sequence $\Si', \Si'', \ldots \in V(\calK)$ closing to $\Si$ obtained as parallel copies (in the sense of the following \cref{rem: parallel copy}) of $\Si$.
Therefore we cannot take a neighborhood $N(\calK; \Si)$ depending only on each $\Si \in V(\calK)$ satisfying that $N(\calK; \Si) \cap N(\calK; \Si') = \emptyset$ if $\Si \cap \Si' = \emptyset$.

\begin{rem}
For an embedded surface $\Si \subset X$ with self-intersection number zero, by pushing $\Si$ in the fiber direction of its normal bundle, we obtain an embedded surface whose homology class coincides with $\Si$'s and which has no geometric intersections with $\Si$.
We call such a surface a {\it parallel copy} of $\Si$.
\label{rem: parallel copy}
\end{rem}

\subsection{Construction of a family of Riemannian metrics}

\label{subsection: Construction of a family of Riemannian metrics}

In this subsection we construct a family of Riemannian metrics on a $4$-manifold by stretching neighborhoods of embedded surfaces.
(We consider the simplicial complex $\calK$ and construct a family of Riemannian metrics using $\calK$ in this section in order to construct a cohomology class on $\calK$ in \cref{section: Construction of the cohomological invariant}.
However, for any subcomplex of $\bar{\calK}$, one can also consider a similar construction of a family of Riemannian metrics using it.)

Let $(X, \fraks)$ be a smooth oriented closed $\spc$ $4$-manifold.
We denote by $\Met(X)$ the space of Riemannian metrics.
Henceforth in this subsection we fix a metric $g \in \Met(X)$.
We also fix functions $N(\cdot)$ and $a(\cdot)$ as follows.
First, for each $\Si \in V(\calK)$, we fix a tubular neighborhood $N(\Si)$ of $\Si$ equipped with
a diffeomorphism $f_{\Si} : N(\Si) \to D^2 \times \Si$ with $f_{\Si}|_{\Si} = \iota_{\Si}$, where $D^2$ is the unit disk in $\C$ and $\iota_{\Si} : \Si = \{0\} \times \Si \inc D^{2} \times \Si$ is the inclusion.
We will denote by $\tilde{f}_{\Si} : N(\Si) \setminus \Si \to (0, 1] \times S^1 \times \Si$ the diffeomorphism defined as the composition of the restriction  of $f_{\Si}$ and the diffeomorphism $D^2 \setminus \{0\} \times \Si \cong (0, 1] \times S^{1} \times \Si$ obtained from the canonical identification $D^2 \setminus \{0\} \cong (0, 1] \times S^{1}$.
We next take a function $a : S(\calK) \to (0, 1]$ satisfying $a(\Si) = a(\{\Si\}) = 1$ for each $\Si \in V(\calK)$ and enjoying the following two conditions (a) and (b) for any $\si \in S(\calK)$:
\begin{cond}
$\quad$
\begin{description}
\item[(a)] For any (strictly small) face $\tau \precneqq \si$, the inequalities
\[
0 < a(\si) \leq  a(\tau)
\]
hold.
\item[(b)] The equality $U(\si,\Si) \cap U(\si,\Si') = \emptyset$ holds for vertices $\Si, \Si' \in \si$ with $\Si \neq \Si'$.
Here $U(\si, \Si)$ is the subset of $N(\Si)$ defined as
\[
U(\si, \Si) \cong \left(0, 6a(\si)\right] \times S^1 \times \Si
\]
via the identification obtained from $\tilde{f}_{\Si}$.
\end{description}
\label{cond: condition for a()}
\end{cond}
The existence of such a function $a(\cdot)$ easily follows from induction on the dimension of $\si \in S(\calK)$.

We fix our convention on cut-off functions as follows.
Let us consider the concrete monotonically increasing function $\rho_0 : [0,1] \to [0,1]$ defined as
\[
\rho_0(t) := \frac{\bar{\rho}_0(t)}{\bar{\rho}_0(t) + \bar{\rho}_0(1-t)},\ \bar{\rho}_0 : [0,1] \to [0,1],\ \bar{\rho}_0(t) = e^{-1/t}\ {\rm if}\ t > 0,\ \bar{\rho}_0(0) = 0.
\]

\begin{convention}
Let $a, b, a', b'$ be real numbers with $a<a'<b'<b$.
In this paper, the {\it cut-off  function $\rho$ on $([a, b], [a', b'])$} is the following function:
The function $\rho$ coincides with the constant function $1$ on $[a', b']$, with the pull-back of $\rho_0(t)$ by the diffeomorphism
\[
[0,1] \cong [a, a']\ ; \ t \leftrightarrow (a'-a)t+a
\]
on $[a, a']$, and with the pull-back of $\rho_0(1-t)$ by the diffeomorphism 
\[
[0,1] \cong [b', b]\ ;\  t \leftrightarrow (b-b')t+b'
\]
on $[b', b]$ respectively.
We also define the {\it cut-off function on $([a, b], [a', b']) \times S^1 \times \Si$} for some $\Si \in V(\calK)$ as the pull-back of the cut-off function defined above on $([a, b], [a', b'])$ by the projection
 $[a,b] \times S^1 \times \Si \to [a,b]$.
\end{convention}

We next define a metric $G(\cdot\ ,\ \cdot)$ using the positive numbers $a(\cdot)$ and the fixed metric $g$.
Let $\si \in S(\calK)$ be a simplex and $\lambda \in \R$ be a real number with $0 < \lambda \leq a(\si)$.
We define the metric $G(\si, \lambda)$ by gluing the metric $g$ with the product metric on
\begin{align}
\bigsqcup_{\Si \in \si} \left[ \lambda, 6\lambda \right] \times S^1 \times \Si,
\label{eq: most large disjoint set}
\end{align}
where we use the cut-off function on $(\left[ \lambda, 6\lambda \right], \left[ 2\lambda, 5\lambda \right]) \times S^1 \times \Si$ for each $\Si \in \si$ via the identifications obtained from $\{\tilde{f}_{\Si}\}_{\Si \in \si}$ in order to glue them.
Here we equip $\Si$ with the metric $g_{\Si}$ of constant scalar curvature and of unit area, and equip $S^1$ with the metric $d\theta^2$ of unit length in \eqref{eq: most large disjoint set}.
Note that $\{\tilde{f}_{\Si}\}_{\Si \in \si}$ gives an identification between mutually disjoint $(\dim{\si}+1)$-subsets of $X$ and \eqref{eq: most large disjoint set} since $\lambda \leq a(\si)$.
Let us define
\begin{align*}
\bar{X}(\lambda, \Si) := \left[ 2\lambda, 5\lambda \right] \times S^1 \times \Si
\subset N(\Si) \subset X
\end{align*}
via the identification obtained from $\tilde{f}_{\Si}$.

We next define a ``stretched" metric.
Let $\si, \tau \in S(\calK)$ be simplices with $\tau \prec \si$,
$\lambda \in \R$ be a real number with $0 < \lambda \leq a(\si)$,
and  $\{r_{\Si}\}_{\Si \in \tau} \in [0, \infty)^{\tau} = \prod_{\Si \in \tau} [0,\infty)$ be a family of non-negative numbers.
We define $G(\si, \lambda, \{r_{\Si}\}_{\Si \in \tau} ) \in \Met(X)$ as follows.
For $\Si \in \tau$, the restricted metric $G(\si, \lambda)|_{\bar{X}(\lambda, \Si)}$ can be expressed as
\begin{align}
G(\si, \lambda)|_{\bar{X}(\lambda, \Si)} = dt^2 + d\theta^2 + g_{\Si},
\label{eq: initial metric on cylinder}
\end{align}
where $dt^2$ is the standard metric on the interval $\left[2\lambda, 5\lambda \right]$.
Let 
\[
\rho_{\lambda} : \left[ 2\lambda, 5\lambda \right] \to [0,1]
\]
be the cut-off function on $(\left[ 2\lambda, 5\lambda \right], \left[ 3\lambda, 4\lambda \right])$.
We modify the metric $G(\si, \lambda)$
by replacing \eqref{eq: initial metric on cylinder} with
\[
(r_{\Si} \cdot \rho_{\lambda}(t) + 1) dt^2 + d\theta^2 + g_{\Si}
\]
on $\bar{X}(\lambda, \Si)$ for every $\Si \in \tau$; we write
\[
G(\si, \lambda, \{r_{\Si}\}_{\Si \in \tau} )
\]
for the modified metric.
We set
\begin{align*}
X(\lambda, \Si) := \left[ 3\lambda, 4\lambda \right] \times S^1 \times \Si \subset N(\Si) \subset X
\end{align*}
via the identification obtained from $\tilde{f}_{\Si}$.
We define the restricted Riemannian manifold $\Cy(\lambda, \Si, r_{\Si})$ by
\begin{align}
\Cy(\lambda, \Si, r_{\Si}) &:= \left(X(\lambda, \Si), G(\si, \lambda, \{r_{\Si}\}_{\Si \in \tau}) \right) \label{eq: definition of cylindrical part in X}\\
&\subset (X, G(\si, \lambda, \{r_{\Si}\}_{\Si \in \tau})). \nonumber
\end{align}
(Note that although $\si$ and $\tau$ appear in the right-hand side of \eqref{eq: definition of cylindrical part in X}, $\Cy(\lambda, \Si, r_{\Si})$ is independent of $\si$ and $\tau$ with $\Si \in \tau \prec \si$.)
Then the Riemannian manifold $\Cy(\lambda, \Si, r_{\Si})$ is isometric to the cylinder with the standard metric
\begin{align*}
\Cystd(\lambda, \Si, r_{\Si})
:= \left( \left[0, \lambda (r_{\Si} + 1) \right] \times S^1 \times \Si,\ dt^2 + d\theta^2 + g_{\Si}\right).
\end{align*}
We also set
\[
\barCy(\lambda, \Si, r_{\Si}) := (\bar{X}(\lambda, \Si), G(\si, \lambda, \{r_{\Si}\}_{\Si \in \tau})).
\]
Then there exists a positive number $\Lambda(\lambda, r_{\Si}) > \lambda (r_{\Si} + 1)$ such that $\barCy(\lambda, \Si, r_{\Si})$ is isometric to the cylinder
\[
\barCystd(\lambda, \Si, r_{\Si})
:= \left( \left[0, \Lambda(\lambda, r_{\Si}) \right] \times S^1 \times \Si,\ dt^2 + d\theta^2 + g_{\Si}\right).
\]
The number $\Lambda(\lambda, r_{\Si})$ depends only on $\lambda$ and $r_{\Si}$ since we fix convention on cut-off functions.

Let $n \geq 0$.
Henceforth we fix an $n$-simplex $\si \in S_{n}(\calK)$ in this subsection.
We consider simplices of $\Bd(\si)$, where we regard $\si$ as the simplicial complex $\scrK(\si)$ explained in \cref{subsection: Notation on simplicial complexes}.
We shall construct a continuous map
\begin{align}
\phi_{\si} : \bigcup_{s \in S(\Bd(\si))} |s| \times [0, \infty)^{\simin{s}} \to \Met(X).
\label{eq: map for total stretching}
\end{align}
Here the domain of $\phi_{\si}$ is a subset of 
\[
\bigcup_{s \in S(\Bd(\si))} |s| \times [0, \infty)^{\si}
= |\Bd(\si)| \times [0, \infty)^{\si}
= |\si| \times [0, \infty)^{\si}.
\]
The family of Riemannian metrics \eqref{eq: map for total stretching} is what we aim to construct in this subsection.
To construct the map \eqref{eq: map for total stretching}, for each simplex $s \in S(\Bd(\si))$, we will construct 
\begin{align}
\phi_{\si, s} :  |s| \times [0, \infty)^{\simin{s}} \to \Met(X)
\label{eq: map for partial stretching}
\end{align}
so that the restriction of $\phi_{\si, s}$ to $\dom(\phi_{\si, s}) \cap \dom(\phi_{\si, s'})$ coincides with that of  $\phi_{\si, s'}$ for another simplex $s' \in S(\Bd(\si))$.
Here we denote by $\dom(\phi_{\si, s})$ the domain of $\phi_{\si, s}$.
Then we can obtain \eqref{eq: map for total stretching} by gluing $\phi_{\si, s}$'s  together.
Namely we define the map \eqref{eq: map for total stretching} by
\begin{align}
\phi_{\si} = \bigcup_{s \in S(\Bd(\si))} \phi_{\si, s} :  \bigcup_{s \in S(\Bd(\si))} |s| \times [0, \infty)^{\simin{s}} \to \Met(X).
\label{eq: map for total stretching, definition}
\end{align}
To construct the map \eqref{eq: map for partial stretching}, let us consider simplices of $\Bd(s)$, where we regard $s$ as the simplicial complex $\scrK(s)$ explained in \cref{subsection: Notation on simplicial complexes}.
For each simplex $\scrS \in S(\Bd(s))$, we will construct 
\begin{align}
\phi_{\si, s, \scrS} :  |\scrS| \times [0, \infty)^{\simin{s}} \to \Met(X)
\label{eq: map for partial partial stretching}
\end{align}
so that the restriction of $\phi_{\si, s, \scrS}$ to $\dom(\phi_{\si, s, \scrS}) \cap \dom(\phi_{\si, s, \scrS'})$ coincides with that of  $\phi_{\si, s, \scrS'}$ for another simplex $\scrS' \in S(\Bd(s))$.
We will define \eqref{eq: map for partial stretching} by gluing $\phi_{\si, s, \scrS}$'s  together.

The map \eqref{eq: map for partial partial stretching} is constructed as follows.
Let $\scrS$ be a simplex of $\Bd(s)$.
We can write $\scrS = \Set{s_{0}, \ldots, s_{l}}$ with $s_{0} \precneqq \cdots \precneqq s_{l} \prec s$ for some $l \geq 0$.
Recall that the geometric realization  $|\scrS| = \Conv\{s_{0}, \ldots, s_{l}\}$ is a subset of the real vector space $\R^{\{s_{0}, \ldots, s_{l}\}}$ generated by $s_{0}, \ldots, s_{l}$.
Let $\bvec{t} = \{t_i\}_{i = 0}^l$ be a collection of non-negative numbers with $\sum_{i=0}^l t_i = 1$.
We set
\[
\lambda(\scrS, \bvec{t}) := \sum_{j=0}^l t_j a(\simax{s_j}).
\]
Note that $\lambda(\scrS, \bvec{t}) \leq a(\simin{s_i})$ holds for any $i \in \{0, \ldots, l\}$.
Indeed, $a(\simax{s_j}) \leq a(\simin{s_i})$ holds for each $j$ since we have
\begin{align}
\simin{s_l} \prec \cdots \prec \simin{s_0} \prec \simax{s_0} \prec \cdots \prec \simax{s_l}.
\label{eq: prec relations}
\end{align}
The metric $G(\simin{s_i}, \lambda(\scrS, \bvec{t}))$ is therefore well-defined.
Since we also have $\simin{s} \prec \simin{s_i}$, we can define
the metric
\[
G(\simin{s_i}, \lambda(\scrS, \bvec{t}), \{r_{\Si}\}_{\Si \in \simin{s}})
\]
for each $\{r_{\Si}\}_{\Si \in \simin{s}} \in [0, \infty)^{\simin{s}}$.
We now define the value of $\phi_{\si, s, \scrS}$ at the pair $(p, \{r_{\Si}\})$ by
\begin{align}
\phi_{\si, s, \scrS}(p, \{r_{\Si}\})
:= \sum_{i=0}^l t_i \cdot G(\simin{s_i}, \lambda(\scrS, \bvec{t}), \{r_{\Si}\}_{\Si \in \simin{s}}),
\label{eq: map for partial partial stretching, description}
\end{align}
where 
\begin{align*}
p = \sum_{i=0}^l t_i s_i \in |\scrS|,\quad 
\{r_{\Si}\} = \{r_{\Si}\}_{\Si \in \simin{s}} \in [0, \infty)^{\simin{s}}, {\rm and}\quad 
\bvec{t} = \{t_i\}_{i = 0}^l.
\end{align*}
We note that the summation in the right-hand side of \eqref{eq: map for partial partial stretching, description} makes sense since $\Met(X)$ is convex.

It is straightforward to check that the restriction of $\phi_{\si, s, \scrS}$ to $\dom(\phi_{\si, s, \scrS}) \cap \dom(\phi_{\si, s, \scrS'})$ coincides with that of  $\phi_{\si, s, \scrS'}$ for another simplex $\scrS' \in S(\Bd(s))$.
We can therefore define the map $\phi_{\si, s}$ by gluing $\phi_{\si, s, \scrS}$'s  together.
Namely we define the map \eqref{eq: map for partial stretching} by
\[
\phi_{\si, s} = \bigcup_{\scrS \in S(\Bd(s))} \phi_{\si, s, \scrS} : \bigcup_{\scrS \in S(\Bd(s))} |\scrS| \times [0, \infty)^{\simin{s}} \to \Met(X).
\]
Here note that $\bigcup_{\scrS \in S(\Bd(s))} |\scrS| = |\Bd(s)| = |s|$.
Similarly, one can check that the restriction of $\phi_{\si, s}$ to $\dom(\phi_{\si, s}) \cap \dom(\phi_{\si, s'})$ coincides with that of  $\phi_{\si, s'}$ for another simplex $s' \in S(\Bd(\si))$.
We can therefore define the map $\phi_{\si}$ by \eqref{eq: map for total stretching, definition} as explained.

\begin{ex}
\label{ex: domain of phi_i and decomposition of the domain}

We here describe the domain of $\phi_{\si}$ in the case that $\si$ is a $1$-simplex of $\calK$.
Write $\si = \{ \Si_0, \Si_1\}$ and put $s_i = \{ \{\Si_i\}, \si \}$ $(i = 0,1)$.
The domain of $\phi_{\si}$ is described as the shadowed part in \cref{figure : domain of phi_si}.
The shadowed part is a subspace of $|\Bd(\si)| \times [0, \infty)^{\si} = |\si| \times [0, \infty)^{\si} \cong \Delta^1 \times [0, \infty)^2$ and obtained as a union 
\begin{align*}
& \bigcup_{i=0}^1 \left( |s_i| \times [0, \infty)^{\{\Si_i\}} \right) \cup \left(|\{\si\}| \times [0, \infty)^{\si} \right) \cup  \bigcup_{i=0}^1 \left( |\{\{\Si_i\}\}| \times [0, \infty)^{\{\Si_i\}}\right)\\
\cong & \bigcup_{i=0}^1 \left( \Delta^1 \times [0, \infty) \right)_i \cup  [0, \infty)^{2} \cup \bigcup_{i=0}^1 [0, \infty)_i,
\end{align*}
where $\left( \Delta^1 \times [0, \infty) \right)_i $ and $[0, \infty)_i$ are a copy of $\Delta^1 \times [0, \infty)$ and of $[0, \infty)$ respectively.
Note that the last part $|\{\{\Si_i\}\}| \times [0, \infty)^{\{\Si_i\}}$ is contained in the first part $|s_i| \times [0, \infty)^{\{\Si_i\}}$.
In \cref{figure : domain of phi_si}, the horizontal arrows $\rightarrow$ correspond to the ``stretching parameter" $r_{\Si_0}$ on $\Si_0$ and the vertical arrows $\uparrow$ correspond to $r_{\Si_1}$.
The map $\phi_{\si}$ is obtained by stretching a neighborhood of $\Si_i$ on the part $|s_i| \times [0, \infty)^{\{\Si_i\}}$, and by stretching neighborhoods of both of $\Si_0$ and $\Si_1$ on the part $|\{\si\}| \times [0, \infty)^{\si}$.
The decompositions of the domain of $\phi_{\si}$ obtained from the barycentric subdivision of $s_i$ are also described in \cref{figure : domain of phi_si}.

\begin{figure}
\begin{center}
\begin{tikzpicture}
[xscale = 0.7, yscale = 0.7]
\draw [->] [very thick](0, 0) -- (3.1, 0);
\draw [->] [very thick](0, 0) -- (0, 3.1);
\draw [->] [very thick](2, -2) -- (5.1, -2);
\draw [->] [very thick](2, -2) -- (2, 1.1);
\draw [->] [very thick, dashed](-2, 2) -- (1.1, 2);
\draw [->] [very thick](-2, 2) -- (-2, 5.1);
\draw [->] [very thick](1, -1) -- (1+3.1, -1);
\draw [->] [very thick](-1, 1) -- (-1, 1+3.1);
\draw [very thick](-2, 2) -- (2, -2);
\draw [ultra thick, dotted](-2, 4.5) -- (0, 2.5);
\draw [ultra thick, dotted](0, 2.5) -- (2.5, 2.5);
\draw [ultra thick, dotted](2.5, 2.5) -- (2.5, 0);
\draw [ultra thick, dotted](2.5, 0) -- (4.5, -2);
\draw(-0.7, -0.3) node {$|\{\si\}|$};
\draw(2.1, -2.5) node {$|\{\{\Si_0\}\}|$};
\draw(-3.2, 1.8) node {$|\{\{\Si_1\}\}|$};
\draw(0.6, -1.4) node {$|s_0|$};
\draw(-1.6, 0.7) node {$|s_1|$};
\draw(4.6, -2.4) node {$R$};
\draw(-2.4, 4.5) node {$R$};
\draw(5.6, -2) node {$r_{\Si_0}$};
\draw(2.1, 1.35) node {$r_{\Si_1}$};
\fill(0,0) circle (2.8pt);
\fill(2,-2) circle (2.8pt);
\fill(-2,2) circle (2.8pt);
\fill [gray, opacity=.3] (0,0) -- (0,3) -- (3,3) -- (3,0);
\fill [gray, opacity=.4] (0,0) -- (3,0) -- (5, -2) -- (2,-2) -- (0,0);
\fill [gray, opacity=.4] (0,0) -- (0,3) -- (-2, 5) -- (-2,2) -- (0,0);
\end{tikzpicture}
\end{center}
\caption{The domain of $\phi_{\si}$}
\label{figure : domain of phi_si}
\end{figure}

\end{ex}

We now note the following \lcnamecref{lem: cylindrical structure for any point on the family}, which will be used to prove a vanishing property of the \SW moduli space for a stretched metric.

\begin{lem}
\label{lem: cylindrical structure for any point on the family}
Let $\si$ be a simplex of $\calK$, $s$ be a simplex of $\Bd(\si)$, and $\scrS$ be a simplex of $\Bd(s)$.
Let us write $\scrS = \Set{s_{0}, \ldots, s_{l}}$ with $s_{0} \precneqq \cdots \precneqq s_{l} \prec s$, where $l = \dim \scrS$.
Let $\bvec{t} = \{t_i\}_{i = 0}^l$ be a collection of non-negative numbers with $\sum_{i=0}^l t_i = 1$.
Set $p = \sum_{i=0}^l t_i s_i \in |\scrS|$.
Then, for each $\{r_{\Si}\} = \{r_{\Si}\}_{\Si \in \simin{s}} \in [0, \infty)^{\simin{s}}$, the subspaces of $X$
\begin{align}
\Set{ \bar{X}(\lambda(\scrS, \bvec{t}), \Si)) | \Si \in \simin{s} }
\label{eq: disjoint family}
\end{align}
are mutually disjoint in X and we have
\begin{align}
\bigsqcup_{\Si \in \simin{s}} (\bar{X}(\lambda(\scrS, \bvec{t}), \Si), \phi_{\si}(p, \{r_{\Si}\})) = \bigsqcup_{\Si \in \simin{s}} \overline{\Cy}(\lambda(\scrS, \bvec{t}), \Si, r_{\Si}).
\label{eq: equation between disjoint family}
\end{align}
\end{lem}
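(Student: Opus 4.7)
The plan is to break the lemma into the two separate assertions, handling disjointness first and then computing each summand's restriction to the relevant cylindrical region.

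For the disjointness of \eqref{eq: disjoint family}, the key is a uniform upper bound on $\lambda(\scrS,\bvec{t})$. Recall from \eqref{eq: prec relations} that $\simin{s_l}\prec\cdots\prec\simin{s_0}\prec\simax{s_0}\prec\cdots\prec\simax{s_l}$, and since $s_0\prec s$ one has $\simin{s}\prec\simin{s_0}$. Applying \cref{cond: condition for a()}(a) along this chain gives $a(\simax{s_j})\le a(\simin{s_j})\le a(\simin{s})$ for every $j$, hence
\[
\lambda(\scrS,\bvec t)=\sum_{j=0}^{l}t_{j}\,a(\simax{s_{j}})\le a(\simin{s}).
\]
Consequently, for each $\Si\in\simin{s}$,
\[
\bar{X}(\lambda(\scrS,\bvec t),\Si)=[2\lambda(\scrS,\bvec t),5\lambda(\scrS,\bvec t)]\times S^{1}\times\Si\ \subset\ U(\simin{s},\Si),
\]
so the disjointness of \eqref{eq: disjoint family} is immediate from \cref{cond: condition for a()}(b) applied to the simplex $\simin{s}$.

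For the isometry \eqref{eq: equation between disjoint family}, I would show that for each fixed $\Si\in\simin{s}$ and each $i\in\{0,\ldots,l\}$, the metric $G(\simin{s_i},\lambda(\scrS,\bvec t),\{r_{\Si'}\}_{\Si'\in\simin{s}})$ restricts to the \emph{same} tensor on $\bar X(\lambda(\scrS,\bvec t),\Si)$, namely precisely the metric of $\barCystd(\lambda(\scrS,\bvec t),\Si,r_{\Si})$. The point is that $\simin{s}\prec\simin{s_i}$ (because $\simin{s_i}$ is a vertex of $s$ while $\simin{s}$ is the minimum vertex of $s$), so $\Si\in\simin{s_i}$ and the stretching modification indexed by $\simin{s}$ is well-defined; on $\bar X(\lambda(\scrS,\bvec t),\Si)$ the unmodified metric $G(\simin{s_i},\lambda(\scrS,\bvec t))$ is already the pure product $dt^{2}+d\theta^{2}+g_{\Si}$ (the cut-off function being identically $1$ there), and the modification \eqref{eq: initial metric on cylinder} to $(r_{\Si}\rho_{\lambda}(t)+1)dt^{2}+d\theta^{2}+g_{\Si}$ depends only on $\lambda(\scrS,\bvec t)$ and $r_{\Si}$, not on $i$.

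Granted this $i$-independence, the convex combination in \eqref{eq: map for partial partial stretching, description} collapses on $\bar X(\lambda(\scrS,\bvec t),\Si)$ to the common value, which by construction is the metric defining $\barCy(\lambda(\scrS,\bvec t),\Si,r_{\Si})$, and summing over $\Si\in\simin{s}$ gives \eqref{eq: equation between disjoint family}. The only genuine obstacle is bookkeeping: verifying the chains of $\prec$-relations among $\simin{s}$, $\simin{s_i}$, $\simax{s_i}$ that make the modifications well-defined and make all required product-metric formulas apply simultaneously on $\bar X(\lambda(\scrS,\bvec t),\Si)$. Once these inclusions are laid out, both assertions reduce to direct unwinding of the definitions in \cref{subsection: Construction of a family of Riemannian metrics}.
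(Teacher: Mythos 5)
Your proof is correct and follows essentially the same route as the paper: bound $\lambda(\scrS,\bvec t)$ by $a(\simin{s})$ using \eqref{eq: prec relations} together with $\simin{s}\prec\simin{s_j}$, apply \cref{cond: condition for a()} for disjointness, and then observe that $G(\simin{s_i},\lambda(\scrS,\bvec t),\{r_\Si\})|_{\bar X(\lambda(\scrS,\bvec t),\Si)}$ is independent of $i$ so the convex combination in \eqref{eq: map for partial partial stretching, description} collapses. The only cosmetic difference is that the paper records the $i$-independence by identifying each term with $G(\simin{s},\lambda(\scrS,\bvec t),\{r_\Si\})|_{\bar X(\lambda(\scrS,\bvec t),\Si)}$ rather than writing out the local product form.
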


\begin{proof}
Since we have $\simin{s} \prec \simin{s_l}$ and \eqref{eq: prec relations}, we obtain $\lambda(\scrS, \bvec{t}) \leq a(\simin{s})$.
The subspaces in \eqref{eq: disjoint family} are therefore mutually disjoint.
For each $\Si \in \simin{s}$, we have
\begin{align*}
\phi_{\si}(p, \{r_{\Si}\})|_{\bar{X}(\lambda(\scrS, \bvec{t}), \Si)}
&=  \sum_{i=0}^l t_i \cdot G(\simin{s_i}, \lambda(\scrS, \bvec{t}), \{r_{\Si}\})|_{\bar{X}(\lambda(\scrS, \bvec{t}), \Si)}\\
&=  \sum_{i=0}^l t_i \cdot G(\simin{s}, \lambda(\scrS, \bvec{t}), \{r_{\Si}\})|_{\bar{X}(\lambda(\scrS, \bvec{t}), \Si)}\\
&= G(\simin{s}, \lambda(\scrS, \bvec{t}), \{r_{\Si}\})|_{\bar{X}(\lambda(\scrS, \bvec{t}), \Si)}.
\end{align*}
This proves \eqref{eq: equation between disjoint family}.
\end{proof}

\subsection{Structure of the parameter space}

\label{subsection: Structure of the parameter space}

In this subsection we investigate the structure of a subspace of the domain of the map \eqref{eq: map for total stretching}.

Let $n \geq 0$ and $\si \in S_{n}(\calK)$ be an $n$-simplex.
For a simplex $s \in S(\Bd(\si))$ and a non-negative number $R \geq 0$, we set
\begin{align*}
\frakextdel(\si, s, R) &:= \Set{ \{r_{\Si}\}_{\Si \in \simin{s}} \in [0, R]^{\simin{s}} | \exists \Si \in \simin{s},\ r_{\Si} = R}\\
&\subset [0, \infty)^{\simin{s}}.
\end{align*}
(The notation $\frakextdel$ stands for ``the exterior half" of the boundary of $[0, R]^{\simin{s}}$.)
We shall consider the subspace
\[
\frakextdel(\si, R) := \bigcup_{s \in S(\Bd(\si))} |s| \times \frakextdel(\si, s, R)
\]
of the domain of the map \eqref{eq: map for total stretching}.
For example, in the case that $\si$ is a $1$-simplex, $\frakextdel(\si, R)$ is the union of dotted lines in \cref{figure : domain of phi_si}.
We also set
\[
\extdel(\si, R) := \Set{ \{x_{\Si}\}_{\Si \in \si} \in [0,R]^{\si} | \exists \Si \in \si,\ x_{\Si} = R }.
\]
The aim of this subsection is to prove the following \lcnamecref{prop: structure of the subspace of the domain of phi}:

\begin{prop}
\label{prop: structure of the subspace of the domain of phi}
There exists a homeomorphism
\begin{align}
\psi_{\si, R} : \frakextdel(\si, R) \to \extdel(\si, R).
\label{eq: homeomorphism to the standard simplex}
\end{align}
\end{prop}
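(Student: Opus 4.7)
The plan is to prove the proposition by induction on $n = \dim \si$. For the base case $n=0$, $\si$ is a single vertex and both $\frakextdel(\si, R) = \{R\}$ and $\extdel(\si, R) = \{R\}$ are singletons, so the homeomorphism is trivial.

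For the inductive step, I would assume the proposition for all simplices of dimension less than $n$ and consider an $n$-simplex $\si$ of $\calK$. The first task is to identify both $\frakextdel(\si, R)$ and $\extdel(\si, R)$ as closed PL $n$-disks with compatible boundary decompositions. The space $\extdel(\si, R)$ is the closed star of the corner $(R, \ldots, R)$ in $\partial[0, R]^\si$, and hence is a standard PL $n$-disk whose manifold boundary decomposes as $\bigcup_{\Si \in \si} \extdel(\si \setminus \{\Si\}, R)$, with each summand identified via $y_\Si = 0$. For $\frakextdel(\si, R)$, one analyzes the stratification $\bigcup_{s \in S(\Bd(\si))} |s| \times \frakextdel(\si, s, R)$ and shows that it likewise forms a closed $n$-disk whose boundary decomposes as $\bigcup_{\Si \in \si} F_\Si$, where each $F_\Si$ is canonically identified with $\frakextdel(\si \setminus \{\Si\}, R)$ (informally, $F_\Si$ collects the ``free ends'' of the stratification in the $\Si$-direction, and matches the $(n-1)$-dimensional structure of the opposite face).

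Once the two disk structures with their boundary decompositions are in place, the induction hypothesis provides homeomorphisms $F_\Si \cong \frakextdel(\si \setminus \{\Si\}, R) \cong \extdel(\si \setminus \{\Si\}, R)$ for each vertex $\Si$ of $\si$, and these match compatibly on pairwise intersections $F_\Si \cap F_{\Si'}$ (which, by removing a second vertex, reduce to the induction hypothesis applied to a simplex of dimension $n-2$). Assembling these pieces yields a homeomorphism $\partial \frakextdel(\si, R) \cong \partial \extdel(\si, R)$. To extend this across the interior, I would use the cone structures of both spaces: $\extdel(\si, R)$ is star-shaped with respect to the apex $(R, \ldots, R)$, and $\frakextdel(\si, R)$ is analogously star-shaped with respect to the point lying over the barycenter of $|\si|$ in the ``central'' piece $|\{\si\}| \times \frakextdel(\si, \{\si\}, R)$. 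Radial extension of the boundary homeomorphism then produces the required $\psi_{\si, R}$.

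The main technical obstacle is verifying that $\frakextdel(\si, R)$ is indeed an $n$-disk with the claimed boundary decomposition. In contrast to $\extdel(\si, R)$, whose disk structure is manifest, the PL structure of $\frakextdel(\si, R)$ is obscured by the many pieces $|s| \times \frakextdel(\si, s, R)$ and the identifications between adjacent pieces arising from the subordination relations $s \prec s'$ together with the corresponding inclusion $\simin{s'} \subset \simin{s}$. One must track how the free ends of the stratification, namely the points where $p$ lies at a vertex $\Si$ of $\si$ with $\{r_\Si\} = (R)$, assemble into a collar-like neighborhood of the boundary, and how the interpolating strata fill the interior to form the cone. This combinatorial bookkeeping is the heart of the argument and should be facilitated by the explicit coordinate setup already developed in \cref{subsection: Construction of a family of Riemannian metrics}.
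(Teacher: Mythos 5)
Your plan is a genuinely different strategy from the paper's, but it has a gap that you yourself flag, plus one step that fails as stated. The paper proves the proposition by a direct piecewise-affine construction: it decomposes $\extdel(\si, R)$ into pieces $Q(\si, \Si, \tau, s, R) \cong \triangle(\si, \Si, \tau, s, R) \times \Box(\si, \Si, \tau, R)$ indexed by a vertex $\Si \in \si$, a face $\tau$ with $\Si \in \tau \prec \si$, and a full flag $s$ from $\tau$ to $\si$ (this tiling is the content of \cref{lem: decomposition of Q(tau)}), matches it against the decomposition \eqref{eq: decomposition to 3 unions, more detailed} of $\frakextdel(\si, R)$ into pieces $|s| \times \frakextdel(\si, \Si, \tau, R)$, and maps each piece onto the corresponding one by a product of two affine homeomorphisms $\psi^{\triangle}_{\si, \Si, \tau, s, R} \times \psi^{\Box}_{\si, \Si, \tau, s, R}$. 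Overlap compatibility is checked directly and the glued map is a homeomorphism because its inverse is glued from the inverses. No abstract disk recognition, no induction on dimension, and no Alexander trick are invoked.

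The step you defer, namely showing that $\frakextdel(\si, R)$ is a PL $n$-disk with boundary $\bigcup_{\Si \in \si} F_\Si$ compatibly matching $\frakextdel(\si \setminus \{\Si\}, R)$, is the real content of the proposition: to verify it you would have to carry out essentially the same combinatorics as the explicit construction, so the inductive disk-recognition scheme does not save work. Worse, the cone-extension step fails as proposed, because the domain $\bigcup_{s \in S(\Bd(\si))} |s| \times [0,\infty)^{\simin{s}}$ of $\phi_\si$ is \emph{not} convex in $|\si| \times [0,\infty)^{\si}$: as the stratum $s$ grows, $\simin{s}$ shrinks, so fewer $r$-coordinates are permitted to be non-zero. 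For a concrete failure, take $\si = \{\Si_0, \Si_1\}$ and the point $(p, r)$ with $p = \Si_0 \in |\{\{\Si_0\}\}|$ and $r = (R) \in [0, R]^{\{\Si_0\}}$; the straight segment to a center of the form $(b, (R, R))$, with $b$ the barycenter, immediately enters the open edge of $s' = \{\{\Si_0\}, \{\si\}\}$, where $\simin{s'} = \{\Si_0\}$ forces $r_{\Si_1} = 0$, while the interpolated $r$-coordinate has $r_{\Si_1} = (1-t)R \neq 0$ for all $t < 1$. So $\frakextdel(\si, R)$ is not star-shaped with respect to the proposed center in the ambient linear structure, and radial extension of a boundary homeomorphism cannot be applied naively. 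A cone structure of course exists once the space is known to be a disk, but producing one is equivalent to the proposition you are trying to prove. You should instead pursue the paper's direct decomposition-and-gluing.
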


Before proving \cref{prop: structure of the subspace of the domain of phi}, we decompose the domain and the range of the map \eqref{eq: homeomorphism to the standard simplex} into small pieces.
Let $\Si \in \si$ be a vertex and $\tau \prec \si$ be a face with $\Si \in \tau$.
We set 
\begin{align*}
\frakextdel(\si, \Si, \tau, R) := \Set{ \{r_{\Si'}\}_{\Si' \in \tau} \in [0, R]^{\tau} | r_{\Si} = R}.
\end{align*}
For a simplex $s \in S(\Bd(\si))$ satisfying $\tau = \simin{s}$, we have $\frakextdel(\si, \Si, \tau, R) \subset \frakextdel(\si, s, R)$.
Moreover, we also have the decomposition
\begin{align}
\frakextdel(\si, R)
&=  \bigcup_{\Si \in \si} \bigcup_{\Si \in \tau \prec \si} \bigcup_{\substack{ s \in S(\Bd(\si)), \\  \tau = \simin{s}}} |s| \times \frakextdel(\si, \Si, \tau, R) \label{eq: decomposition to 3 unions}\\
&=  \bigcup_{\Si \in \si} \bigcup_{\Si \in \tau \prec \si} \bigcup_{\substack{ s \in S(\Bd(\si)), \\  \tau = \simin{s}, \\ \dim{s} + \dim\tau = n}} |s| \times \frakextdel(\si, \Si, \tau, R).
 \label{eq: decomposition to 3 unions, more detailed}
\end{align}
Here the variables in the three unions in the right-hand side of \eqref{eq: decomposition to 3 unions} and \eqref{eq: decomposition to 3 unions, more detailed} are $\Si$, $\tau$, and $s$ respectively.
In \eqref{eq: decomposition to 3 unions, more detailed}, $\dim{s}$ and $\dim\tau$ denote the dimension of $s$ as a simplex of $\Bd(\si)$  and that of $\tau$ as a simplex of $\calK$ respectively.
The space $\extdel(\si, R)$ also decomposes as $\extdel(\si, R) = \bigcup_{\Si \in \si} \extdel(\si, \Si, R)$,
where we set
\[
\extdel(\si, \Si, R) := \Set{ \{x_{\Si'}\}_{\Si' \in \si} \in [0,R]^{\si} | x_{\Si} = R } \subset \extdel(\si, R).
\]

For each $\Si \in \si$, we will construct a homeomorphism
\begin{align}
\psi_{\si, \Si, R} :  \bigcup_{\Si \in \tau \prec \si} \bigcup_{\substack{ s \in S(\Bd(\si)), \\  \tau = \simin{s}, \\ \dim{s} + \dim\tau = n}} |s| \times \frakextdel(\si, \Si, \tau, R) \to \extdel(\si, \Si, R)
\label{eq: homeomorphism to the part of standard simplex}
\end{align}
and define $\psi_{\si, R}$ by gluing $\psi_{\si, \Si, R}$'s.
Note that we have a homeomorphism
\begin{align}
\bar{\psi}_{\si,\Si, R} : \extdel(\si, \Si, R) \to [0, R]^{\siSi}
\label{eq: the map for identification between disk and cube}
\end{align}
given by
\[
\{x_{\Si'}\}_{\Si' \in \si} \mapsto \{x_{\Si'}\}_{\Si' \in \siSi},
\]
and we consider $[0, R]^{\siSi}$ in order to construct \eqref{eq: homeomorphism to the part of standard simplex}.
For a face $\tau \prec \si$ with $\Si \in \tau$, we define a collection of boundary points
\[
\{b_{\Si'}(\tau)\} = \{b_{\Si'}(\tau)\}_{\Si' \in \siSi} \in \{0,R\}^{\siSi} \subset \del([0,R]^{\siSi})
\]
by
\[
\Si' \in \tau \setminus \{\Si\} \Rightarrow b_{\Si'}(\tau) = R, {\rm \ and}\quad \Si' \notin \tau \setminus \{\Si\} \Rightarrow b_{\Si'}(\tau) = 0.
\]
We set
\[
Q(\si, \Si, \tau, R)
:= \Set{ \{x_{\Si'}\}_{\Si' \in \siSi} \in [0,R]^{\siSi} | \left|x_{\Si'} - b_{\Si'}(\tau)\right| \leq R/2}.
\]
Then we obtain the decomposition
\[
[0, R]^{\siSi}
= \bigcup_{\Si \in \tau \prec \si} Q(\si, \Si, \tau, R).
\]
Indeed, for any point $\{x_{\Si'}\} \in [0, R]^{\siSi}$, let us define $\tau \prec \si$ by 
\[
\tau := \Set{\Si' \in \siSi | \left|x_{\Si'} - R\right| \leq R/2} \cup \{\Si\},
\]
then we have $\{x_{\Si'}\} \in Q(\si, \Si, \tau, R)$.

For $\Si \in \si$ and $\tau \prec \si$ with $\Si \in \tau$, we will construct a homeomorphism
\begin{align}
\psi_{\si, \Si, \tau, R} : \bigcup_{\substack{ s \in S(\Bd(\si)), \\  \tau = \simin{s}, \\ \dim{s} + \dim\tau = n}} |s| \times \frakextdel(\si, \Si, \tau, R) \to Q(\si, \Si, \tau, R)
\label{eq: the homeo map decomposed 1}
\end{align}
and define $\psi_{\si, \Si, R}$ by gluing $\psi_{\si, \Si, \tau, R}$'s.
For a simplex $s \in S(\Bd(\si))$ satisfying $\tau = \simin{s}$ and $\dim{s} + \dim\tau = n$, we define subsets $\triangle(\si, \Si, \tau, s, R)$, $\Box(\si, \Si, \tau, R)$, and $Q(\si, \Si, \tau, s, R)$ of $Q(\si, \Si, \tau, R)$ as follows.
Note that we have $\si = \simax{s}$ (i.e. $n = \dim\simax{s}$) since
\[
n \geq \dim\simax{s} \geq \dim{s} + \dim\simin{s} = \dim{s} + \dim\tau = n.
\]
Such a simplex $s$ therefore corresponds to a sequence
\[
\tau = \si_{0} \precneqq \cdots \precneqq \si_{k} = \si,
\]
where $k = n - \dim\tau$.
We define
\begin{align*}
\triangle(\si, \Si, \tau, s, R)
& := \Conv\Set{ \left\{\frac{1}{2}b_{\Si'}(\si_i) \right\}_{\Si' \in \si \setminus \tau} | 0 \leq i \leq k } \\
&\subset [0,R/2]^{\si \setminus \tau},\\
\Box(\si, \Si, \tau, R)
& := Q(\si, \Si, \tau, R) \cap [0,R]^{\tau \setminus \{\Si\}} =[R/2, R]^{\tau \setminus \{\Si\}} ,
\end{align*}
and
\begin{align*}
&Q(\si, \Si, \tau, s, R)\\
&:=\Set{
\{x_{\Si'}\}_{\Si' \in \siSi} \in Q(\si, \Si, \tau, R) |
\begin{matrix}
\{x_{\Si'}\}_{\Si' \in \si \setminus \tau} \in \triangle(\si, \Si, \tau, s, R),\\
\{x_{\Si'}\}_{\Si' \in \tau \setminus \{\Si\}} \in \Box(\si, \Si, \tau, R)
\end{matrix}
}\\
&\cong \triangle(\si, \Si, \tau, s, R) \times \Box(\si, \Si, \tau, R).
\end{align*}
Here, in the case that $\tau = \si$ and $\tau = \{\Si\}$, we identify $Q(\si, \Si, \tau, s, R)$ with $\Box(\si, \Si, \tau, R)$ and $\triangle(\si, \Si, \tau, s, R)$ respectively.

\begin{lem}
\label{lem: decomposition of Q(tau)}
The space $Q(\si, \Si, \tau, R)$ decomposes as
\begin{align}
Q(\si, \Si, \tau, R)
= \bigcup_{\substack{ s \in S(\Bd(\si)), \\  \tau = \simin{s}, \\ \dim{s} + \dim\tau = n}} Q(\si, \Si, \tau, s, R).
\label{eq: decomposition of Q(tau)}
\end{align}
\end{lem}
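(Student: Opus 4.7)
The plan is to reduce the claimed decomposition to the classical order (Kuhn) triangulation of a cube. First I would exploit the product structure
\[
Q(\si, \Si, \tau, R) = [0, R/2]^{\si \setminus \tau} \times [R/2, R]^{\tau \setminus \{\Si\}},
\]
which follows directly from $b_{\Si'}(\tau) = R$ for $\Si' \in \tau \setminus \{\Si\}$ and $b_{\Si'}(\tau) = 0$ for $\Si' \in \si \setminus \tau$. The second factor is precisely $\Box(\si, \Si, \tau, R)$, and it appears in every $Q(\si, \Si, \tau, s, R)$ on the right-hand side of \eqref{eq: decomposition of Q(tau)}. Hence it suffices to show
\[
[0, R/2]^{\si \setminus \tau} = \bigcup_{s} \triangle(\si, \Si, \tau, s, R),
\]
with $s$ ranging as in the statement.

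Next I would identify the indexing set combinatorially. A simplex $s$ satisfying $\simin{s} = \tau$ and $\dim s + \dim \tau = n$ has $\simax{s} = \si$ (as already observed in the excerpt), so it corresponds to a chain $\tau = \si_0 \precneqq \si_1 \precneqq \cdots \precneqq \si_k = \si$ in $S(\calK)$ with $k = n - \dim \tau$. Because $|\si \setminus \tau| = k$ and we have $k$ strict inclusions, each step must add exactly one vertex; such chains are therefore in bijection with orderings $\Si_{\pi(1)}, \ldots, \Si_{\pi(k)}$ of $\si \setminus \tau$ via $\si_i = \tau \cup \{\Si_{\pi(1)}, \ldots, \Si_{\pi(i)}\}$.

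For the chain associated to $\pi$, the $i$-th vertex $v_i = \{\tfrac{1}{2} b_{\Si'}(\si_i)\}_{\Si' \in \si \setminus \tau}$ has coordinate $R/2$ at $\Si_{\pi(1)}, \ldots, \Si_{\pi(i)}$ and $0$ elsewhere. Writing a general convex combination $x = \sum_{i=0}^k t_i v_i$ yields
\[
x_{\Si_{\pi(j)}} = \frac{R}{2}(t_j + t_{j+1} + \cdots + t_k),
\]
and the map $(t_0, \ldots, t_k) \mapsto (x_{\Si_{\pi(j)}})_{j=1}^k$ is a bijection (with inverse $t_j = \tfrac{2}{R}(x_{\Si_{\pi(j)}} - x_{\Si_{\pi(j+1)}})$, etc.) from the standard $k$-simplex onto the order simplex
\[
\left\{(x_{\Si'}) \in [0, R/2]^{\si \setminus \tau} \mid x_{\Si_{\pi(1)}} \geq x_{\Si_{\pi(2)}} \geq \cdots \geq x_{\Si_{\pi(k)}}\right\}.
\]
Since any point of $[0, R/2]^{\si \setminus \tau}$ can be sorted — producing a permutation $\pi$ whose order simplex contains it — the union of these simplices covers the whole cube, which finishes the proof. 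There is no real obstacle; the only things to watch are the degenerate cases $\tau = \{\Si\}$ and $\tau = \si$, where $\Box$ or $\triangle$ collapses to a point, but the identifications set up in the excerpt handle both correctly.
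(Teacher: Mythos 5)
Your proof is correct and uses essentially the same idea as the paper: the simplices $s$ with $\simin{s}=\tau$ and $\dim s + \dim\tau = n$ correspond to chains that add one vertex of $\si\setminus\tau$ at a time, each $\triangle(\si,\Si,\tau,s,R)$ is the order simplex for the corresponding permutation, and sorting the coordinates of an arbitrary point produces a permutation (hence a chain $s$) whose order simplex contains it. You merely make explicit what the paper leaves implicit, namely that this is the Kuhn (order) triangulation of the cube $[0,R/2]^{\si\setminus\tau}$.
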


\begin{proof}
Set $k = n - \dim\tau$.
It is easy to check the case when $k \leq 1$, therefore we assume $k \geq 2$.
It suffices to show that the left-hand side of \eqref{eq: decomposition of Q(tau)} is included in the right-hand side.
Let $\{x_{\Si'}\}_{\Si' \in \siSi}$ be a point in $Q(\si, \Si, \tau, R)$.
We inductively take $\si_0, \ldots, \si_k$ as follows.
We first set $\si_0 := \tau$.
For $i$ with $0 \leq i \leq k$, assume that $\si_i$ is already given.
Take $\Si_{i} \in \si \setminus \si_i$ such that $x_{\Si_i}$ attains the maximum
\[
\max_{\Si' \in \si \setminus \si_{i}}{x_{\Si'}}.
\]
Using this $\Si_{i}$ we define $\si_{i+1} := \si_i \cup \{\Si_i\}$.
Through this procedure we eventually have $\si_0, \ldots, \si_k$; we set $s := \Set{\si_0, \ldots, \si_k}$.
We note that 
\begin{align*}
\{x_{\Si'}\}_{\Si' \in \si \setminus \tau} \in \triangle(\si, \Si, \tau, s, R)
\end{align*}
holds.
Indeed, we have
\begin{align*}
\triangle(\si, \Si, \tau, s, R) =  \bigcap_{0 \leq i \leq k-2} \Set{ \{x_{\Si'}'\}_{\Si' \in \si \setminus \tau} \in [0,R/2]^{\si \setminus \tau} | x_{\Si_i}' \geq x_{\Si_{i+1}}' }
\end{align*}
and $x_{\Si_0} \geq \cdots \geq x_{\Si_{k-1}}$ by the definition of $\Si_i$.
Hence it follows that $\{x_{\Si'}\}_{\Si' \in \siSi} \in Q(\si, \Si, \tau, s, R)$.
\end{proof}

Using the decomposition above, we now prove \cref{prop: structure of the subspace of the domain of phi}.

\begin{proof}[Proof of \cref{prop: structure of the subspace of the domain of phi}]
Let $\Si$ be a vertex of $\si$, $\tau$ be a face of $\si$ with $\Si \in \tau$, and $s$ be a simplex of $\Bd(\si)$ satisfying $\tau = \simin{s}$ and $\dim{s} + \dim\tau = n$.
We define a map
\begin{align}
\psi_{\si, \Si, \tau, s, R} : |s| \times \frakextdel(\si, \Si, \tau, R) \to Q(\si, \Si, \tau, s, R)
\label{eq: the map most decomposed}
\end{align}
as follows.
If we write $s = \Set{\si_0, \ldots, \si_k}$, we have $|s| = \Conv\Set{\si_0, \ldots, \si_k}$.
We define a map
\[
\psi_{\si, \Si, \tau, s, R}^{\triangle} : |s| \to \triangle(\si, \Si, \tau, s, R)
\]
by
\[
\sum_{i=0}^k t_i \si_i
\mapsto
\sum_{i=0}^k t_i \cdot \left\{\frac{1}{2}b_{\Si'}(\si_i) \right\}_{\Si' \in \si \setminus \tau}
\]
for $t_i \geq 0$ with $\sum_{i=0}^k t_i = 1$.
We next define a map
\[
\psi_{\si, \Si, \tau, s, R}^{\Box} : \frakextdel(\si, \Si, \tau, R) \to \Box(\si, \Si, \tau, R)
\]
by
\[
\{ r_{\Si'} \}_{\Si' \in \tau \setminus \{\Si\}}
\mapsto
\left\{ \frac{1}{2} \left(r_{\Si'} + 1 \right)\right\}_{\Si' \in \tau \setminus \{\Si\}}
\]
using the identification $\frakextdel(\si, \Si, \tau, R) \cong  [0, R]^{\tau \setminus \{\Si\}}$ given by
\[
\{r_{\Si'}\}_{\Si' \in \simin{s}} \mapsto \{r_{\Si'}\}_{\Si' \in \tau \setminus \{\Si\}}.
\]
We define the map \eqref{eq: the map most decomposed} as the product map of $\psi_{\si, \Si, \tau, s, R}^{\triangle}$ and $\psi_{\si, \Si, \tau, s, R}^{\Box}$.
The map $\psi_{\si, \Si, \tau, s, R}$ is obviously a homeomorphism.
We denote by $\psi_{\si, \Si, \tau, s, R}^{-1}$ the inverse map.

It is straightforward to check that the restriction of $\psi_{\si, \Si, \tau, s, R}$ to $\dom(\psi_{\si, \Si, \tau, s, R}) \cap \dom(\psi_{\si, \Si, \tau, s', R})$ coincides with that of  $\psi_{\si, \Si, \tau, s', R}$ for another simplex $s'$.
We can therefore define the map \eqref{eq: the homeo map decomposed 1} by gluing $\psi_{\si, \Si, \tau, s, R}$'s  together.
Namely we define the map \eqref{eq: the homeo map decomposed 1} as
\[
\bigcup_{\substack{ s \in S(\Bd(\si)), \\  \tau = \simin{s}, \\ \dim{s} + \dim\tau = n}} \psi_{\si, \Si, \tau, s, R} : \bigcup_{\substack{ s \in S(\Bd(\si)), \\  \tau = \simin{s}, \\ \dim{s} + \dim\tau = n}} |s| \times \frakextdel(\si, \Si, \tau, R) \to Q(\si, \Si, \tau, R).
\]
In a similar vein, we obtain a map from $\dom(\psi_{\si, \Si, R})$ to $[0, R]^{\siSi}$ by gluing $\psi_{\si, \Si, \tau, R}$'s  together, and we define the map \eqref{eq: homeomorphism to the part of standard simplex} by composing the inverse map of the homeomorphism \eqref{eq: the map for identification between disk and cube} with the glued map.
Similarly we define the map \eqref{eq: homeomorphism to the standard simplex} by gluing $\psi_{\si, \Si, R}$'s  together.
We obtain the inverse map of $\psi_{\si, R}$ by gluing $\psi_{\si, \Si, \tau, s, R}^{-1}$'s together and therefore $\psi_{\si, R}$ is a homeomorphism.
\end{proof}

\begin{ex}
Let $\si = \{ \Si_0, \Si_1\}$ be a $1$-simplex of $\calK$.
Then the possibilities of triples $(\Si, \tau, s)$ such that $\Si \in \tau \prec \si$, $\tau = \simin{s}$, and $\dim{s} + \dim\tau = n$ are as follows:
\begin{enumerate}
\item $\Si = \Si_0,\ \tau = \{\Si_0\},\ s = \Set{ \{\Si_0\}, \{\Si_0, \Si_1\} }$,
\item $\Si = \Si_0,\ \tau = \{\Si_0, \Si_1\},\ s = \Set{ \{\Si_0, \Si_1\} }$,
\item $\Si = \Si_1,\ \tau = \{\Si_1\},\ s = \Set{ \{\Si_1\}, \{\Si_0, \Si_1\} }$, and
\item $\Si = \Si_1,\ \tau = \{\Si_0, \Si_1\},\ s = \Set{ \{\Si_0, \Si_1\} }$.
\end{enumerate}
The decompositions discussed above of the domain and the range of $\psi_{\si, R}$ are described in \cref{figure : domain and range of psi_si}.
(See also \cref{ex: domain of phi_i and decomposition of the domain} and \cref{figure : domain of phi_si}.)
The (upper and lower case) Roman numerals in \cref{figure : domain and range of psi_si} correspond to Arabic numerals above.
In \cref{figure : domain and range of psi_si}, the parts (I), (II), (III), and (IV) correspond to (i), (ii), (iii), and (iv) through $\psi_{\si, R}$ respectively.

\begin{figure}
\begin{center}
\begin{tikzpicture}
[xscale = 0.7, yscale = 0.7]
\draw [->] [thick](0, 0) -- (3.1, 0);
\draw [->] [thick](0, 0) -- (0, 3.1);
\draw [->] [thick](2, -2) -- (5.1, -2);
\draw [->] [thick](2, -2) -- (2, 1.1);
\draw [->] [thick](-2, 2) -- (1.1, 2);
\draw [->] [thick](-2, 2) -- (-2, 5.1);
\draw [thick](-2, 2) -- (2, -2);
\draw [ultra thick](-2, 4.5) -- (0, 2.5);
\draw [ultra thick](0, 2.5) -- (2.5, 2.5);
\draw [ultra thick](2.5, 2.5) -- (2.5, 0);
\draw [ultra thick](2.5, 0) -- (4.5, -2);
\draw(-0.7,-0.3) node {$|\{\si\}|$};
\draw(2.1, -2.5) node {$|\{\{\Si_0\}\}|$};
\draw(-3.2, 1.8) node {$|\{\{\Si_1\}\}|$};
\draw(0.6,-1.4) node {$|s_0|$};
\draw(-1.6,0.7) node {$|s_1|$};
\draw(4.6, -2.4) node {$R$};
\draw(-2.4, 4.5) node {$R$};
\draw(4, -1) node {(I)};
\draw(3, 1) node {(I\hspace{-.1em}I)};
\draw(-0.9, 4.2) node {(I\hspace{-.1em}I\hspace{-.1em}I)};
\draw(1.4, 3) node {(I\hspace{-.1em}V)};
\fill(0,0) circle (2.8pt);
\fill(2,-2) circle (2.8pt);
\fill(-2,2) circle (2.8pt);
\draw(5.6, -2) node {$r_{\Si_0}$};
\draw(2.1, 1.35) node {$r_{\Si_1}$};
\draw [->] [thick](7+0, 0) -- (7+4.1, 0);
\draw [->] [thick](7+0, 0) -- (7+0, 4.1);
\draw [ultra thick](7+0, 3.5) -- (7+3.5, 3.5);
\draw [ultra thick](7+3.5, 3.5) -- (7+3.5, 0);
\draw(7+3.5, -0.5) node {$R$};
\draw(7-0.3, 3.5) node {$R$};
\draw(7+4.6, 0) node {$x_{\Si_0}$};
\draw(7+0, 4.5) node {$x_{\Si_1}$};
\fill(7+3.5*0.5,3.5) circle (2.8pt);
\fill(7+3.5,3.5*0.5) circle (2.8pt);
\draw(7+4, 1) node {(i)};
\draw(7+4, 2.5) node {(ii)};
\draw(7+1, 4) node {(iii)};
\draw(7+2.5, 4) node {(iv)};
\end{tikzpicture}
\end{center}
\caption{The domain and the range of $\psi_{\si}$ for $\si = \{ \Si_0, \Si_1\}$}
\label{figure : domain and range of psi_si}
\end{figure}

\end{ex}

\begin{ex}
Let $\si = \{ \Si_0, \Si_1, \Si_2\}$ be a $2$-simplex of $\calK$.
For $\Si = \Si_0$, the possibilities of pairs $(\tau, s)$ such that $\Si \in \tau \prec \si$, $\tau = \simin{s}$ and $\dim{s} + \dim\tau = n$ are as follows:
\begin{enumerate}
\item $\tau = \{\Si_0\},\ s = \Set{ \{\Si_0\}, \{\Si_0, \Si_1\}, \{\Si_0, \Si_1, \Si_2\} }$,
\item $\tau = \{\Si_0\},\ s = \Set{ \{\Si_0\}, \{\Si_0, \Si_2\}, \{\Si_0, \Si_1, \Si_2\} }$,
\item $\tau = \{\Si_0, \Si_1\},\ s = \Set{ \{\Si_0, \Si_1\}, \{\Si_0, \Si_1, \Si_2\} }$,
\item $\tau = \{\Si_0, \Si_2\},\ s = \Set{ \{\Si_0, \Si_2\}, \{\Si_0, \Si_1, \Si_2\} }$, and
\item $\tau = \{\Si_0, \Si_1, \Si_2\},\ s = \Set{ \{\Si_0, \Si_1, \Si_2\} }$.
\end{enumerate}
For each case that $\Si = \Si_1$ and $\Si = \Si_2$, we have similar five combinations of pairs $(\tau, s)$.
The decompositions corresponding to all of these combinations of $(\tau, s)$ of the range of $\psi_{\si, R}$ are described in \cref{figure : range of psi_si}.
The lower case Roman numerals in \cref{figure : range of psi_si} correspond to Arabic numerals above.

\begin{figure}
\begin{center}
\begin{tikzpicture}
[xscale = 0.7, yscale = 0.7]
\draw [->] [thick](3, 0) -- (4, 0);
\draw [->] [thick](0, 3) -- (0, 4);
\draw [->] [thick](-1.5, -1.5) -- (-2, -2);
\draw [very thick](0, 3) -- (3, 3);
\draw [very thick](3, 0) -- (3, 3);
\draw [very thick](-1.5, 1.5) -- (-1.5, -1.5);
\draw [very thick](-1.5, 1.5) -- (0, 3);
\draw [very thick](-1.5, 1.5) -- (1.5, 1.5);
\draw [very thick](3, 3) -- (1.5, 1.5);
\draw [very thick](-1.5, -1.5) -- (1.5, -1.5);
\draw [very thick](1.5, 1.5) -- (1.5, -1.5);
\draw [very thick](3, 0) -- (1.5, -1.5);
\draw [very thick](-1.5/2, 1.5+1.5/2) -- (3-1.5/2, 1.5+1.5/2);
\draw [very thick](1.5, 3) -- (0, 1.5);
\draw [very thick](3-1.5/2, -1.5+1.5/2) -- (3-1.5/2, 1.5+1.5/2);
\draw [very thick](0, -1.5) -- (0, 1.5);
\draw [very thick](-1.5, 0) -- (1.5, 0);
\draw [very thick](3, 1.5) -- (1.5, 0);
\draw [very thick](0, 0) -- (-1.5, -1.5);
\draw [very thick](0, 3) -- (1.5/2, 1.5+1.5/2);
\draw [very thick](3, 0) -- (3-1.5/2, 1.5/2);
\draw(-1.9,-2.3) node {$x_{\Si_0}$};
\draw(4.5,-0.1) node {$x_{\Si_1}$};
\draw(0.2,4.2) node {$x_{\Si_2}$};
\draw(-0.5,-1.1) node {\small{(i)}};
\draw(-1.1,-0.5) node {\small{(ii)}};
\draw(0.7,-0.7) node {\small{(iii)}};
\draw(-0.7,0.7) node {\small{(iv)}};
\draw(0.7,0.7) node {\small{(v)}};
\fill [gray, opacity=.3] (0, 3) -- (3, 3) -- (3-3/2, 3-3/2) -- (-3/2,3-3/2);
\fill [gray, opacity=.3] (3, 3) -- (3, 0) -- (3-3/2, -3/2) -- (3-3/2, 3-3/2);
\fill [gray, opacity=.2] (3/2, 3/2) -- (3/2, 3/2-3) -- (3/2-3, 3/2-3) -- (3/2-3, 3/2);
\end{tikzpicture}
\end{center}
\caption{The range of $\psi_{\si}$ for $\si = \{ \Si_0, \Si_1, \Si_2\}$}
\label{figure : range of psi_si}
\end{figure}

\end{ex}

\subsection{Vanishing of solutions to the \SW equations}

\label{Vanishing property of the \SW moduli space on a family}

A basic tool to construct the cohomological invariant is the vanishing property of the \SW moduli space for a metric obtained by stretching neighborhoods of embedded surfaces violating the adjunction inequalities.
This is originally due to Kronheimer--Mrowka's paper on the Thom conjecture~\cite{MR1306022}.
While a cylinder with infinite length was used in \cite{MR1306022}, we here need some quantitative estimate for the length of stretched cylinders.
Such an analytical argument has been given in the author's paper \cite{Konno}.
In this subsection we adjust it to the setting of this paper.

We use the following terminology and notation on the \SW equations.
Fix a $\spc$ structure $\fraks$ on a smooth oriented closed connected $4$-manifold $X$.
Let $h$ be a Riemannian metric on $X$.
We denote by $\Omega^+ = \Omega^+_h = \Gamma(\Lambda^+_h) $ the space of self-dual $2$-forms on $X$ with respect to $h$.
Let $S^\pm \to X$ and $L \to X$ denote the spinor bundles and the determinant line bundle of $\fraks$ respectively.
A $U(1)$-connection $A$ on $L$ gives rise to the Dirac operator $D_A : \Gamma(S^+) \to \Gamma(S^-)$.
For a $U(1)$-connection $A$, a positive spinor $\Phi \in \Gamma(S^+)$, and an imaginary self-dual $2$-form $\mu \in i\Omega^+$, we call the equations
\begin{align*}
\begin{cases}
\rho(F^+_A - \mu) = \sigma(\Phi, \Phi),\\
D_A \Phi = 0
\end{cases}
\end{align*}
the (perturbed) \SW equations with respect to $(h, \mu)$.
Here $\rho : \Lambda^+ \to \mathfrak{su}(S^+)$ is the map obtained from the Clifford multiplication, $F^+_A$ is the self-dual part of the curvature $F_A$ of $A$, and $\sigma(\cdot, \cdot)$ is the quadratic form given by $\sigma(\Phi, \Phi) = \Phi \otimes \Phi^\ast - |\Phi|^2 \id/2$.
For $A$ and $\Phi$, we call the equations
\begin{align*}
\begin{cases}
\rho(F^+_A) = \sigma(\Phi, \Phi),\\
D_A \Phi = 0
\end{cases}
\end{align*}
the unperturbed \SW equations with respect to $h$.

We here need the following analytical \cref{lem: cylinder,lem: L^2-a priori}.
Let $\|\cdot\|_{L^2(X, h)}$ denotes the $L^2$-norm on $X$ with respect to a given metric $h$.
For a surface $\Si$ and $R > 0$, we equip $\Si$ with the metric of constant scalar curvature and of unit area, $S^1$ with the metric of unit length, and $[0,R] \times S^1 \times \Si$ with the standard product metric as in \cref{subsection: Construction of a family of Riemannian metrics}.
The statement of \cref{lem: cylinder} appears in the proof of Lemma~4 in Kronheimer--Mrowka~\cite{MR1492131}.

\begin{lem}[Kronheimer--Mrowka~\cite{MR1492131}]
\label{lem: cylinder}
Let $h$ be a metric on $X$, $\Si$ be a surface embedded in $X$ with $[\Si]^2 = 0$, and $R$ be a positive number.
Suppose that $(X, h)$ contains a Rimannian submanifold $(X', h)$ (with boundary) which is isometric to $[0,R] \times S^1 \times \Si$.
Then, for any closed 2-form $\omega$ on $X$, the inequality
\begin{align}
| [\omega] \cdot [\Si] |^2 \leq \frac{\|\omega\|^2_{L^2(X', h)}}{R}
\label{eq: key estimate}
\end{align}
holds.
\end{lem}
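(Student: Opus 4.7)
The plan is to prove the inequality by slicing the cylindrical region $X' \cong [0,R] \times S^1 \times \Sigma$ into parallel copies of $\Sigma$, applying the Cauchy--Schwarz inequality on each slice, and integrating out the remaining coordinates with Fubini. The key observation is that since $[\Sigma]^2 = 0$, every slice $\Sigma_{t,\theta} := \{(t,\theta)\} \times \Sigma$ is homologous to $\Sigma$, so for the closed form $\omega$ the pairing
\[
\int_{\Sigma_{t,\theta}} \omega = [\omega] \cdot [\Sigma]
\]
is independent of $(t,\theta) \in [0,R] \times S^1$.

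First I would write $i_{t,\theta} : \Sigma \inc X'$ for the obvious inclusion. Then $i_{t,\theta}^*\omega$ is a $2$-form on $\Sigma$, which one can express as $f_{t,\theta} \cdot \mathrm{vol}_{\Sigma}$ for some function $f_{t,\theta}$. Cauchy--Schwarz on $(\Sigma, g_\Sigma)$ combined with the unit-area assumption yields
\[
\left| [\omega] \cdot [\Sigma] \right|^2
= \left|\int_\Sigma f_{t,\theta}\, \mathrm{vol}_\Sigma\right|^2
\leq \mathrm{Area}(\Sigma) \cdot \| i_{t,\theta}^*\omega \|^2_{L^2(\Sigma, g_\Sigma)}
= \| i_{t,\theta}^*\omega \|^2_{L^2(\Sigma, g_\Sigma)}.
\]

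Next I would compare the $L^2$-norm of the pullback with that of $\omega$ itself. Because the product metric on $[0,R] \times S^1 \times \Sigma$ makes $i_{t,\theta}$ a Riemannian isometric embedding onto its image with the induced metric equal to $g_\Sigma$, the pointwise norm of $i_{t,\theta}^*\omega$ at $p \in \Sigma$ is at most the pointwise norm of $\omega$ at $(t,\theta,p)$. Hence
\[
\| i_{t,\theta}^*\omega \|^2_{L^2(\Sigma, g_\Sigma)}
\leq \int_\Sigma |\omega(t,\theta,p)|_h^2 \, \mathrm{vol}_\Sigma(p).
\]

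Finally, integrating the previous inequality over $(t,\theta) \in [0,R] \times S^1$ and applying Fubini's theorem for the product metric gives
\[
R \cdot | [\omega] \cdot [\Sigma] |^2
\leq \int_0^R \! \int_{S^1} \! \int_\Sigma |\omega|_h^2 \, \mathrm{vol}_\Sigma\, d\theta\, dt
= \| \omega \|_{L^2(X', h)}^2,
\]
since $S^1$ has unit length. Dividing by $R$ yields the desired estimate \eqref{eq: key estimate}. There is no real obstacle here; the only point requiring minor care is justifying that the slices $\Sigma_{t,\theta}$ are all homologous (which is automatic from the product structure of $X'$ and the embedding into $X$), and that the pointwise norm of the pullback of $\omega$ is bounded by that of $\omega$, both of which are standard.
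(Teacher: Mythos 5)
Your proof is correct and follows the same route the paper points to: slice the cylinder, apply Cauchy--Schwarz on each slice using the unit-area normalization of $g_\Sigma$, bound the pointwise norm of the pullback by that of $\omega$, and integrate over $[0,R]\times S^1$ via Fubini. This is precisely the Kronheimer--Mrowka Lemma~4 argument the author cites, so there is nothing to add.
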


\begin{proof}
By Fubini's theorem, we have
\begin{align*}
\|\omega\|^2_{L^2(X', h)}
& = \int_{[0,R] \times S^1\times {\Sigma}} |\omega|^2 d\mu_{[0,R] \times S^1\times {\Sigma}}\\
& = \int_{[0,R] \times S^1} \left(\int_{\{(t,\theta)\}\times{\Sigma}} |\omega|^2 d\mu_{\{(t,\theta)\}\times\Sigma}\right)d\mu_{[0,R]\times S^1},
\end{align*}
where $d\mu$ denotes the volume form.
Note that $\Sigma$ and $\{(t,\theta)\}\times{\Sigma}$ have the same homology class in $X$.
Therefore, by the Cauchy--Schwarz inequality, we obtain
\begin{align*}
\Area({\Sigma})\cdot\int_{\{(t,\theta)\}\times{\Sigma}}|\omega|^2d\mu_{\{(t,\theta)\}\times\Sigma}\geq\left|\int_{\{(t,\theta)\}\times{\Sigma}}\omega\right|^2=\left|\int_{{\Sigma}}\omega\right|^2.
\end{align*}
Therefore
\begin{align*}
\|\omega\|^2_{L^2(X', h)}
& \geq \frac{1}{\Area({\Sigma})}\int_{[0,R]\times S^1}\left|\int_{{\Sigma}}\omega\right|^2d\mu_{[0,R]\times S^1}\\
&= \frac{\Area([0,R]\times S^1)}{\Area({\Sigma})}\left|\int_{{\Sigma}}\omega\right|^2
\end{align*}
holds.
Since $\Area([0,R]\times S^1)=R$ and $\Area(\Si)=1$, this proves the lemma.
\end{proof}

For a cohomology class $c \in H^2(X; \R)$ and a metric $h$ on $X$, we denote by $\calH_h(c)$ the harmonic representative of $c$ with respect to $h$.
We denote  by $s(h) : X \to \R$ the scalar curvature of $h$.
Let us define $\kappa(h) : X \to \R$ by
\[
\kappa(h) := \max\{ -s(h), 0 \}.
\]
The following \cref{lem: L^2-a priori} is essentially proven
as Lemma~3 in Kronheimer--Mrowka~\cite{MR1492131}, while in \cite{MR1492131} the \SW equations on $3$-manifolds were considered.

\begin{lem}[Kronheimer--Mrowka~\cite{MR1492131}]
\label{lem: L^2-a priori}
If there exists a solution to the \SW equations with respect to a metric $h$, then the inequality
\begin{align}
\|\calH_h(c_1(\fraks))\|_{L^2(X, h)}^2
\leq \frac{\| \kappa(h) \|_{L^2(X, h)}^2}{(4\pi)^2} - \int_X c_1(\fraks)^2
\label{eq: char-estimate}
\end{align}
holds.
\end{lem}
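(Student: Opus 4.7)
The plan is the standard Weitzenböck-formula argument that originates in Witten's paper and appears in Kronheimer--Mrowka~\cite{MR1492131}. Let $(A, \Phi)$ be a solution to the \SW equations with respect to $h$. First I would invoke the Weitzenböck identity for the Dirac operator,
\[
D_A^{\ast} D_A \Phi = \nabla_A^{\ast}\nabla_A \Phi + \frac{s(h)}{4}\Phi + \frac{1}{2}\rho(F_A^+)\Phi,
\]
take the $L^2$ inner product with $\Phi$, and use $D_A\Phi = 0$ together with the curvature equation $\rho(F_A^+) = \sigma(\Phi,\Phi)$ and the algebraic identity $\langle \sigma(\Phi,\Phi)\Phi, \Phi\rangle = \tfrac{1}{2}|\Phi|^4$ to obtain
\[
\int_X |\nabla_A\Phi|^2 \, dv_h \;+\; \frac{1}{4}\int_X s(h)|\Phi|^2 \, dv_h \;+\; \frac{1}{4}\int_X |\Phi|^4 \, dv_h \;=\; 0.
\]

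Next I would drop the nonnegative Dirichlet term, replace $-s(h)$ by $\kappa(h)$, and apply the Cauchy--Schwarz inequality to the cross term $\int \kappa(h)|\Phi|^2$. This yields the $L^4$-bound $\| |\Phi|^2\|_{L^2(X,h)}^2 = \int_X |\Phi|^4 \le \|\kappa(h)\|_{L^2(X,h)}^2$. Combining this with the pointwise identity $|F_A^+|^2 = \tfrac{1}{16}|\Phi|^4$ (from $|\rho(\omega)|^2 = 2|\omega|^2$ for $\omega \in \Lambda^+$ and $|\sigma(\Phi,\Phi)|^2 = \tfrac{1}{8}|\Phi|^4$ in the standard normalisation) converts the spinor estimate into a curvature estimate of the form $\|F_A^+\|_{L^2(X,h)}^2 \le c\, \|\kappa(h)\|_{L^2(X,h)}^2$ for an explicit numerical constant $c$.

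Finally I would translate this curvature estimate into an estimate on $\calH_h(c_1(\fraks))$ via Hodge theory. Since $\tfrac{i}{2\pi}F_A$ is a closed form representing $c_1(\fraks)$, write $\tfrac{i}{2\pi}F_A = \calH_h(c_1(\fraks)) + d\alpha$. Using the two identities
\[
\|\omega^+\|_{L^2}^2 - \|\omega^-\|_{L^2}^2 = \int_X \omega \wedge \omega, \qquad \|\omega\|_{L^2}^2 = \|\calH_h(c_1(\fraks))\|_{L^2}^2 + \|d\alpha\|_{L^2}^2,
\]
applied to $\omega = \tfrac{i}{2\pi}F_A$, together with the same identities applied to the harmonic form $\calH_h(c_1(\fraks))$ itself, one derives $\|\calH_h(c_1(\fraks))^{\pm}\|_{L^2}^2 \le \|\omega^{\pm}\|_{L^2}^2$ and consequently
\[
\|\calH_h(c_1(\fraks))\|_{L^2(X,h)}^2 \;\le\; 2\left\|\tfrac{i}{2\pi}F_A^+\right\|_{L^2(X,h)}^2 - \int_X c_1(\fraks)^2.
\]
Inserting the curvature bound from the previous step gives \eqref{eq: char-estimate}.

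The argument is essentially routine once the Weitzenböck identity and the algebraic normalisations are in place; I do not anticipate a genuine obstacle. The only point requiring care is bookkeeping of the numerical constants (in the definitions of $\rho$, $\sigma$, and the Chern--Weil representative $\tfrac{i}{2\pi}F_A$) so that the final factor in front of $\|\kappa(h)\|_{L^2}^2$ comes out to exactly $1/(4\pi)^2$.
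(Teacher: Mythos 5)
Your plan reproduces the paper's own two-step argument: the Weitzenb\"ock $L^4$ estimate $\int_X |\Phi|^4 \le \|\kappa(h)\|_{L^2(X,h)}^2$ (this is the content of Lemma~3 of Kronheimer--Mrowka, which the paper cites), followed by the Hodge-theoretic conversion through the identity $\|\omega\|_{L^2}^2 = 2\|\omega^+\|_{L^2}^2 - \int_X c_1(\fraks)^2$ for a representative $\omega$ of $c_1(\fraks)$ together with $\|\calH_h(c_1(\fraks))^+\|_{L^2}^2 \le \|\omega^+\|_{L^2}^2$. This is exactly the route the paper indicates, so there is no structural discrepancy to report.

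One correction on the constant bookkeeping you flagged. With the paper's normalisation $\sigma(\Phi,\Phi) = \Phi\otimes\Phi^\ast - \tfrac{|\Phi|^2}{2}\id$, the correct pointwise identity is $|F_A^+|^2 = \tfrac{1}{8}|\Phi|^4$, not $\tfrac{1}{16}|\Phi|^4$; the auxiliary normalisations you quote, $|\rho(\omega)|^2 = 2|\omega|^2$ and $|\sigma(\Phi,\Phi)|^2 = \tfrac{1}{8}|\Phi|^4$, are not mutually compatible with the paper's $\sigma$ (the eigenvalues of $\sigma(\Phi,\Phi)$ are $\pm\tfrac{|\Phi|^2}{2}$, so $|\sigma|^2$ cannot be as small as $\tfrac{1}{8}|\Phi|^4$ in any reasonable Lie-algebra norm). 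Carrying your $\tfrac{1}{16}$ through, combined with $\|\omega^+\|_{L^2}^2 = \tfrac{1}{4\pi^2}\|F_A^+\|_{L^2}^2$, yields the coefficient $\tfrac{1}{32\pi^2}$, which is a factor of two \emph{smaller} than the target $\tfrac{1}{(4\pi)^2} = \tfrac{1}{16\pi^2}$; so the claimed lemma would still follow, but you would not land ``exactly'' on $1/(4\pi)^2$ as you intend. Replacing $\tfrac{1}{16}$ by $\tfrac{1}{8}$ in your pointwise identity gives the paper's constant on the nose.
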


\begin{proof}
Write $\kappa = \kappa(h), s=s(h)$ and $L^2=L^2(X,h)$.
Let $(A,\Phi)$ be a solution to the \SW equations with respect to $h$ and $\fraks$.
A standard argument combining the \SW equations with the Weitzenb\"{o}ck formula gives the equality
\begin{align*}
\int_{X}|\nabla_{A}\Phi|^{2}+\frac{1}{4}\int_{X}s|\Phi|^{2}+\frac{1}{4}\int_{X}|\Phi|^{4}=0,
\end{align*}
and hence an inequality
\begin{align*}
\int_{X}|\Phi|^{4} \leq -\int_{X}s|\Phi|^{2}.
\end{align*}
This combined with the Cauchy--Schwarz inequality implies that
\begin{align}
\int_{X}|\Phi|^{4} \leq \int_{X}\kappa^{2}.
\label{L^2}
\end{align}
%
Rewriting $F_{A}^{+}$ in terms of $\Phi$ using the \SW equations, we have
\begin{align}
\label{eq: FA SW}
|F_A^+| \leq \frac{|\sigma(\Phi,\Phi)|}{2} \leq \frac{|\Phi|^2}{2\sqrt{2}}
\end{align}
at each point.
Now an $L^2$-a priori estimate for curvatures
\begin{align}
\| F_{A}^{+} \|_{L^2} \leq \frac{\| \kappa \|_{L^2}}{2\sqrt{2}}
\label{L^2-estimate}
\end{align}
follows from \eqref{L^2} and \eqref{eq: FA SW}. 

Set $c= c_{1}(\fraks)$.
Take a representative $\omega \in \Omega^2(X, \R)$ of $c$.
We have 
\[
\|\omega\|_{L^2}^2 = \|\omega^+\|_{L^2}^2 + \|\omega^-\|_{L^2}^2
\]
and 
\begin{align*}
\int_X c^2
= \int_X \omega \wedge \omega
= \|\omega^+\|_{L^2}^2 - \|\omega^-\|_{L^2}^2,
\end{align*}
hence
\begin{align}
\|\omega\|_{L^2}^2 = 2\|\omega^+\|_{L^2}^2-\int_X c^2
\label{F_A}
\end{align}
holds.
In addition, we have
\begin{align}
\|\calH_h(c)\|_{L^2} \leq \|\omega\|_{L^2}
\label{c-F_A}
\end{align}
since the $h$-harmonic form minimizes $L^2_h$-norm in the forms having the same cohomology class.
Since we can take $\omega$ as $\omega = (-1/2\pi i)F_A$, from (\ref{L^2-estimate}), (\ref{F_A}) and (\ref{c-F_A}), we obtain
\begin{align*}
\|\calH_h(c)\|_{L^2}^{2}
\leq \|\omega\|_{L^2}^{2}
&= 2\|\omega^+\|_{L^2}^2-\int_X c^2\\
&= \frac{1}{2\pi^2}\|F_A^+\|_{L^2}^2-\int_X c^2\\
&\leq \frac{\| \kappa \|_{L^2}^2}{16\pi^2}-\int_X c^2.
\end{align*}
This proves the lemma.
\end{proof}

Henceforth in this subsection, we fix a metric $g \in \Met(X)$.
We also fix $N(\cdot)$, $a(\cdot)$ as in \cref{subsection: Construction of a family of Riemannian metrics}.
For a simplex $\si$ in $\calK$,
we define
\[
\lambda(\si) = \lambda(\si, a(\cdot)) := \min_{\substack{ s \in S(\Bd(\si)),\\ \scrS \in S(\Bd(s))}}\Set{ \lambda(\scrS, \bvec{t}) | \substack{\bvec{t} = \{t_j\}_{j=0}^{\dim{\scrS}},\\ \sum_{j=0}^{\dim \scrS} t_j = 1, \\ t_j \geq 0}}
\]
and
\[
C(\si) = C(\si, g, N(\cdot), a(\cdot)) := \max_{h \in \phi_{\si}(|\si|)} \frac{\| \kappa(h) \|_{L^2(X, h)}^2}{(4\pi)^2} - \int_X c_1(\fraks)^2,
\]
where we regard
\[
|\si| = \bigcup_{s \in S(\Bd(\si))} |s| \times \{0\}^{\simin{s}}.
\]
These real numbers $\lambda(\si, a(\cdot))$ and $C(\si, g, N(\cdot), a(\cdot))$ depend only on $\si$ and $a(\cdot)$, and on $\si$, $g, N(\cdot)$, and $a(\cdot)$ respectively
since we have fixed the $\spc$ structure $\fraks$ in this subsection.
Note that $\lambda(\si) > 0$ holds.
We also note that $|C(\si)|<\infty$ holds since $\phi_{\si}(|\si|)$ is compact and the map $h \mapsto \| \kappa(h) \|_{L^2(X, h)}^2$ is continuous.
Set
\[
\bar{R}(\si) = \bar{R}(\si, g, N(\cdot), a(\cdot)) := \max\Set{\frac{C(\si, g, N(\cdot), a(\cdot))}{\lambda(\si, a(\cdot))}, 0 }
\]
and
\[
R(\si) = R(\si, g, N(\cdot), a(\cdot)) := \max_{\tau \prec \si} \bar{R}(\tau, g, N(\cdot), a(\cdot)).
\]

\begin{prop}
\label{prop: vanishing on certain face}
Let $\si$ be a simplex of $\calK$.
Then, for any $R \geq \bar{R}(\si)$, there is no solution to the unperturbed \SW equations for any metric in $\phi_{\si}(\frakextdel(\si, R))$.
\end{prop}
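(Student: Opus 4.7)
The plan is a contradiction argument combining the cylinder inequality \cref{lem: cylinder} with the a priori bound \cref{lem: L^2-a priori}. Suppose that for some $R \geq \bar{R}(\si)$ and some point $(p, \{r_\Si\}) \in \frakextdel(\si, R)$ the metric $h := \phi_\si(p, \{r_\Si\})$ admits a solution to the unperturbed \SW equations. First I would unpack the definition of $\frakextdel(\si, R)$: there exist $s \in S(\Bd(\si))$, a vertex $\Si_0 \in \simin{s}$ with $r_{\Si_0} = R$, and a carrier simplex $\scrS \in S(\Bd(s))$ containing $p$ in its interior. Writing $p = \sum_{j} t_j s_j$ in barycentric coordinates and setting $\lambda := \lambda(\scrS, \bvec{t})$, the definition of $\lambda(\si)$ gives $\lambda \geq \lambda(\si)$.

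Next I would invoke \cref{lem: cylindrical structure for any point on the family} at this point to identify $(\bar{X}(\lambda, \Si_0), h)$ with the product cylinder $\barCystd(\lambda, \Si_0, R)$ of $t$-length $\Lambda(\lambda, R) > \lambda(R+1) \geq \lambda(\si)\,R$, and then apply \cref{lem: cylinder} with the harmonic representative $\omega = \calH_h(c_1(\fraks))$ on this cylinder to obtain
\[
|c_1(\fraks) \cdot [\Si_0]|^2 \leq \frac{\|\calH_h(c_1(\fraks))\|_{L^2(\bar{X}(\lambda, \Si_0), h)}^2}{\Lambda(\lambda, R)} \leq \frac{\|\calH_h(c_1(\fraks))\|_{L^2(X, h)}^2}{\lambda(\si)\, R}.
\]
Combining with \cref{lem: L^2-a priori} and the bound (to be established) $\|\kappa(h)\|_{L^2(X, h)}^2/(4\pi)^2 - \int_X c_1(\fraks)^2 \leq C(\si)$ yields $|c_1(\fraks) \cdot [\Si_0]|^2 \leq C(\si)/(\lambda(\si)\, R) \leq C(\si)/(\lambda(\si)\, \bar{R}(\si)) = 1$, and the strict inequality $\Lambda(\lambda, R) > \lambda(\si)\, R$ upgrades this to $|c_1(\fraks) \cdot [\Si_0]|^2 < 1$. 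Since $\Si_0 \in V(\calK)$ forces $|c_1(\fraks) \cdot [\Si_0]| > \Euler^-(\Si_0) \geq 0$, and $c_1(\fraks) \cdot [\Si_0] \in \Z$, one gets $|c_1(\fraks) \cdot [\Si_0]| \geq 1$, the required contradiction.

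The hard part will be justifying the comparison inequality $\|\kappa(h)\|_{L^2(X, h)}^2/(4\pi)^2 - \int_X c_1(\fraks)^2 \leq C(\si)$ for the \emph{stretched} metric $h$, since $C(\si)$ is defined only as a maximum over the unstretched subfamily $\phi_\si(|\si|) \subset \Met(X)$. The argument will have to exploit the product-isometric structure of the stretched cylindrical regions supplied by \cref{lem: cylindrical structure for any point on the family}: the scalar curvature on each stretched cylinder is the same constant $s(g_{\Si_0})$ as on the corresponding unstretched cylinder, so its $\|\kappa\|^2$-contribution can be traded off against that of a suitable unstretched metric over which $C(\si)$ is maximized. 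Beyond this step, the proof is a direct application of \cref{lem: cylindrical structure for any point on the family}, \cref{lem: cylinder}, and \cref{lem: L^2-a priori}, together with the defining formulas for $\lambda(\si)$, $C(\si)$, and $\bar{R}(\si)$.
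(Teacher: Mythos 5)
Your proposal correctly identifies the three ingredients---\cref{lem: cylindrical structure for any point on the family}, \cref{lem: cylinder}, and \cref{lem: L^2-a priori}---and the overall contradiction setup, but the step you flag as ``the hard part'' is not merely hard: the claimed inequality is false, and the trade-off you sketch to repair it does not work.

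The bound $\|\kappa(h)\|_{L^2(X, h)}^2/(4\pi)^2 - \int_X c_1(\fraks)^2 \leq C(\si)$ cannot hold for the stretched metric $h$ as soon as some $\Si \in \simin{s}$ has $\Euler^-(\Si) > 0$ (genus $\geq 2$). On the stretched cylinder $\bar{X}(\lambda, \Si)$ the scalar curvature is the negative \emph{constant} $-4\pi\Euler^-(\Si)$, and the cylinder has $t$-length $r'_{\Si} = \Lambda(\lambda, r_{\Si})$, so its contribution to $\|\kappa(h)\|_{L^2}^2$ is exactly $(4\pi)^2 r'_{\Si}\Euler^-(\Si)^2$, which grows linearly and without bound in $r_{\Si}$. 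Your observation that the scalar curvature constant is the same before and after stretching is true but beside the point: the $L^2$-norm integrates that constant over the \emph{longer} cylinder, so the stretched contribution strictly exceeds the unstretched one, and maximizing over the unstretched family $\phi_{\si}(|\si|)$ cannot absorb it. (Your argument would go through if all surfaces in $\simin{s}$ were tori or spheres, since then $\kappa \equiv 0$ on the cylinders, but not in general.)

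The paper does not try to dominate $\|\kappa(h)\|_{L^2}^2$ by a constant. Instead it squeezes the defining condition of $\calK$ over the integers to $\Euler^-(\Si)^2 + 1 \leq |c_1(\fraks)\cdot[\Si]|^2$ and applies \cref{lem: cylinder} to \emph{every} $\Si \in \simin{s}$, not just the one with $r_{\Si_0} = R$. Summing and invoking \cref{lem: L^2-a priori} gives
\[
\sum_{\Si} r'_{\Si}\Euler^-(\Si)^2 + \sum_{\Si} r'_{\Si} \;\leq\; \frac{\|\kappa(h)\|_{L^2(X,h)}^2}{(4\pi)^2} - \int_X c_1(\fraks)^2,
\]
and then the decomposition $\|\kappa(h)\|_{L^2}^2 = \sum_{\Si}(4\pi)^2 r'_{\Si}\Euler^-(\Si)^2 + \|\kappa(\phi_{\si}(p,\{0\}))\|_{L^2(X')}^2$ makes the $r'_{\Si}\Euler^-(\Si)^2$ terms cancel exactly between the two sides. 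What survives is $\sum_{\Si} r'_{\Si} \leq C(\si)$, which contradicts $\sum_{\Si} r'_{\Si} > \lambda(\si)(R+1) > \lambda(\si)\bar{R}(\si) \geq C(\si)$. So the contradiction is carried by the unbounded growth of $\sum_{\Si} r'_{\Si}$ supplied by the ``$+1$'' in the integer-strengthened adjunction inequality, not by an estimate $|c_1(\fraks)\cdot[\Si_0]|^2 < 1$; that final step of yours rests on a premise that fails.
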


\begin{proof}
We argue by contradiction.
Assume that for some non-negative number $R \geq \bar{R}(\si)$ and some metric $h \in \phi_{\si}(\frakextdel(\si, R))$, there exists a solution to the unperturbed \SW equations for $h$.
For this $h$, there exist 
\begin{align*}
s \in S(\Bd(\si)),\ l \geq 0,\ \scrS = \{ s_0, \cdots, s_l\} \in S(\Bd(s)),\\ 
\{r_{\Si}\}_{\Si \in \simin{s}} \in \frakextdel(\si, s, R),\ \bvec{t} = \{t_j\}_{j=0}^l
\end{align*}
such that $t_j \geq 0$, $\sum_{j=0}^l t_j=1$, and $h = \phi_{\si}(p, \{r_{\Si}\})$, where $p = \sum_{j=0}^l t_j s_j$.
From \cref{lem: cylindrical structure for any point on the family}, we have
\begin{align}
\bigsqcup_{\Si \in \simin{s}} (\bar{X}(\lambda(\scrS, \bvec{t}), \Si), \phi_{\si}(p, \{r_{\Si}\})) \cong \bigsqcup_{\Si \in \simin{s}} \barCystd(\lambda(\scrS, \bvec{t}), \Si, r_{\Si}).
\label{eq: isometry in proof}
\end{align}
Here $\cong$ means an isometry.
We set $r'_{\Si} := \Lambda(\lambda(\scrS, \bvec{t}), r_{\Si})$.
Since $\Euler^-(\Si)^2 + 1 \leq |c_1(\fraks) \cdot [\Si]|^2$ holds for each $\Si \in \simin{s}$, we have
\begin{align}
 \sum_{\Si \in \simin{s}} r'_{\Si} \cdot \Euler^-(\Si)^2 + \sum_{\Si \in \simin{s}} r'_{\Si}
\leq & \sum_{\Si \in \simin{s}} r'_{\Si} \cdot |c_1(\fraks) \cdot [\Si]|^2  \label{eq: 1st inequality in prop}\\
\leq & \sum_{\Si \in \simin{s}} \|\calH_h(c_1(\fraks))\|_{L^2(\bar{X}(\lambda(\scrS, \bvec{t}), \Si), h)}^2 \nonumber\\
\leq & \|\calH_h(c_1(\fraks))\|_{L^2(X, h)}^2 \nonumber\\
\leq & \frac{\| \kappa(h) \|_{L^2(X, h)}^2}{(4\pi)^2} - \int_X c_1(\fraks)^2,\nonumber
\end{align}
where the second and the last inequality follow from \cref{lem: cylinder} and \cref{lem: L^2-a priori} respectively.
Set $X' = X \setminus \bigsqcup_{\Si \in \simin{s}} \bar{X}(\lambda(\scrS, \bvec{t}), \Si)$ and $\{0\} = \{0\}^{\simin{s}}$.
We also have
\begin{align}
&\| \kappa(h) \|_{L^2(X, h)}^2 \label{eq: 2nd inequality in prop}\\
= & \sum_{\Si \in \simin{s}}\| \kappa(h) \|_{L^2(\bar{X}(\lambda(\scrS, \bvec{t}), \Si), h)}^2 + \| \kappa(h) \|_{L^2(X', h)}^2 \nonumber\\
= & \sum_{\Si \in \simin{s}} (4\pi)^2 r'_{\Si} \cdot \Euler^-(\Si)^2 +  \| \kappa(\phi_{\si}(p, \{0\})) \|_{L^2(X', \phi_{\si}(p, \{0\}))}^2 \nonumber\\
\leq &  \sum_{\Si \in \simin{s}} (4\pi)^2 r'_{\Si} \cdot \Euler^-(\Si)^2 +  \| \kappa(\phi_{\si}(p, \{0\})) \|_{L^2(X, \phi_{\si}(p, \{0\}))}^2. \nonumber
\end{align}
From \eqref{eq: 1st inequality in prop} and \eqref{eq: 2nd inequality in prop}, we deduce
\[
\sum_{\Si \in \simin{s}} r'_{\Si}
\leq \frac{\| \kappa(\phi_{\si}(p, \{0\}) \|_{L^2(X, (\phi_{\si}(p, \{0\}))}^2}{(4\pi)^2} - \int_X c_1(\fraks)^2
\leq C(\si).
\]
However, since we also have
\begin{align*}
\sum_{\Si \in \simin{s}} r'_{\Si}
\geq \max_{\Si \in \simin{s}} r_{\Si}'
&> \lambda(\scrS, \bvec{t})\left( \max_{\Si \in \simin{s}}r_{\Si} + 1\right)\\
&\geq \lambda(\si)(R+1)
> \lambda(\si)\bar{R}(\si),
\end{align*}
it follows that $\bar{R}(\si) < C(\si)/\lambda(\si)$.
This contradicts the definition of $\bar{R}(\si)$.
\end{proof}

\section{Construction of the invariant I\hspace{-.1em}I:  Perturbations and counting arguments}

\label{section: Construction of the cohomological invariant}

In this section we complete the construction of the cohomological invariant.
We consider the moduli spaces of the \SW equations parameterized by the families of Riemannian metrics given in \cref{section: Family of Riemannian metrics}; we construct a cohomology class by counting the points of the parameterized moduli spaces.

\subsection{Definition of the invariant in a simple case}

\label{subsection: Definition of the invariant}

Let $X$ be a smooth oriented closed connected $4$-manifold equipped with a homology orientation, $\fraks$ be a $\spc$ structure on $X$, and $n$ be the integer given by $d(\fraks) = -(n+1)$.
We assume that $n \geq 0$ and that
$b^+(X) \geq n+3$.
(These assumptions will be needed since we will use $(n+1)$-parameter families in order to define the cohomological invariant.)
We also assume that $V(\calK(X, \fraks)) \neq \emptyset$.
In this subsection we define the cohomological invariant
\[
\SWcoh(X, \fraks) \in H^{n}(\calK(X, \fraks);\Z)
\]
under this setting.
Although we will consider more general $\spc$ structures in \cref{subsection: Definition of the invariant for general spinc structures} using what is called $\mu$-maps, the case which we treat in this subsection is the most basic one; we can give a non-trivial example for this case in \cref{section: Non-vanishing results for the cohomological invariant}.

We fix a metric $g \in \Met(X)$ and data $N(\cdot)$, $a(\cdot)$ as explained in \cref{subsection: Construction of a family of Riemannian metrics} throughout this subsection.
We also fix certain kinds of perturbations $\vp_{\bullet}$ explained below to define a cochain on $\calK$.
While in \cref{Vanishing property of the \SW moduli space on a family} we have considered only the unperturbed \SW equations,
we now need the perturbed ones.
Fix an integer $l \geq 2$.
The space of perturbations $\Pi = \Pi(X, \fraks)$ is given by
\begin{align}
\Pi(X, \fraks) := \Set{ (h, \mu) \in \Met(X) \times i\Omega^2 | \mu \in i\Omega^+_h } = \bigsqcup_{h \in \Met(X)}i\Omega^+_h,
\label{eq def of the space of perturbations}
\end{align}
where $\Omega^+_h$ is the completion by the $L^{2}_{l-1}$-Sobolev norm defined by $h$ of the space of self-dual $2$-forms with respect to $h$.
(We here omit ``$L^{2}_{l-1}(\cdot)$'' from our notation for simplicity.)
The space $\Pi \to \Met(X)$ is a Hilbert bundle over $\Met(X)$.
Let us identify $\Met(X)$ with the zero-section of this bundle,
and denote by $\Pi_{h}$ the fiber $i\Omega^+_h$ of this bundle $\Pi$ at $h \in \Met(X)$.
Fix a smooth reference connection $A_0$ of the determinant line bundle.
We call the subset of perturbations $\calW = \calW(X, \fraks) \subset \Pi(X, \fraks)$ defined by
\begin{align}
\calW(X, \fraks) = \bigsqcup_{h \in \Met(X)}\calW(X, \fraks)_h, \quad
\calW(X, \fraks)_h := F_{A_0}^{+_h} + i\im{d^{+_h}} \subset i\Omega^+_h
\label{eq def of wall}
\end{align}
the {\it wall}.
We define $\circPi = \bigsqcup_{h \in \Met(X)} \circPi_h$ by
\[
\circPi_h := \Pi_h \setminus \calW_h.
 \]
The wall $\calW$ is of codimension-$b^+(X)$ in $\Pi$ since $\Omega^+_h/\im(d^{+_h}) \cong H^+(X;\R)$ for each $h$.
Recall that, for $(h, \mu) \in \Pi$, the perturbed \SW equations with respect to $(h, \mu)$ have a reducible solution if and only if $(h, \mu) \in \calW$.

For $k \in \{-1, 0, \ldots, n\}$, we now define maps
\begin{align}
\Set{ \vp_{\si} : [0, R(\si)]^{\si} \to \circPi }_{\si \in S_{k}(\calK)}
\label{eq: family of perturbations}
\end{align}
inductively on $k$ as follows.
Here we define $S_{-1}(\calK)$ as $\{\emptyset\}$; we regard $\vp_{\emptyset}$ as a map $\vp_{\emptyset} : \{{\rm pt}\} \to \circPi$.
In the case when $k = -1$, take a generic point $\ast$ in $\circPi$ so that the moduli space for the perturbed \SW equations with respect to $\ast$ is empty.
(Recall that the formal dimension $d(\fraks)$ is assumed to be negative.)
Then we can define $\vp_{\emptyset} : \{{\rm pt}\} \to \circPi$ by $\vp_{\emptyset}({\rm pt}) = \ast$.
Next we define $\vp_{\bullet}$ in the case that $k = 0$.
Let $\{\Si\}  \in S_0(\calK)$.
We here simply write it $\Si$.
Note that there is no solution to the unperturbed \SW equations for the metric $\phi_{\Si}(|\{\Si\}| \times \{R(\Si)\})$ from \cref{prop: vanishing on certain face}.
Take a generic path from $\ast$ to $\phi_{\Si}(|\{\Si\}| \times \{R(\Si)\})$ in $\circPi$ so that the parameterized moduli space with respect to this path is smooth.
(Now we regard $\Met(X)$ as a subset of $\Pi$.)
Here, if $d(\fraks)<1$, the smoothness of the parameterized moduli space means just that it is empty.
This construction gives a map
\[
\vp_{\Si} : [0, R(\Si)]^{\{\Si\}} = [0, R(\Si)] \to \circPi.
\]
Note that $\vp_{\Si}|_{\{0\}} = \vp_{\emptyset}$ obviously holds.
Next, for $k \in \{1, \ldots, n\}$ assume that $\vp_{\tau}$ is already constructed for any $l \leq k-1$ and any $\tau \in S_l(\calK)$ and that the restriction $\vp_{\tau}|_{\extdel(\tau, R(\tau))}$ coincides with the composition
\begin{align*}
\phi_{\tau} \circ \psi_{\tau, R(\tau)}^{-1} : \extdel(\tau, R(\tau)) \cong \frakextdel(\tau, R(\tau)) \to \circPi.
\end{align*}
Here $\psi_{\tau, R(\tau)}$ is the homeomorphism given in \cref{prop: structure of the subspace of the domain of phi}.
We note that the image of this map $\phi_{\tau} \circ \psi_{\tau, R(\tau)}^{-1}$ is a priori contained in $\Met(X)$, however in fact in $\circPi$.
Indeed, from \cref{prop: vanishing on certain face}, there is no solution to the unperturbed \SW equations for any metric in the image of the map $\phi_{\tau} \circ \psi_{\tau, R(\tau)}^{-1}$.
In particular, reducible solutions do not appear on the image, therefore it is contained in $\circPi$.
(Here we regard $\Met(X)$ as a subset of $\Pi$.)
We shall construct $\vp_{\bullet}$ for $k$-simplices under these assumptions.
Let $\si \in S_k(\calK)$.
Note that $[0, R(\si)]^{\si}$ decomposes as
\begin{align}
&[0, R(\si)]^{\si} \label{eq: decomposition of the cube to interior and boundary} \\
= & \del{\left([0, R(\si)]^{\si}\right)} \cup (0, R(\si))^{\si} \nonumber \\
= &
\extdel(\si, R(\si))
\cup
 \bigcup_{\substack{\tau \prec \si, \\ \dim\tau = k-1}} [0, R(\si)]^{\tau}
\cup
(0, R(\si))^{\si} \nonumber \\
= &
\extdel(\si, R(\si))
\cup
 \bigcup_{\substack{\tau \prec \si, \\ \dim\tau = k-1}} \left( [0, R(\tau)]^{\tau} \cup \bigsqcup_{R(\tau) \leq R \leq R(\si)} \extdel(\tau, R) \right)
\cup
(0, R(\si))^{\si}.  \nonumber
\end{align}
Here we regard $[0,R(\tau)]^{\tau}$ as a subset of $[0,R(\si)]^{\si}$ by inserting some factors $\{0\}$ in appropriate places.
We take a map $\vp_{\si}$ satisfying following three conditions:
\begin{cond}
\label{cond: restriction of perturbations}
$\quad$
\begin{enumerate}
\item
\label{item: extboundary}
On $\extdel(\si, R(\si))$, the map $\vp_{\si}$ coincides with the composition
\[
\phi_{\si} \circ \psi_{\si, R(\si)}^{-1} : \extdel(\si, R(\si)) \to \circPi.
\]

\item 
\label{item: interior boundary}
For any face $\tau \prec \si$ with $\dim \tau = k-1$,
\begin{enumerate}
\item on $[0, R(\tau)]^{\tau}$, the map $\vp_{\si}$ coincides with $\vp_{\tau} : [0, R(\tau)]^{\tau} \to \circPi$, and
\item on $\bigsqcup_{R(\tau) \leq R \leq R(\si)} \extdel(\tau, R)$, the map $\vp_{\si}$ coincides with the 
composition 
\[
\phi_{\tau} \circ \left( \bigsqcup_{R(\tau) \leq R \leq R(\si)} \psi_{\tau, R}^{-1} \right) : \bigsqcup_{R(\tau) \leq R \leq R(\si)} \extdel(\tau, R) \to \circPi.
\]
\label{eq: composition of disjoint maps}
\end{enumerate}

\item The restriction of $\vp_{\si}$ to $(0, R(\si))^{\si}$ is generic in the sense that the parameterized moduli space with respect to $\vp_{\si}$ is smooth over $(0, R(\si))^{\si}$.
\label{item: genericity in interior}
\end{enumerate}
\end{cond}

By the construction of the homeomorphism given in \cref{prop: structure of the subspace of the domain of phi}, the map in the case of \eqref{eq: composition of disjoint maps} above is continuous.
The maps in other cases are obviously continuous, and so is the whole map $\vp_{\si}$.
Continuity of $\vp_{\si}$ and the compactness of $[0, R(\si)]^{\si}$ imply that the parameterized moduli space, defined soon later, with respect to $\vp_{\si}$  is compact.

The procedure above can be continued to $k = n+1$ since the codimension of $\calW$ is $b^{+}$ and we have assumed that $b^{+} > n+2$; however, we stop it  at $k = n$.
(In the subsequent subsections, we will use cobordism arguments, and therefore consider $(n+2)$-parameter families.
Our hypothesis $b^{+} \geq n+3$ will be needed in those arguments.)
We eventually obtain perturbations \eqref{eq: family of perturbations} for each $k \in \{-1, 0, \ldots, n\}$.

\begin{lem}
\label{lem: vanishing on boundary}
Let $\si \in S_{k}(\calK)$ be a simplex of $\calK$ with $k \in \{0, \ldots, n\}$.
Then there is no solution to the \SW equations for any element in $\vp_{\si}\mleft( \del{\left([0, R(\si)]^{\si}\right)} \mright)$.
There is also no solution to the \SW equations for $\vp_{\emptyset}({\rm pt})$.
\end{lem}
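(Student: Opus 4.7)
I would prove the lemma by induction on $k$, along the way strengthening the claim: for $\si \in S_k(\calK)$ with $k<n$ I will in fact show that there are no solutions on the entire cube $[0,R(\si)]^\si$, not only on its boundary. The final statement about $\vp_\emptyset(\mathrm{pt}) = \ast$ is immediate from the choice of $\ast$ as a generic point in $\circPi$: since the formal dimension of the unparameterized moduli space is $d(\fraks) = -(n+1) < 0$, it is empty for a generic parameter.

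For the base case $k=0$, write $\si=\{\Si\}$. At the endpoint $0$, $\vp_\Si(0)=\vp_\emptyset(\mathrm{pt})=\ast$ carries no solutions by the above. At the endpoint $R(\Si)$, \cref{cond: restriction of perturbations}\ref{item: extboundary} identifies $\vp_\Si(R(\Si))$ with a point of $\phi_\Si(\frakextdel(\{\Si\},R(\Si)))\subset \Met(X)$, and since $R(\Si)\geq \bar R(\Si)$, \cref{prop: vanishing on certain face} rules out solutions to the unperturbed equations. If moreover $n\geq 1$, \cref{cond: restriction of perturbations}\ref{item: genericity in interior} makes $\vp_\Si$ generic on $(0,R(\Si))$, and the formal dimension $d(\fraks)+1 = -n\leq -1$ of the parameterized moduli forces it to be empty, giving the strengthened statement.

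For the inductive step $k\geq 1$, I would decompose $\del([0,R(\si)]^\si)$ into three types of pieces as in \eqref{eq: decomposition of the cube to interior and boundary} and treat them separately. On $\extdel(\si,R(\si))$, \cref{cond: restriction of perturbations}\ref{item: extboundary} gives $\vp_\si = \phi_\si\circ \psi_{\si,R(\si)}^{-1}$, whose image lies in unperturbed stretched metrics, so \cref{prop: vanishing on certain face} applies with $R(\si)\geq \bar R(\si)$. On $[0,R(\tau)]^\tau$ for $\tau\prec\si$ with $\dim\tau=k-1$, \cref{cond: restriction of perturbations}\ref{item: interior boundary} gives $\vp_\si=\vp_\tau$; since $k-1\leq n-1 < n$, the strengthened inductive hypothesis for $\tau$ applies to the \emph{entire} cube $[0,R(\tau)]^\tau$, not only its boundary. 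Finally, on $\bigsqcup_{R(\tau)\leq R\leq R(\si)}\extdel(\tau,R)$, \cref{cond: restriction of perturbations}\ref{item: interior boundary} realises $\vp_\si$ as $\phi_\tau\circ\psi_{\tau,R}^{-1}$ varying in $R$, again landing in unperturbed metrics, and \cref{prop: vanishing on certain face} applies for each $R\geq R(\tau)\geq \bar R(\tau)$. This settles the boundary claim. When $k<n$, the strengthening is obtained by invoking \cref{cond: restriction of perturbations}\ref{item: genericity in interior} on $(0,R(\si))^\si$ together with the dimension count $d(\fraks)+(k+1) = k-n\leq -1 < 0$, which makes the parameterized moduli space generically empty there.

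\textbf{Main obstacle.} The only nontrivial point is recognizing that the naive inductive claim ``no solutions on the boundary'' is not self-sustaining: when handling the face-cube piece $[0,R(\tau)]^\tau$ at step $k$, one needs the absence of solutions throughout that cube, hence the necessity of the strengthening for $k'<n$. Once this strengthening is in place, the rest is a bookkeeping exercise combining \cref{cond: restriction of perturbations} (which specifies $\vp_\si$ piece by piece), \cref{prop: vanishing on certain face} (for pieces consisting of stretched unperturbed metrics), and transversality with dimension counting for the open interior. A minor point to verify in passing is that each constituent map actually lands in $\circPi$, so that reducibles never enter: this follows on the ``extdel'' pieces from \cref{prop: vanishing on certain face} (no solutions, hence no reducibles), on the lower-dimensional faces from the inductive construction of $\vp_\tau$, and on the interior from the genericity condition.
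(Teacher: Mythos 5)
Your proof is correct and follows the same route as the paper: induction on $k$, the same three-part decomposition of $\del([0,R(\si)]^\si)$ from \eqref{eq: decomposition of the cube to interior and boundary}, \cref{prop: vanishing on certain face} handling the $\frakextdel$-type pieces, and the inductive hypothesis handling the codimension-one face-cubes $[0,R(\tau)]^\tau$. The one place where you are more careful than the paper's terse write-up is in recognizing that the literal inductive hypothesis only controls $\del([0,R(\tau)]^\tau)$, so to clear the whole cube $[0,R(\tau)]^\tau$ one must also invoke \cref{cond: restriction of perturbations}\eqref{item: genericity in interior} on the interior together with the dimension count $d(\fraks)+\dim\tau+1<0$; your strengthened inductive claim (no solutions on the entire cube when $k<n$) packages exactly this and makes the induction self-contained, whereas the paper compresses it into ``follows from the induction hypothesis.''
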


\begin{proof}
The proof in the case that $k = -1$ is obvious since $\vp_{\emptyset}({\rm pt}) = \ast$ and we choose $\ast$ so that the moduli space with respect to $\ast$ is empty.
For $k \in \{0, \ldots, n\}$, the proof is by induction on $k$ as follows.
First take $\Si \in S_{0}(\calK)$.
Then we have $\vp_{\Si}\mleft( \del{\left([0, R(\Si)]^{\Si}\right)} \mright) = \{ \ast \} \cup \phi_{\Si}(|\{\Si\}| \times \{R(\Si)\})$.
 There is no solution for $\ast$ as above, and also for $\phi_{\Si}(|\{\Si\}| \times \{R(\Si)\})$ by \cref{prop: vanishing on certain face}.
Next, for $k \in \{1, \ldots, n\}$ assume that the statement holds for any $l \leq k-1$.
Take $\si \in S_{k}(\calK)$.
For elements in $\extdel(\si, R(\si))$ and $\bigsqcup_{R(\tau) \leq R \leq R(\si)} \extdel(\tau, R)$ for some $\tau \prec \si$ with $\dim{\tau} = k-1$, the non-existence of solutions follows from \cref{prop: vanishing on certain face}.
For elements in $\del([0, R(\tau)]^{\tau})$, the non-existence follows from the induction hypothesis.
For elements in $(0, R(\tau))^{\tau}$, the non-existence follows from that $\dim\tau<n$ and that the formal dimension for the unparametrized Seiberg--Witten equations is given by $d(\fraks)=-(n+1)$.
\end{proof}

Let $(h, \mu) \in \Pi$.
We denote by $\calC(h) = \calC(h, \fraks)$ the $L^{2}_{l}$-completion defined by $h$ of the space of pairs consisting of a $U(1)$-connection of the determinant line bundle and a positive spinor for $\fraks$.
The gauge group $\calG(h)$ is defined as the $L^{2}_{l+1}$-completion using $h$ of $\Map(X, U(1))$.
We define $\calB(h) = \calB(h, \fraks)$ by $\calB(h) := \calC(h)/\calG(h)$.
We will denote by $\calM(h, \mu) = \calM((h, \mu), \fraks) \subset \calB(h)$ the moduli space for $(h, \mu)$ defined as the quotient of the space of solutions to the \SW equations with respect to $(h, \mu)$ divided by $\calG(h)$ as usual.
Let $\vp : T \to \Pi$ be a map from a topological space $T$, which is called the {\it parameter space}.
We define the parameterized moduli space parameterized by $\vp$ on $T$ as
\[
\calM(\vp) = \calM(\vp, \fraks) := \bigsqcup_{t \in T} \calM(\vp(t), \fraks).
\]

Now let us consider the parameterized moduli space given by $\vp_{\bullet}$.
Let $\si \in S_{k}(\calK)$ a simplex of $\calK$ with $k \in \{0, \ldots, n\}$.
The parameterized moduli space $\calM(\vp_{\si})$ is compact because of the compactness argument for the usual \SW theory and  the compactness of the parameter space $[0, R(\si)]^{\si}$.
Since we avoid the wall, $\calM(\vp_{\si})$ does not contain a reducible solution.
We here note that
\[
\calM(\vp_{\si}, \fraks) = \bigsqcup_{t \in (0, R(\si))^{\si}} \calM(\vp_{\si}(t), \fraks)
\]
follows from \cref{lem: vanishing on boundary}.
Recall that $\vp_{\si}$ is generic on $(0, R(\si))^{\si}$ and that $d(\fraks) = -(n+1)$.
We therefore have $\calM(\vp_{\si}) = \emptyset$ if $k \leq n - 1$, and $\calM(\vp_{\si})$ is a $0$-dimensional manifold if $k = n$.

Let $\lb \si \rb$ an oriented $n$-simplex of $\calK$.
Then we have an orientation of $[0, R(\si)]^{\si}$.
Putting the given homology orientation and this orientation together, we obtain the orientation of $\calM(\vp_{\si})$.
(See, for example, Appendix A in Salamon's book~\cite{Salamon}.
We follow the orientation convention in this book.)
We now eventually have an oriented compact $0$-dimensional manifold $\calM(\vp_{\si})$ from $\lb \si \rb$.
We will denote by
\[
\SWinv(\vp_{\si}) = \SWinv(\vp_{\si}, \fraks) \in \Z
\]
the algebraic count $\#\calM(\vp_{\si})$.
Set $\calA = (g, N(\cdot), a(\cdot),  \vp_{\bullet})$, which
we call an {\it additional data}.
We define an $n$-cochain
\[
\SWcoch(X, \fraks, \calA) \in C^n(\calK)
\]
by
\[
C_n(\calK) \to \Z\ ;\ \lb \si \rb \mapsto \SWinv(\vp_{\si}).
\]

We will prove the following \lcnamecref{prop: coclosedness} in \cref{subsection: Arguments by cobordisms}:

\begin{prop}
\label{prop: coclosedness}
The cochain $\SWcoch(X, \fraks, \calA)$ constructed above is a cocycle.
\end{prop}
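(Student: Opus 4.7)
The plan is to extend the cochain-defining construction one dimension higher and apply a one-dimensional cobordism argument. Fix an $(n+1)$-simplex $\lb \si \rb$ of $\calK$ with vertices $\Si_0, \ldots, \Si_{n+1}$, and write $\tau_i = \si \setminus \{\Si_i\}$ for its codimension-one faces, so that $\del \lb \si \rb = \sum_{i=0}^{n+1} (-1)^i \lb \tau_i \rb$. I will construct a map $\vp_{\si} : [0, R(\si)]^{\si} \to \circPi$ by running the exact same inductive step that produced the lower-dimensional $\vp_{\tau}$: on $\extdel(\si, R(\si))$ use $\phi_{\si} \circ \psi_{\si, R(\si)}^{-1}$; on each $[0, R(\tau_i)]^{\tau_i}$ use the already-constructed $\vp_{\tau_i}$; on each intermediate region $\bigsqcup_{R(\tau_i) \leq R \leq R(\si)} \extdel(\tau_i, R)$ use the stretching map $\phi_{\tau_i}$; and in the interior $(0, R(\si))^{\si}$ perturb generically. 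This extension exists since the wall $\calW$ has codimension $b^+(X) \geq n+3$, which strictly exceeds $n+2 = \dim [0, R(\si)]^{\si}$, so a generic map lands in $\circPi$.

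With $\vp_{\si}$ in hand, the parameterized moduli space $\calM(\vp_{\si})$ has formal dimension $d(\fraks) + (n+2) = 1$. It is compact (standard \SW compactness together with compactness of $[0, R(\si)]^{\si}$) and, by a Sard-type argument along the interior genericity, it is a compact oriented $1$-manifold whose boundary lies over the boundary of the cube. The boundary decomposes as in \eqref{eq: decomposition of the cube to interior and boundary} into three types of pieces: (i) $\extdel(\si, R(\si))$, on which \cref{prop: vanishing on certain face} gives $\calM = \emptyset$; (ii) the intermediate strips $\bigsqcup_{R(\tau_i)\leq R \leq R(\si)} \extdel(\tau_i, R)$, where \cref{prop: vanishing on certain face} again forces $\calM = \emptyset$; and (iii) the faces $[0, R(\tau_i)]^{\tau_i}$ on which the perturbation restricts to $\vp_{\tau_i}$, so the restricted moduli space is exactly $\calM(\vp_{\tau_i})$. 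Thus $\del \calM(\vp_{\si}) = \bigsqcup_{i=0}^{n+1} \calM(\vp_{\tau_i})$ as oriented $0$-manifolds, up to the orientation signs induced from the cube boundary.

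The remaining point is the sign bookkeeping: the orientation of $[0, R(\si)]^{\si}$ induced from $\lb \si \rb$ together with the Salamon convention for the parameterized \SW moduli space gives each face $[0, R(\si)]^{\tau_i}$ the boundary orientation $(-1)^i$ times the orientation of $\calM(\vp_{\tau_i})$ coming from $\lb \tau_i \rb$, exactly matching the simplicial boundary formula. Applying $\#\del = 0$ for a compact oriented $1$-manifold yields
\[
0 = \#\del \calM(\vp_{\si}) = \sum_{i=0}^{n+1} (-1)^i \SWinv(\vp_{\tau_i}) = \SWcoch(X, \fraks, \calA)(\del \lb \si \rb),
\]
which is the cocycle condition. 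The main obstacle is twofold: first, rigorously justifying that the piecewise-defined extension $\vp_{\si}$ can be taken simultaneously continuous, wall-avoiding, and generic in the interior (which reduces to a standard transversality argument since $b^+(X) \geq n+3$); second, checking that the cubical-to-simplicial sign correspondence is indeed the one claimed above, which amounts to tracking the outward-normal-first convention through the identification of the $\Si_i$-coordinate-zero face of the cube with $\tau_i$.
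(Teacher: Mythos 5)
Your proof is correct and follows essentially the same route as the paper: extend the inductive perturbation construction to $(n+1)$-simplices (using $b^+(X)\geq n+3$ to stay off the wall), decompose $\del([0,R(\si)]^{\si})$ into the three pieces, invoke the vanishing result on the two stretched pieces, identify the moduli space over the remaining faces $[0,R(\tau_i)]^{\tau_i}$ with $\calM(\vp_{\tau_i})$, and conclude by the $1$-dimensional cobordism argument. The one small point the paper makes explicit that you leave implicit is the use of \cref{lem: vanishing on boundary} to show the moduli space also vanishes over $\del([0,R(\tau_i)]^{\tau_i})$, so that the count over $\del([0,R(\si)]^{\si})$ collapses cleanly to a disjoint sum over the interiors $(0,R(\tau_i))^{\tau_i}$ with no contributions from the corners where the pieces meet; this is worth stating since it is what makes the equality $\del \calM(\vp_{\si}) = \bigsqcup_i \calM(\vp_{\tau_i})$ literal rather than approximate.
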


We now may write the definition of the cohomological invariant:

\begin{defi}
\label{defi: main definition}
Let $X$ be a smooth oriented closed connected $4$-manifold equipped with a homology orientation, $\fraks$ be a $\spc$ structure on $X$, and $n$ be the integer given by $d(\fraks) = -(n+1)$.
Assume that $n \geq 0$, $b^+(X) \geq n+3$, and $V(\calK(X, \fraks)) \neq \emptyset$.
We define a cohomology class $\SWcoh(X, \fraks) \in H^n(\calK(X, \fraks) ;\Z)$ by
\[
\SWcoh(X, \fraks) := [\SWcoch(X, \fraks, \calA)],
\]
where $\calA = (g, N(\cdot), a(\cdot),  \vp_{\bullet})$ is an additional data.
\end{defi}

In fact, $\SWcoh(X, \fraks)$ is independent of the choice of $\calA$:

\begin{theo}
\label{theo: well-definedness of the invariant}
The cohomology class $\SWcoh(X, \fraks) \in H^n(\calK(X, \fraks) ;\Z)$ is an invariant of a $spin^{c}$ $4$-manifold $(X, \fraks)$.
Namely the cohomology class $[\SWcoch(X, \fraks, \calA)]$ in \cref{defi: main definition} is independent of the choice of $\calA$. 
\end{theo}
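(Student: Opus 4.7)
The plan is to run the standard cobordism argument for additional data: given two data $\calA_0 = (g_0, N_0(\cdot), a_0(\cdot), \vp^0_\bullet)$ and $\calA_1 = (g_1, N_1(\cdot), a_1(\cdot), \vp^1_\bullet)$, I construct an interpolating family and use it to build an $(n-1)$-cochain $\SWcoch[{{0,1}}] \in C^{n-1}(\calK)$ satisfying
\[
\SWcoch(X, \fraks, \calA_0) - \SWcoch(X, \fraks, \calA_1) = \pm \codel \SWcoch[{{0,1}}].
\]
First I would connect the two data by a one-parameter family $\{\calA_t\}_{t \in [0,1]}$. Here the metrics $g_t$ are a path in $\Met(X)$ (which is convex), and after passing to sufficiently small nested tubular neighborhoods one chooses a single $N(\cdot)$ and a function $a(\cdot)$ that works for the whole path so that \cref{cond: condition for a()} holds uniformly in $t$. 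The corresponding $(n+1)$-parameter stretching families $\phi_\si$ of \cref{section: Family of Riemannian metrics} then deform continuously in $t$, and a $t$-uniform version of \cref{prop: vanishing on certain face} (obtained from the quantitative Kronheimer--Mrowka estimates of \cref{lem: cylinder,lem: L^2-a priori} applied along the compact path $\{g_t\}$) yields a constant $R$ with the property that no solution to the unperturbed \SW equations exists over $\phi_{\si,t}(\frakextdel(\si, R))$ for any $t \in [0,1]$ and any $\si$.

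Next, inductively on the dimension $k \leq n-1$ of $\si \in S_k(\calK)$, I would construct a map
\[
\vp_\si : [0, R(\si)]^\si \times [0,1] \to \circPi
\]
satisfying the analogues of \cref{cond: restriction of perturbations}: it agrees with $\vp^0_\si$ on the $t=0$ face and with $\vp^1_\si$ on the $t=1$ face; on $\extdel(\si, R(\si)) \times [0,1]$ it is dictated by $\phi_\si \circ \psi_{\si, R(\si)}^{-1}$ composed with the $t$-family, so that by the uniform vanishing above no solutions occur there; on faces $[0, R(\tau)]^\tau \times [0,1]$ for $\tau \precneqq \si$ it coincides with the already-constructed $\vp_\tau$; and on the interior it is chosen generically. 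Since $b^+(X) \geq n+3$, the wall $\calW$ has codimension at least $n+3$ in $\Pi$, which gives the transversality headroom needed for an $(n+1)$-parameter family (for $(n-1)$-simplices) and even for an $(n+2)$-parameter family (for $n$-simplices). For each oriented $(n-1)$-simplex $\lb \si \rb$, the parameterized moduli space $\calM(\vp_\si)$ is a compact, oriented, $0$-dimensional manifold by dimension count ($\dim \calM = -(n+1) + (n+1) = 0$), and I define
\[
\SWcoch[{{0,1}}](\lb \si \rb) := \#\calM(\vp_\si).
\]

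Finally, for each oriented $n$-simplex $\lb \si \rb$, I apply the same inductive construction one dimension higher to produce an $(n+2)$-parameter family $\vp_\si : [0,R(\si)]^\si \times [0,1] \to \circPi$; the associated parameterized moduli space is a compact oriented $1$-manifold, so its signed boundary count vanishes. Analyzing the contributions from the codimension-one faces of $[0,R(\si)]^\si \times [0,1]$: the face $\{t = 0\}$ contributes $\SWcoch(X, \fraks, \calA_0)(\lb \si \rb)$, the face $\{t = 1\}$ contributes $-\SWcoch(X, \fraks, \calA_1)(\lb \si \rb)$, the exterior faces $\extdel(\si, R(\si)) \times [0,1]$ contribute zero by the uniform vanishing, and the remaining lateral faces $[0,R(\tau)]^\tau \times [0,1]$ contribute $\pm \SWcoch[{{0,1}}](\lb \tau \rb)$ for the codimension-one faces $\tau$ of $\si$, summing to $\mp \codel \SWcoch[{{0,1}}](\lb \si \rb)$. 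This yields the required cochain identity, hence the cohomology class is independent of $\calA$.

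The main technical obstacle, just as in the constructions of \cref{section: Family of Riemannian metrics,section: Construction of the cohomological invariant}, is the fact that $V(\calK)$ is infinite: one cannot fix common tubular neighborhoods depending only on each $\Si$, so the interpolation of the data $N(\cdot)$ and $a(\cdot)$ must be executed simplex-by-simplex with the nesting conditions of \cref{cond: condition for a()} preserved uniformly in $t$, and the $t$-uniform version of the vanishing \cref{prop: vanishing on certain face} must be established by tracking the dependence of $\bar R(\si)$ and $\lambda(\si)$ continuously along the path $\{g_t, N_t(\cdot), a_t(\cdot)\}$. Once this uniform vanishing is in hand, the remainder is a routine induction and Stokes-type bookkeeping.
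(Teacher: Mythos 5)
Your overall strategy is exactly the paper's: connect $\calA_0$ to $\calA_1$ by a one-parameter family, run the inductive construction of perturbations one dimension higher, define $\SWcoch[{{0,1}}]$ by counting over the resulting $(n+1)$-dimensional parameter spaces, and read off $\SWcoch[0] - \SWcoch[1] = \pm\,\codel\,\SWcoch[{{0,1}}]$ from the boundary of a compact oriented $1$-manifold. The uniform $R(\si) := \max_{t}R_t(\si)$ (so that the domain becomes the product $[0,R(\si)]^\si \times [0,1]$ rather than the variable-width $\bigsqcup_t [0,R_t(\si)]^\si$) is a harmless repackaging once a $t$-dependent vanishing radius is available, and the rest of the dimension count and Stokes bookkeeping agrees with the paper.

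The one step you cannot make as stated is replacing $N_0(\cdot), a_0(\cdot)$ and $N_1(\cdot), a_1(\cdot)$ by a single common $N(\cdot), a(\cdot)$. These functions are part of the data $\calA_i$; if the interpolating family uses some third, shrunken $N(\cdot), a(\cdot)$, then the moduli counts at the $t=0$ and $t=1$ faces of your cobordism compute cochains built from \emph{different} additional data, not $\SWcoch(X, \fraks, \calA_0)$ and $\SWcoch(X, \fraks, \calA_1)$ — which is the very thing you are trying to compare. To make this route work you would need a separate cobordism argument that shrinking $N(\cdot), a(\cdot)$ (with the metric and perturbations fixed) only changes the cochain by a coboundary, i.e.\ you would be front-loading a piece of the invariance statement into the setup. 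The paper avoids this by directly interpolating the data: a continuous family $N_t(\cdot)$ (via uniqueness of tubular neighborhoods, together with the induced $\tilde f_{\Si,t}$) and $a_t(\cdot)$ satisfying the $t$-parametrized analogue of \cref{cond: condition for a()}, with $N_t, a_t$ agreeing with the given $N_i, a_i$ at $t=i$. This forces $R_t(\si)$ and hence the parameter cuboid to depend on $t$, which is why the paper uses $\bigsqcup_{t}[0,R_t(\si)]^\si$. Once the endpoints are pinned down correctly, your remaining induction and the face-by-face analysis of the compact $1$-manifold go through.
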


We will prove \cref{theo: well-definedness of the invariant} in \cref{subsection: Arguments by cobordisms}.

\begin{rem}
To define the number $\SWinv(\vp_{\si}) \in \Z$ used to define $\SWcoch(X, \fraks, \calA)$, in fact we do not need to assume that $\vp_{\si}$ is generic on $(0, R(\si))^{\si}$ for each $n$-simplex $\si$.
To justify the counting argument in the case that $\vp_{\si}$ is not generic, one can use Ruan's virtual neighborhood technique~\cite{MR1635698}.
More precisely, one needs a family version of the technique (for example, see~\cite{Konno}) in our case.
An advantage of the use of the virtual neighborhood technique is that we can use a map $\vp_{\si}$ which is continuous (not necessarily smooth) on $(0, R(\si))^{\si}$ to define $\SWcoch(X, \fraks, \calA)$.
This is based on the same mechanism in the following linear situation:
to define the index of a family of Fredholm operators, we do not need to assume that the family {\it smoothly} depends on its parameter, but just need to assume it {\it continuously} depends on it.
\end{rem}

\subsection{Arguments by cobordisms}
\label{subsection: Arguments by cobordisms}

The purpose of this subsection is to prove \cref{prop: coclosedness,theo: well-definedness of the invariant}.
Both of them are shown using arguments by cobordisms.
We follow the all settings and the notation of \cref{subsection: Definition of the invariant}.

\begin{proof}[Proof of \cref{prop: coclosedness}]
As we noted in \cref{subsection: Definition of the invariant}, perturbations $\vp_{\bullet}$ can be constructed for $(n+1)$-simplices since $b^{+} \geq n+3$.
Let us fix $\vp_{\bullet}$ for $(n+1)$-simplices through the inductive procedure in \cref{subsection: Definition of the invariant}.

Let us take an oriented $(n+1)$-simplex $\lb \si \rb = \lb \Si_0 , \ldots, \Si_{n+1} \rb$ of $\calK$.
Write $\lb \tau_i \rb = \lb \Si_{0}, \ldots, \hat{\Si}_{i} ,\ldots,\Si_{n+1} \rb$, then we have
\begin{align}
\SWcoch(X, \fraks, \calA)(\del \lb \si \rb) = \sum_{i = 0}^{n+1} (-1)^i \SWinv(\vp_{\tau_i}).
\label{eq: from the definition of the boundary op}
\end{align}
Recall the decomposition
\[
 \del{\left([0, R(\si)]^{\si}\right)}
 = 
 \extdel(\si, R(\si))
\cup
 \bigcup_{i=0}^{n+1}
 	\left( [0, R(\tau_i)]^{\tau_i}
		\cup \bigsqcup_{R(\tau_i) \leq R \leq R(\si)} \extdel(\tau_i, R)
	\right).
\]
By \cref{prop: vanishing on certain face},
the parameterized moduli space $\calM(\vp_{\si})$ vanishes on $\del{\left([0, R(\si)]^{\si}\right)}$ except for the components $[0, R(\tau_i)]^{\tau_i}$ for $i = 0, \ldots, n+1$.
The moduli space also vanishes on $\del\left([0, R(\tau_i)]^{\tau_i}\right)$ by \cref{lem: vanishing on boundary}.
Hence the count of the point of the moduli space on $\del{\left([0, R(\si)]^{\si}\right)}$ coincides with that on $\bigsqcup_{i=0}^{n+1} (0, R(\si_{\tau_{i}}))^{\tau_{i}}$ for the perturbations $\vp_{\tau_{i}}$.
By taking the orientations into account, we therefore have
\begin{align}
\sum_{i = 0}^{n+1} (-1)^i \SWinv(\vp_{\tau_i}) = \#\calM(\vp_{\si}|_{\del{\left([0, R(\si)]^{\si}\right)}}).
\label{eq: counting on boundary}
\end{align}
Since the total moduli space $\calM(\vp_{\si})$ is an oriented compact $1$-dimensional manifold, the right-hand side of \eqref{eq: counting on boundary} is zero.
From this and \eqref{eq: from the definition of the boundary op}, we have $\SWcoch(X, \fraks, \calA)(\del \lb \si \rb) = 0$.
This proves the \lcnamecref{prop: coclosedness}.
\end{proof}

\begin{proof}[Proof of \cref{theo: well-definedness of the invariant}]
Take two additional data $\calA_{i} = (g_{i}, N_{i}(\cdot), a_{i}(\cdot), \vp_{\bullet, i})$ $(i = 0, 1)$;
let us write $\SWcoch(X, \fraks, \calA_{i})$ simply $\SWcoch[i]$.
We now connect these two data $\calA_{0}$ and $\calA_{1}$ by a $1$-parameter family.
Take a generic path
\begin{align}
\label{eq: path of metrics for cobordism argument}
\{g_{t}\}_{t \in [0,1]}
\end{align}
from $g_{0}$ and $g_{1}$ in $\Met(X) \cap \circPi$.
For each $\Si \in V(\calK)$, by the uniqueness of tubular neighborhoods, we may take a continuous family of tubular neighborhoods $N_{t}(\Si)$ of $\Si$.
Then we have a family of diffeomorphisms $f_{\Si, t} : N_{t}(\Si) \to D^2 \times \Si$  with $f_{\Si, t}|_{\Si} = \iota_{\Si}$; we also obtain a family $\tilde{f}_{\Si, t} : N_{t}(\Si) \setminus \Si \to (0, 1] \times S^1 \times \Si$ by the same way to define $\tilde{f}_{\Si}$ in \cref{subsection: Construction of a family of Riemannian metrics}.
We may also take a family of positive numbers $a_{t}(\cdot)$ satisfying the following conditions:
$a_{t}(\Si) = a_{t}(\{\Si\}) = 1$ holds for each $\Si \in V(\calK)$ and each $t \in [0, 1]$, and for each $\si \in S(\calK)$, the condition obtained by replacing $a(\cdot)$, $U(\cdot)$, $N(\cdot)$, and $\tilde{f}_{\Si}$ with $a_{t}(\cdot)$, $U_{t}(\cdot)$, $N_{t}(\cdot)$, and $\tilde{f}_{\Si, t}$ in \cref{cond: condition for a()} respectively holds.
We next take a path $\vp_{\bullet, t}$ as follows.
By repeating the construction of $\phi_{\bullet}$ in \cref{subsection: Construction of a family of Riemannian metrics} with the parameter $t$, we obtain $\phi_{\bullet, t}$.
Set $R_{t}(\si) = R(\si, g_{t}, N_{t}(\cdot), a_{t}(\cdot))$.
Note that $R_{t}(\si)$ continuously depends on $t$.
Then we may prove the statement obtained by replacing $R(\si)$ and $\phi_{\si}$ with $R_{t}(\si)$ and $\phi_{\si, t}$ in \cref{prop: vanishing on certain face} respectively by the quite same way in \cref{Vanishing property of the \SW moduli space on a family}.
We now fix a map
\[
\vp_{\si, \bullet} = \bigsqcup_{t \in [0,1]} \vp_{\si, t} : \bigsqcup_{t \in [0,1]} [0, R_{t}(\si)]^{\si} \to \circPi
\]
for each $\si \in S_{k}(\calK)$ with $k \in \{-1, 0, \ldots, n\}$ through the following inductive procedure.
For $i=0,1$, denote by $\ast_{i}$ the image of $\vp_{\emptyset, i}$.
Take a path $\ast_{t}$ in $\circPi$ from $\ast_{0}$ to $\ast_{1}$.
For each $\{\Si\} \in S_{0}(\calK)$, recall that $\vp_{\Si, i} : [0, R_{i}(\Si)] \to \circPi$ is given as a path from $\ast_{i}$ to $\phi_{\Si, i}(|\{\Si\}| \times \{R_{i}(\Si)\})$.
Let us take a map $\vp_{\Si, \bullet} : \bigsqcup_{t \in [0,1]} [0, R_{t}(\Si)] \to \circPi$ satisfying the following three conditions:
\begin{enumerate}
\item On $\bigsqcup_{t \in [0,1]} \{0\} \cong [0,1]$, the map $\vp_{\Si, \bullet}$ coincides with
\[
g_{\bullet} : [0,1] \to \Met(X) \cap \circPi\ ;\ t \mapsto g_{t}
\]
taken in \eqref{eq: path of metrics for cobordism argument}.
\item On $\bigsqcup_{t \in [0,1]} \{R_{t}(\Si)\} \cong [0,1]$, the map $\vp_{\Si, \bullet}$ coincides with
\[
\phi_{\Si, \bullet} : [0, 1] \to \Met(X) \cap \circPi\ ;\ t \mapsto \phi_{\Si, t}(|\{\Si\}| \times \{R_{t}(\Si)\}).
\]
\item For each $i=0,1$, on $[0, R_{i}(\Si)]$, the map $\vp_{\Si, \bullet}$ coincides with $\vp_{\Si, i}$.
\end{enumerate}
Next we consider $k$-simplices with $k \in \{1\, \ldots, n\}$.
Assume that $\vp_{\tau, \bullet}$ is already constructed for any $l \leq k-1$ and $\tau \in S_l(\calK)$, and that for each $t \in [0,1]$ the restriction $\vp_{\tau, t}|_{\extdel(\tau, R_{t}(\tau))}$ coincides with the composition
\begin{align*}
\phi_{\tau, t} \circ \psi_{\tau, R_{t}(\tau)}^{-1} : \extdel(\tau, R_{t}(\tau)) \cong \frakextdel(\tau, R_{t}(\tau)) \to \circPi.
\end{align*}
(We note that the homeomorphism $\psi_{\tau, R}$ for $R \geq 0$ given in \cref{prop: structure of the subspace of the domain of phi} is independent of any additional data.)
Let us take $\si \in S_{k}(\calK)$.
By replacing $R(\cdot)$ with $R_{t}(\cdot)$ in \eqref{eq: decomposition of the cube to interior and boundary} and taking the disjoint union with respect to $t \in [0,1]$, we have a decomposition of $\bigsqcup_{t \in [0,1]} [0, R_{t}(\si)]^{\si}$.
We take $\vp_{\si, \bullet}$ satisfying the two conditions:
the first one is the condition obtained by replacing $R(\cdot)$, $\vp_{\si}$, and $\phi_{\si}$ in \eqref{item: extboundary} and \eqref{item: interior boundary} in \cref{cond: restriction of perturbations} with $R_{t}(\cdot)$, $\vp_{\si, t}$, and $\phi_{\si,t}$ respectively for any $t \in [0,1]$, and the second is that obtained by replacing $(0,R(\si))^{\si}$ and $\vp_{\si}$ in \eqref{item: genericity in interior} in \cref{cond: restriction of perturbations} with $\bigsqcup_{t \in [0,1]} (0,R_{t}(\si))^{\si}$ and $\vp_{\si, \bullet}$ respectively.
This procedure can be continued to $k = n$ since we have assumed that $b^{+} > n+2$.

Using the $1$-parameter family connecting $\calA_{0}$ and $\calA_{1}$ obtained above, we define
\[
\SWcoch[{{0,1}}] \in C^{n-1}(\calK)
\]
by
\[
C_{n-1}(\calK) \to \Z\ ;\ \lb \tau \rb \mapsto (-1)^{n-1} \SWinv(\vp_{\tau, \bullet}),
\]
where $\SWinv(\vp_{\tau, \bullet}) = \#\calM(\vp_{\tau, \bullet})$.
Here the orientation of $\calM(\vp_{\tau, \bullet})$ is defined by the product orientation of the parameter space via the identification $\bigsqcup_{t \in [0,1]} [0,R_{t}(\tau)]^{\tau} \cong [0,R_{0}(\tau)]^{\tau} \times [0,1]$.

Now let us take an oriented $n$-simplex $\lb \si \rb = \lb \Si_0 , \ldots, \Si_{n} \rb$ and
write $\lb \tau_j \rb = \lb \Si_{0}, \ldots, \hat{\Si} ,\ldots,\Si_{n} \rb$.
Then we have
\begin{align*}
&(\SWcoch[1] - \SWcoch[0] -\codel \SWcoch[{{0,1}}])( \lb \si \rb )\\
= & \SWinv(\vp_{\si, 1}) - \SWinv(\vp_{\si, 0}) - (-1)^{n-1} \sum_{j=0}^{n} (-1)^{j} \SWinv(\vp_{\tau_{j}, \bullet})\\
= & \#\calM(\vp_{\si, \bullet}|_{\del( \bigsqcup_{t \in [0,1]} [0, R_{t}(\si)]^{\si})}) = 0,
\end{align*}
where the last equality follows from the fact that $\calM(\vp_{\si, \bullet})$ is an oriented compact $1$-dimensional manifold.
We therefore have $\SWcoch[1] - \SWcoch[0] = \codel \SWcoch[{{0,1}}]$.
This proves the \lcnamecref{theo: well-definedness of the invariant}.
\end{proof}

\begin{rem}
\label{rem: impossibility on isotopy classes}
For a closed $\spc$ $4$-manifold $(X, \fraks)$, in the same vein of the definition of the complex of curves due to Harvey~\cite{MR624817} in $2$-dimensional topology, one can define an abstract simplicial complex $\calK^{{\rm isot}} = \calK^{{\rm isot}}(X, \fraks)$: vertices are isotopy classes of embeddings of surfaces and  simplices are given as collections of such isotopy classes which can be represented by disjoint surfaces.
A natural question one can ask is whether we can construct a cohomology class which is similar to $\SWcoh(X, \fraks)$ on this simplicial complex $\calK^{{\rm isot}}$.
However, to construct such a cohomology class on $\calK^{{\rm isot}}$,  the way used to construct $\SWcoh(X, \fraks)$ on $\calK$ cannot work similarly.
An essential issue is that it is difficult to find  an effective analogy of the proof of \cref{theo: well-definedness of the invariant} for $\calK^{{\rm isot}}$.
Namely it seems hard to absorb new ambiguity arising from choices of representatives of isotopy classes into coboundaries.
The author does not know the answer to the question above at this stage.
\end{rem}

\subsection{The case when the moduli space is of higher-dimension}

\label{subsection: Definition of the invariant for general spinc structures}

In this subsection, using  what is called $\mu$-maps, we extend the definition of the cohomological \SW invariant also to the case when higher-dimensional moduli spaces appear.
Let $(X, \fraks)$ be a smooth oriented closed connected $\spc$ $4$-manifold equipped with a homology orientation.
We assume that $V(\calK(X, \fraks)) \neq \emptyset$.
Let $n \geq 0$ be a natural number such that 
$b^+(X) \geq n+3$ holds and $d(\fraks) + n +1$ is a non-negative even number.
For such a natural number, we define a cohomology class
\[
\SWcoh^{n}(X, \fraks) \in H^{n}(\calK(X, \fraks);\Z)
\]
in this subsection.
If there is no such a natural number $n$ for $\fraks$, we define the cohomological \SW invariant of $(X, \fraks)$ as $0 \in H^{\ast}(\calK(X, \fraks);\Z)$.
We denote by $m$ the non-negative number given by $2m = d(\fraks) + n +1$.
The setting in \cref{subsection: Definition of the invariant} is the case that $d(\fraks) + n + 1 = 0$ in this subsection;
in this case $\SWcoh^{n}(X, \fraks)$ coincides with $\SWcoh(X, \fraks)$.

For a metric $h \in \Met(X)$, let $\calB^{\ast}(h) = \calB^{\ast}(h, \fraks)$ be the subspace of $\calB(h)$ consisting of the image of irreducible configurations by the quotient map $\calC(h) \to \calB(h)$.
We obtain, as usual, a complex line bundle $\calL(h) = \calL(h, \fraks) \to \calB^{\ast}(h)$ equipped with a hermitian metric using the based gauge group.

Let us fix an additional data $\calA = (g, N(\cdot), a(\cdot),  \vp_{\bullet})$ as in \cref{subsection: Definition of the invariant}.
For each $\si \in S_{k}(\calK)$ with $k \in \{-1, 0, \ldots, n\}$, let $\vp_{\si}' : [0, R(\si)]^{\si} \to \Met(X)$ be the map given as the composition of $\vp_{\si} : [0, R(\si)]^{\si} \to \circPi$ and the projection $\circPi \to \Met(X)$.
We set
\[
\calB^{\ast}(\vp_{\si}) := \bigsqcup_{t \in [0, R(\si)]^{\si}}\calB^{\ast}(\vp_{\si}'(t))
\]
and define a complex line bundle $\calL(\vp_{\si}) \to \calB^{\ast}(\vp_{\si})$ by
\[
\calL(\vp_{\si}) := \bigsqcup_{t \in [0, R(\si)]^{\si}} \calL(\vp_{\si}'(t)).
\]
As in the construction of $\vp_{\bullet}$, we can take sections
\begin{align*}
\Set{ s_{\si,i} : \calB^{\ast}(\vp_{\si}) \to \calL(\vp_{\si}) }_{\si \in S_{k}(\calK), i \in \{1, \ldots, m\}}
\end{align*}
inductively on $k \in \{-1, 0, \ldots, n\}$ satisfying the following conditions:
\begin{itemize}
\item The restriction of $s_{\si,i}$ to $\calB^{\ast}(\vp_{\si})|_{(0, R(\si))^{\si}}$ is smooth for each $k \in \{-1, 0, \ldots, n\}$, $\si \in S_{k}(\calK)$, and $i \in \{1, \ldots, m\}$.
\item The intersection
\[
\calM(\vp_{\si}, s_{\si,1}, \ldots, s_{\si,m}) := \calM(\vp_{\si}) \cap s_{\si,1}^{-1}(0) \cap \cdots s_{\si,m}^{-1}(0)
\]
is a $0$-dimensional compact oriented smooth manifold for each $\si \in S_{n}(\calK)$.
\end{itemize}
The last condition is achieved by taking all sections $s_{\si,i}$ for $k \in \{-1, 0, \ldots, n\}$, $\si \in S_{k}(\calK)$, and $i \in \{1, \ldots, m\}$ to be transverse to the zero-section.
(Note that, for all $\si \in S_{n}(\calK)$, the ``cut parametrized moduli space'' $\calM(\vp_{\si}, s_{\si,1}, \ldots, s_{\si,m})$ is empty over $\del([0, R(\si)]^{\si})$ by reason of formal dimension.)
Set $\calA_{n} := (g, N(\cdot), a(\cdot),  \vp_{\bullet}, \{s_{\bullet, i}\}_{i})$, which we also call an {\it additional data}.
We define an $n$-cochain
\[
\SWcoch^{n}(X, \fraks, \calA_{n}) \in C^n(\calK)
\]
by
\[
C_n(\calK) \to \Z\ ;\ \lb \si \rb \mapsto \#\calM(\vp_{\si}, s_{\si,1}, \ldots, s_{\si,m}).
\]
We then obtain the following proposition by the same way to prove \cref{prop: coclosedness}:

\begin{prop}
\label{prop: coclosedness n}
The cochain $\SWcoch^{n}(X, \fraks, \calA_{n})$ constructed above is a cocycle.
\end{prop}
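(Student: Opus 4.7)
The plan is to imitate the proof of \cref{prop: coclosedness} almost verbatim, with extra bookkeeping for the sections $\{s_{\bullet, i}\}$. First I would extend the inductive construction of \cref{subsection: Definition of the invariant for general spinc structures} by one more dimension, producing perturbations $\vp_{\si}$ and compatible sections $s_{\si, 1}, \ldots, s_{\si, m}$ for every $(n+1)$-simplex $\si$ of $\calK$. This extension is permitted because the hypothesis $b^+(X) \geq n+3$ leaves one more codimension in $\Pi$ to avoid the wall, and because each $s_{\si,i}$ can be produced by a transverse perturbation of a section of the line bundle $\calL(\vp_\si) \to \calB^\ast(\vp_\si)$ while prescribing its restriction to the already-chosen codimension-one faces; cutting down by $m$ such sections drops the dimension of $\calM(\vp_\si)$ from $d(\fraks)+(n+2) = 2m+1$ to $1$.

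Next I would fix an oriented $(n+1)$-simplex $\lb \si \rb = \lb \Si_0, \ldots, \Si_{n+1} \rb$ and write $\lb \tau_i \rb = \lb \Si_0, \ldots, \hat{\Si}_i, \ldots, \Si_{n+1} \rb$ for its codimension-one faces. The cut-down parameterized moduli space $\calM(\vp_\si, s_{\si, 1}, \ldots, s_{\si, m})$ is then an oriented compact $1$-dimensional manifold whose boundary lies over $\del([0, R(\si)]^\si)$. Recalling the decomposition
\[
\del([0, R(\si)]^\si) = \extdel(\si, R(\si)) \cup \bigcup_{i=0}^{n+1}\!\left([0, R(\tau_i)]^{\tau_i} \cup \bigsqcup_{R(\tau_i) \leq R \leq R(\si)}\! \extdel(\tau_i, R)\right),
\]
the pieces $\extdel(\si, R(\si))$ and $\bigsqcup \extdel(\tau_i, R)$ contribute nothing because \cref{prop: vanishing on certain face} forces $\calM(\vp_\si)$ itself to be empty there, while $\del([0, R(\tau_i)]^{\tau_i})$ contributes nothing by the inductive application of \cref{lem: vanishing on boundary} combined with the nowhere-vanishing condition imposed on $s_{\tau_i, j}$ over boundary strata. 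Only the interior of each face $[0, R(\tau_i)]^{\tau_i}$ remains, and the prescribed compatibility of the sections yields
\[
\#\partial \calM(\vp_\si, s_{\si,1},\ldots,s_{\si,m}) = \sum_{i=0}^{n+1} (-1)^i \#\calM(\vp_{\tau_i}, s_{\tau_i, 1}, \ldots, s_{\tau_i, m}) = \SWcoch^{n}(X,\fraks,\calA_n)(\del \lb \si \rb),
\]
once orientations are tracked as in \cref{prop: coclosedness}. Since the left-hand side is the signed boundary count of a compact oriented $1$-manifold it vanishes, giving the cocycle identity.

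The step I expect to be the main obstacle is the simultaneous construction of the sections $s_{\si, i}$ on the $(n+1)$-skeleton so that they (i) agree with the previously-fixed sections on every codimension-one face, (ii) remain nowhere vanishing on the "exterior" portions $\extdel(\si, R(\si))$ and $\bigsqcup \extdel(\tau_i, R)$ of the boundary so that the cobordism argument localizes to the $[0, R(\tau_i)]^{\tau_i}$ pieces, and (iii) remain sufficiently generic in the interior for the cut-down moduli space to be a smooth $1$-manifold of the expected dimension. This is a family version of the standard relative perturbation argument for $\mu$-maps; the line bundle $\calL(\vp_\si)$ varies along the parameter space, so one must invoke the parametric transversality theorem fiberwise, but compactness of $[0, R(\si)]^\si$ together with the local triviality of $\calL$ over $\calB^\ast(\vp_\si)$ make this a technical rather than fundamental issue.
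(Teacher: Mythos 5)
Your proposal is correct and follows essentially the same approach as the paper, which simply refers back to the proof of \cref{prop: coclosedness} and notes that the sections $s_{\bullet,i}$ are constructed by the same inductive procedure as the perturbations $\vp_{\bullet}$. The only minor imprecision is that the vanishing of $\calM(\vp_\si, s_{\si,1},\ldots,s_{\si,m})$ over $\del([0,R(\tau_i)]^{\tau_i})$ already follows from the emptiness of $\calM(\vp_\si)$ there (\cref{lem: vanishing on boundary}), so invoking the nowhere-vanishing of the sections at that step is not needed.
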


We now may write the definition of the generalized cohomological invariant:

\begin{defi}
\label{defi: main definition n}
Let $X$ be a smooth oriented closed connected $4$-manifold equipped with a homology orientation and $\fraks$ be a $\spc$ structure on $X$.
Assume that $V(\calK(X, \fraks)) \neq \emptyset$ and that there exists a natural number $n \geq 0$ such that 
$b^+(X) \geq n+3$ holds and $d(\fraks) + n +1$ is a non-negative even number.
We define a cohomology class $\SWcoh^{n}(X, \fraks) \in H^n(\calK(X, \fraks) ;\Z)$ by
\[
\SWcoh^{n}(X, \fraks) := [\SWcoch^{n}(X, \fraks, \calA_{n})],
\]
where $\calA_{n} = (g, N(\cdot), a(\cdot),  \vp_{\bullet}, \{s_{\bullet, i}\}_{i})$ is an additional data.
\end{defi}

Note that $\SWcoh^{n}(X, \fraks) = \SWcoh(X, \fraks)$ holds in the case that $d(\fraks) + n + 1 = 0$ by definition.
Since two sections $s_{\si,i}$ and $s_{\si,i}'$ can be connected by a path of sections, two additional data $\calA_{n}$ and $\calA_{n}'$ can be also connected by a path.
So, as in \cref{theo: well-definedness of the invariant}, we have the following theorem by a cobordism argument:

\begin{theo}
\label{theo: well-definedness of the invariant n}
The cohomology class $\SWcoh^{n}(X, \fraks) \in H^n(\calK(X, \fraks) ;\Z)$ is an invariant of a $spin^{c}$ $4$-manifold $(X, \fraks)$.
Namely the cohomology class $[\SWcoch^{n}(X, \fraks, \calA_{n})]$ in \cref{defi: main definition n} is independent of the choice of $\calA_{n}$. 
\end{theo}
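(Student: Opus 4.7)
The plan is to follow the template of the proof of \cref{theo: well-definedness of the invariant}, upgrading that argument to include a compatible $1$-parameter family of sections. Given two additional data $\calA_{n,0}$ and $\calA_{n,1}$, denote the resulting cocycles by $\SWcoch^{n}[0]$ and $\SWcoch^{n}[1]$. First, exactly as in the proof of \cref{theo: well-definedness of the invariant}, construct a $1$-parameter family $(g_t, N_t(\cdot), a_t(\cdot), \vp_{\bullet, \bullet})$ interpolating the underlying data of $\calA_{n,0}$ and $\calA_{n,1}$; the inductive construction extends up to dimension $k=n$ because $b^+(X) \geq n+3$, and the time-parameterized analogue of \cref{prop: vanishing on certain face} controls the parameterized moduli spaces on the outer boundary strata.

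Second, upgrade this family by an inductive choice of sections $\{s_{\bullet, i, \bullet}\}$. For each $k \in \{-1, 0, \ldots, n\}$, each $\si \in S_k(\calK)$, and each $i \in \{1, \ldots, m\}$, choose a section $s_{\si, i, \bullet}$ of the line bundle $\calL(\vp_{\si, \bullet}) \to \calB^{\ast}(\vp_{\si, \bullet})$ subject to: restricting to $s_{\si, i, 0}$ at $t=0$, to $s_{\si, i, 1}$ at $t=1$, and agreeing with the previously constructed $s_{\tau, i, \bullet}$ on each face $\tau \prec \si$; being nowhere vanishing on $\calB^{\ast}(\vp_{\si, \bullet})$ restricted to the boundary pieces where \cref{prop: vanishing on certain face} already forces $\calM(\vp_{\si, \bullet})$ to be empty; and being transverse on the interior so that for every $\si \in S_n(\calK)$ the intersection
\[
\calM(\vp_{\si, \bullet}, s_{\si, 1, \bullet}, \ldots, s_{\si, m, \bullet}) := \calM(\vp_{\si, \bullet}) \cap \bigcap_{i=1}^{m} s_{\si, i, \bullet}^{-1}(0)
\]
is a $1$-dimensional compact oriented smooth manifold.

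Third, define a cochain $\SWcoch^{n}[{0,1}] \in C^{n-1}(\calK)$ by
\[
\lb \tau \rb \mapsto (-1)^{n-1} \#\calM(\vp_{\tau, \bullet}, s_{\tau, 1, \bullet}, \ldots, s_{\tau, m, \bullet}),
\]
with orientation induced by the product identification $\bigsqcup_{t \in [0,1]} [0, R_t(\tau)]^{\tau} \cong [0, R_0(\tau)]^{\tau} \times [0,1]$. For an oriented $n$-simplex $\lb \si \rb = \lb \Si_0, \ldots, \Si_n \rb$, the signed count of the boundary of the oriented compact $1$-manifold $\calM(\vp_{\si, \bullet}, s_{\si, 1, \bullet}, \ldots, s_{\si, m, \bullet})$ is zero; splitting this boundary into the two top strata at $t=0,1$ (contributing $\SWcoch^{n}[0](\lb\si\rb)$ and $\SWcoch^{n}[1](\lb\si\rb)$) and the codimension-one strata over the faces $\tau_j \prec \si$ (contributing $\codel \SWcoch^{n}[{0,1}](\lb\si\rb)$), while noting the vanishing on all remaining outer faces by the time-parameterized version of \cref{prop: vanishing on certain face}, yields $\SWcoch^{n}[1] - \SWcoch^{n}[0] = \codel \SWcoch^{n}[{0,1}]$, exactly as in the proof of \cref{theo: well-definedness of the invariant}.

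The main obstacle is the simultaneous realization of all transversality conditions: the parameterized sections must respect boundary values that have already been fixed on the stratified boundary by earlier inductive steps, extend across the interpolation, and cut $\calM(\vp_{\si, \bullet})$ transversally in the interior. This is handled by the usual genericity argument in an infinite-dimensional space of sections of a complex line bundle relative to fixed boundary data, exactly parallel to the Freed--Uhlenbeck-style genericity argument underlying the choice of $\vp_{\bullet, \bullet}$ in \cref{theo: well-definedness of the invariant}; the dimension count $d(\fraks) + (n+1) - 2m + 1 = 1$ confirms that $m$ generic sections suffice to produce the required $1$-manifold.
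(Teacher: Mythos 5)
Your proposal is correct and follows essentially the same approach as the paper's (the paper gives no written proof of \cref{theo: well-definedness of the invariant n}, merely asserting it holds ``as in \cref{theo: well-definedness of the invariant}''). You correctly interpolate the underlying additional data exactly as in the proof of \cref{theo: well-definedness of the invariant}, upgrade the inductive construction to include a compatible family of sections $s_{\bullet, i, \bullet}$ (compatibility on faces, restriction to the fixed sections at $t=0,1$, non-vanishing on outer boundary strata, transversality on interiors), obtain a $1$-dimensional compact oriented cut-out moduli space for each $n$-simplex, and read off the cochain identity $\SWcoch^{n}[1] - \SWcoch^{n}[0] = \codel \SWcoch^{n}[{{0,1}}]$ from its zero total boundary count; the dimension check $d(\fraks)+(n+1)-2m+1 = 1$ is also accurate.
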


\begin{rem}
Although we will give a $\spc$ $4$-manifold $(X, \fraks)$ such that $\SWcoh(X, \fraks) \neq 0$ in \cref{section: Non-vanishing results for the cohomological invariant}, it seems hard to give a non-trivial example for $\SWcoh^{n}(X, \fraks)$ with $d(\fraks) + n +1 \neq 0$ using the proof of the non-vanishing result for $\SWcoh(X, \fraks)$.
Indeed, we will construct a $\spc$ $4$-manifold $(X, \fraks)$ with $\SWcoh(X, \fraks) \neq 0$ as the connected sum of two $\spc$ $4$-manifolds $(M, \fraks_{0})$ and $(N, \frakt)$, where the \SW invariant of $(M, \fraks_{0})$ does not vanish.
If one tries to give an example of $(X, \fraks)$ with $\SWcoh^{n}(X, \fraks) \neq 0$ and $d(\fraks) + n +1 \neq 0$ via this way, one have to find $(M, \fraks_{0})$ such that the \SW invariant of it does not vanish and $d(\fraks_{0}) > 0$.
The simple type conjecture due to Witten~\cite{MR1306021} tells us that it is difficult (or might be impossible).
The author does not know whether there exists a non-trivial example for $\SWcoh^{n}(X, \fraks)$ with $d(\fraks) + n +1 \neq 0$ at this stage.
\end{rem}

\section{Non-vanishing results for the cohomological invariant}

\label{section: Non-vanishing results for the cohomological invariant}

In this section we prove that the invariant $\SWcoh(X, \fraks)$ constructed in \cref{subsection: Definition of the invariant} is non-trivial for some $\spc$ $4$-manifolds.
We also explain the non-vanishing result from a classical point of view: we will derive some constraints connected with the adjunction inequality on configurations of embedded surfaces from the non-vanishing result.
An interesting point of our cohomological formulation is that {\it one} non-vanishing result provides certain constraints on {\it infinitely many} configurations.
We use the following two key tools to prove the non-vanishing result:
the first one is a combination of wall-crossing and gluing technique originally due to Ruberman~\cite{MR1671187, MR1734421, MR1874146}, and the second is a description of higher-dimensional wall-crossing phenomena in terms of embedded surfaces given in \cite{Konno} by the author.
These tools are explained in \cref{subsection: Ruberman's combination of wall-crossing and gluing arguments} and \cref{subsection: description of higher-dimensional wall-crossing phenomena in terms of embedded surfaces} respectively.

\subsection{Combination of wall-crossing and gluing arguments}
\label{subsection: Ruberman's combination of wall-crossing and gluing arguments}

Ruberman has discussed a combination of wall-crossing and gluing arguments in~\cite{MR1671187, MR1734421, MR1874146}.
Ruberman considered wall-crossing over a $4$-manifold with $b^+=1$, and
Baraglia and the author generalized that argument for all $b^+ \geq 1$ in \cite{BK}.
Here we shall use the result in \cite{BK}.

Let $(N, \frakt)$ be a smooth oriented closed $\spc$ $4$-manifold equipped with a homology orientation.
Set $k := b^+(N)$ and 
assume that $k \geq 1$.
Let $\vp^{N} : D^k \to \Pi(N, \frakt)$ be a generic map satisfying $\vp^{N}(S^{k-1}) \subset \circPi(N, \frakt)$ from the $k$-dimensional disk to the space of perturbations.
Here ``generic" means that $\vp^{N}$ is transverse to the wall $\calW(N, \frakt)$, defined in \eqref{eq def of wall}.
Then we can define the ``intersection number" $\vp^{N} \cdot \calW(N, \frakt) \in \Z$ of $\vp^{N}$ and $\calW(N, \frakt)$.
This intersection number can be interpreted as the mapping degree of the map $\vp^{N}|_{S^{k-1}} : S^{k-1} \to \circPi(N, \frakt) \simeq S^{k-1}$. 
Here the given orientation of $H^+(N; \R)$ is used to determine the sign of the mapping degree.
The precise convention of the sign is given in Ruberman~\cite{MR1874146} (in the case that $b^+=1$) and we do not repeat it here.
(In fact, for our non-vanishing result in \cref{subsection: Non-vanishing theorem} we do not have to determine the sign.)

Let $(M, \fraks_0)$ be a smooth oriented closed $\spc$ $4$-manifold equipped with a homology orientation.
Suppose that $b^+(M) \geq 2$ and $d(\fraks_0) = 0$.
For natural numbers $k \geq 1$ and $l \geq k$, we consider the $4$-manifold $N$ defined by 
\[
N = k\CP^2 \# l(-\CP^2) = \#_{i=1}^k \CP^2_{i} \# (\#_{j=1}^l (-\CP^2)).
\]
Here $\CP^2_{i}$ and $-\CP^2$ are copies of $\CP^{2}$ and $-\CP^{2}$ respectively, where $-\CP^{2}$ is the projective plane equipped with the opposite orientation. 
Let $H_{i}$ and $E$ be a generator of $H^2(\CP^2_{i})$ and one of $H^2(-\CP^2)$ respectively.
The basis $H_1, \ldots, H_k$ of $H^{+}(N;\R)$ gives a homology orientation of $N$.
Let $\frakt$ be the $\spc$ structure on $N$ determined by 
\begin{align}
c_1(\frakt) = \sum_{i=1}^k H_{i} + \sum_{j=1}^l E.
\label{eq: first Chern class defining property}
\end{align}
We set 
\[
(X, \fraks) := (M \# N, \fraks_0\#\frakt).
\]
Then we have $d(\fraks) = -k$.
Let $(g^{M}, \mu) \in \Pi(M, \fraks_0)$ be a generic point and $\vp^{N} : D^k \to \Pi(N, \frakt)$ be a generic map satisfying $\vp^{N}(S^{k-1}) \subset \circPi(N, \frakt)$.
Then we can consider the intersection number $\vp^{N} \cdot \calW(N, \frakt)$ as in the previous paragraph.
Let $B_{M}^{4} \subset M$ and $B_{N}^{4} \subset N$ be small balls.
Assume that the metric $g^{M}$ and the metric-component of each point in the image of $\vp^{N}$ are cylindrical near the boundaries of $B_{M}^{4}$ and $B_{N}^{4}$ respectively, and the boundaries of $B_{M}^{4}$ and $B_{N}^{4}$ are mutually isometric.
We also assume that $\mu$ and the self-dual $2$-form-component of each point in the image of $\vp^{N}$ are supported on the complement of $B_{M}^{4}$ and $B_{N}^{4}$ respectively.
Then we may define $\vp : D^{k} \to \Pi(X, \fraks)$ by $\vp(x) := (g^{M}, \mu)\#\vp^{N}(x)$, where $\#$ is the connected sum consisting of $M \setminus B^{4}_{M}$, $N \setminus B^{4}_{N}$, and a cylinder with the standard product metric and of sufficiently large length.
Since $\vp$ is also a generic map and we have $d(\fraks) = -k$, the moduli space $\calM(\vp, \fraks)$ parameterized by $\vp$ on $D^k$ is a $0$-dimensional compact manifold.

The following proposition is discussed by Ruberman in the case that $b^+(N)=1$ and $\vp^{N} \cdot \calW(N, \frakt) = \pm1$ in Section~4 of \cite{MR1874146}.
A similar argument, spelled out by Baraglia and the author in \cite{BK}, works for general $b^+(N) \geq 1$ and $\vp^{N} \cdot \calW(N, \frakt)$.

\begin{prop}[\cite{MR1874146}, \cite{BK}]
\label{prop : wall-crossing+gluing}
Suppose that the length of the cylinder used to define the connected sum is sufficiently large.
Then
\begin{align}
\# \calM(\vp, \fraks) = \pm(\vp^{N} \cdot \calW(N, \frakt)) \cdot \SWinv(M, \fraks_0)
\label{eq : wall-crossing+gluing}
\end{align}
holds in $\Z$.
Here $\SWinv(M, \fraks_0)$ is the \SW invariant of $(M, \fraks_0)$.
\end{prop}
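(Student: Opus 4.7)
The strategy is the classical neck-stretching/gluing scheme of Ruberman; I outline how it proceeds in our higher-$b^{+}$ setting. The first observation is a dimension count on $N$: for $N=k\CP^{2}\#l(-\CP^{2})$ with the $\spc$ structure $\frakt$ specified by \eqref{eq: first Chern class defining property}, a direct computation gives $d(\frakt)=-(k+1)$, so the parameterized moduli space $\calM(\vp^{N},\frakt)$ on $N$ has expected dimension $d(\frakt)+k=-1$. Hence for generic $\vp^{N}$ there is no irreducible solution on the $N$ side, and contributions can arise only from reducibles, which occur precisely at points $p_{0}\in D^{k}$ with $\vp^{N}(p_{0})\in\calW(N,\frakt)$. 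Since $\calW(N,\frakt)$ is codimension $k$ in $\Pi(N,\frakt)$ and $\vp^{N}(S^{k-1})\subset\circPi(N,\frakt)$, genericity of $\vp^{N}$ ensures a finite transverse intersection counted algebraically by $\vp^{N}\cdot\calW(N,\frakt)$. On the $M$ side, $d(\fraks_{0})=0$ and genericity of $(g^{M},\mu)$ yield that $\calM(M,(g^{M},\mu))$ is a finite set of irreducible solutions with signed count $\SWinv(M,\fraks_{0})$.

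Next I would carry out the compactness part. Take a sequence of neck lengths $L_{i}\to\infty$ and a corresponding sequence of solutions in $\calM(\vp,\fraks)$ on the resulting $X$. Standard \SW compactness (energy bounds from the Weitzenb\"{o}ck formula and bounded geometry away from the neck) implies, after passing to a subsequence and gauge, convergence to a pair of finite-energy solutions on $M$ and $N$, matching along the cylindrical limit $\R\times S^{3}$. The dimensional data above force the $N$-piece to be reducible and the $N$-parameter $p_{0}$ to lie in $(\vp^{N})^{-1}(\calW)$, while $b^{+}(M)\ge 2$ and the genericity of $(g^{M},\mu)$ force the $M$-piece to be an irreducible element of $\calM(M,(g^{M},\mu))$. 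Thus for all sufficiently long necks every element of $\calM(\vp,\fraks)$ sits in an arbitrarily small neighborhood of a broken configuration of the form $([A_{M},\Phi_{M}],[A_{N},0],p_{0})$.

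The converse is the gluing step, which I would carry out using the connect-sum gluing construction for the \SW equations, adapted to the presence of a reducible on $N$. At a broken configuration with $\vp^{N}$ transverse to $\calW(N,\frakt)$ at $p_{0}$, the linearized deformation problem combined with the $k$-parameter family $\vp^{N}$ and the gluing parameter is Fredholm of index $d(\fraks)+k=0$, with its obstruction governed by the wall normal direction $T\Pi(N,\frakt)/T\calW(N,\frakt)$. Transversality of $\vp^{N}$ to $\calW(N,\frakt)$ at $p_{0}$ yields a transverse zero of the obstruction map, so for all sufficiently long necks the implicit function theorem produces, near each broken configuration $([A_{M},\Phi_{M}],[A_{N},0],p_{0})$, exactly one honest solution to the \SW equations on $X$. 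This establishes, for sufficiently long neck, a bijection between $\calM(\vp,\fraks)$ and $\calM(M,(g^{M},\mu))\times(\vp^{N})^{-1}(\calW)$.

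The main obstacle is the sign bookkeeping needed to establish \eqref{eq : wall-crossing+gluing}. One must verify that the sign with which each glued solution enters $\#\calM(\vp,\fraks)$ is the product of the sign of $[A_{M},\Phi_{M}]$ in $\SWinv(M,\fraks_{0})$ and the local intersection index of $\vp^{N}$ with $\calW(N,\frakt)$ at $p_{0}$, where the latter is normalized so that the total signed count equals the mapping degree of $\vp^{N}|_{S^{k-1}}:S^{k-1}\to\circPi(N,\frakt)\simeq S^{k-1}$. This requires a careful comparison of the homology orientations of $X$, $M$, and $N$ under the connect-sum decomposition, together with the orientation convention used in the mapping-degree definition of $\vp^{N}\cdot\calW(N,\frakt)$. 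Ruberman carries this out in detail for $b^{+}(N)=1$ in~\cite{MR1874146}, and the same argument extends mutatis mutandis to arbitrary $k\ge 1$; this is the origin of the $\pm$ in the statement. Fortunately, the non-vanishing applications pursued in the next subsection need only the unsigned product formula, so the sign ambiguity is immaterial for our purposes.
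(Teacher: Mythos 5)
Your sketch is correct and follows exactly the Ruberman wall-crossing-plus-gluing scheme that the paper itself defers to: the paper omits the proof, citing Ruberman~\cite{MR1874146} and simply noting that the argument for $b^{+}(N)=1$ extends to general $b^{+}(N)=k\geq 1$. Your dimension counts ($d(\frakt)=-(k+1)$, hence no irreducibles on the $N$-side for generic $\vp^{N}$; $d(\fraks_{0})=0$ on the $M$-side), the neck-stretching degeneration to broken configurations pairing an irreducible on $M$ with a reducible on $N$ at a wall parameter, the obstructed gluing of index $d(\fraks)+k=0$, and the deferral of the sign normalization to Ruberman's conventions all match the intended argument.
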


\begin{proof}
We may adapt Theorem~4.4 in \cite{BK} after a small modification.
To explain it, first we summarize a part of the statement of the theorem.
The setting needed to describe the part of the statement of the theorem is as follows.
Let $(M, \fraks_0)$ $(N, \frakt),$ and $(X, \fraks)$ be as above.
Let $B$ be an oriented closed manifold of dimension $k$.
Let $M \to E_{M} \to B$ and $N \to E_{N} \to B$ smooth fiber bundles equipped with fiberwise $\spc$ structures  whose restriction on fibers are isomorphic to $\fraks_{0}$ and $\frakt$ respectively.
Assume that $E_{M}$ and $E_{N}$ admit sections, and let us construct the fiberwise connected sum $X \to E_{X} \to B$ along these sections.
This bundle $E_{X}$ is equipped with the fiberwise $\spc$ structure obtained from that of $E_{M}, E_{N}$, and the restriction this fiberwise $\spc$ structure on a fiber is isomorphic to $\fraks$.
Let us fix a homology orientation of $X$.
Since we have $d(\fraks)=-\dim{B}$ and $b^{+}(X) \geq \dim{B} + 2$, we can define the families \SW invariant $FSW(E_{X},\fraks) \in \Z$ by counting the parametrized moduli space for $E_{X}$ over $B$.

Theorem~4.4 in \cite{BK} gives a way to compute $FSW(E_{X},\fraks)$ as follows.
Fix a fiberwise metric of $E_{N}$.
Then we obtain a bundle of harmonic self-dual $2$-forms, denoted by $H^{+}(N) \to H^{+}(E_{N}) \to B$.
Since the space of fiberwise metrics is contractible, the isomorphism class of $H^{+}(E_{N})$ is independent of the choice of fiberwise metric.
Theorem~4.4 asserts that we have
\begin{align*}
FSW(E_{X},\fraks) = \SWinv(M,\fraks_{0}) \cdot \left<e(H^{+}(E_{N})), [B]\right>.
\end{align*}
(Note that $e(H^{+}(E_{N}))$ is denoted by $e(H^{+}(N))$ in \cite{BK}.)
We shall explain how to modify the above gluing result to show \eqref{eq : wall-crossing+gluing}.

First, note that, although the parameter space of the families is a closed manifold $B$ in Section~4 in \cite{BK}, the proof of the gluing formula localizes only around the points in $B$ on which wall-crossing for $N$ happens.
This localization, given as Corollary~7.9 in \cite{BK}, can be understood easily as follows.
Let $\{\eta_{N}(b)\}_{b \in B}$ be a families perturbation consisting of a fiberwise imaginary self-dual $2$-form over $E_{N}$.
(This corresponds to $\varphi^{N}$ we discussed, but for a while we fit our notation to that of \cite{BK} for explanation.)
Since we assumed that $d(\frakt)<0$, if the families perturbation $\{\eta_{N}(b)\}_{b \in B}$ was taken to be generic, the moduli space on the fiber $(E_{N})_{b}$ over $b$ with respect to $\eta_{N}(b)$ is empty unless the \SW equations on $(E_{N})_{b}$ have a reducible solution for $\eta_{N}(b)$.
The gluing argument for the moduli space on $M$, which is possibly non-empty,  and the empty moduli space on $N$ yields just the empty moduli space on $X$.
Therefore non-trivial gluing occurs only around the points $b$'s in $B$ for which wall-crossing for $(E_{N})_{b}$ happens for $\eta_{N}(b)$.

Because of this localization, we may use $B=D^{k}$ as the parameter space of families provided that wall-crossing does not happen over $\del B$.
Let us describe it more precisely.
Assume that $E_{M}$ and $E_{N}$ are trivialized over $B=D^{k}$: $E_{M} = B \times M, E_{N} = B \times N$.
Let $\eta_{M}=\{\eta_{M}(b)\}_{b \in B}$ and $\eta_{N}=\{\eta_{N}(b)\}_{b \in B}$ be generic families perturbations over $E_{M}$ and $E_{N}$ respectively.
Assume that the \SW equations on $(N, \eta_{N}(b))$ do not have reducible solutions for all $b \in \del B$.
Then we obtain the following gluing formula:
\begin{align}
\label{eq: gluing formula in BK2}
FSW(E_{X},\fraks,\eta_{M},\eta_{N}) = \left<e(H^{+}(E_{N}), \eta_{N}), [B, \del B]\right> \cdot \SWinv(M,\fraks_{0}).
\end{align}
Here let us explain notations appearing in \eqref{eq: gluing formula in BK2}.
First, $FSW(E_{X},\fraks,\eta_{M},\eta_{N}) \in \Z$ denotes the number obtained by counting the parameterized moduli space for $E_{X}$ with respect to the fiberwise metric on $E_{X}$ obtained from $\eta_{M}$ and $\eta_{N}$.
(The precise construction of the fiberwise metric on $E_{X}$ is described in Section~7 in \cite{BK}.)
Second, $e(H^{+}(E_{N}), \eta_{N}) \in H^{k}(B, \del B ;\Z)$ is the relative Euler class with respect to $\eta_{N}$ defined as follows.
For each $b \in B$, denote by $g_{N}(b)$ the underlying metric of $\eta_{N}(b)$.
Set $g_{N} = \{g_{N}(b)\}_{b \in B}$.
For a metric $h$ on $N$, let $H^{+}(N,h)$ denote the space of imaginary self-dual harmonic $2$-forms of $N$ with respect to the metric $h$.
Then we can define a vector bundle $H^{+}(E_{N}, g_{N}) \to B$ with fiber $H^{+}(N,g_{N}(b))$.
Regard $\eta_{N}$ as a section $\eta_{N} : B \to \Pi(N) \times B$ of the trivial bundle $\Pi(N) \times B\to B$. 
Let
\[
p : \Pi(N) \to \bigsqcup_{h \in \Met(N)} H^{+}(N,h)
\]
be the fiberwise $L^{2}$-projection.
Let $s : B \to H^{+}(E_{N}, g_{N})$ be the section defined by
\[
s(b) = p(F_{A_{N}}^{+_{g_{N}(b)}}-\eta_{N}(b)),
\]
where $A_{N}$ is a fixed smooth reference connection on $N$.
One may see that $s(b) \neq 0$ for all $b \in \del B$ because of the assumption that the \SW equations on $(N, \eta_{N}(b))$ do not have reducible solutions for all $b \in \del B$.
Therefore we may define the relative Euler class $e(H^{+}(E_{N}), \eta_{N}) \in H^{k}(B, \del B ;\Z)$ with respect to $\eta_{N}$ as the pull-back of the Thom class of the bundle $H^{+}(E_{N}) \to B$ by $s$.

As in the proof of Proposition~7.2 of \cite{BK}, we may take $\eta_{N}$ so that $\eta_{N}$ meets the wall $\calW(N, \frakt)$ transversely in a neighborhood of the zero points of the section $s : B \to H^{+}(E_{N}, g_{N})$, and that $\eta_{N}$ does not meet the wall outside the neighborhood.
More precisely, $\eta_{N}$ can be taken to meet the wall once around each zero point $b$ of $s$, and that the sign of the intersection between $s$ and the zero section of $H^{+}(N)$ at $b$ corresponds to that of between $\eta_{N}$ and the wall around $b$.
Thus we have that $\left<e(H^{+}(E_{N}), \eta_{N}), [B, \del B]\right> =  \pm \eta_{N} \cdot \calW(N, \frakt)$.
Coming back to our current notation of this paper, $\eta_{N}$ denotes $\varphi^{N}$.
This and \eqref{eq: gluing formula in BK2} complete the proof.
\end{proof}

\subsection{Higher-dimensional wall-crossing and embedded surfaces}
\label{subsection: description of higher-dimensional wall-crossing phenomena in terms of embedded surfaces}

In~\cite{Konno} the author has given a description of higher-dimensional wall-crossing phenomena (i.e. wall-crossing for general $b^{+} \geq 1$)  in terms of embedded surfaces.
In this subsection we recall and rewrite a part of it as a convenient form for the setting of this paper.
We also introduce some concepts, which will be used to state the non-vanishing result in \cref{subsection: Non-vanishing theorem}.
For natural numbers $k \geq 1$ and $l \geq k$, set $N = k\CP^2 \# l(-\CP^2) = \#_{i=1}^k \CP^2_{i} \# (\#_{j=1}^l (-\CP^2))$ and let $\frakt$ be the $\spc$ structure on $N$ given in \cref{subsection: Ruberman's combination of wall-crossing and gluing arguments}.

Let $\Si_i^{+}, \Si_i^{-} \in V(\calK(N, \frakt))$ $(i = 1, \ldots, k)$ be vertices such that 
\begin{align}
i \neq i' \Rightarrow
\Si_i^+ \cap \Si_{i'}^+ = \Si_i^+ \cap \Si_{i'}^- = \Si_i^- \cap \Si_{i' }^- = \emptyset
\label{eq: disjoint condition}
\end{align}
holds.
We write the collection of $2k$-surfaces $\Si_i^{+}, \Si_i^{-}$ $(1 \leq i \leq k)$ as
\[
\bSi = \{\Si_i^{\epsilon}\}_{1 \leq i \leq k, \epsilon \in \{+,-\}}.
\]
Set
\[
\Signk := \{+, -\}^{\{1, \ldots, k\}}.
\]
For each $\bep = \{ \epsilon_{i} \}_{i=1}^{k} \in \Signk$,
let $\si_{\bep} \in S(\calK(N, \frakt))$ be the $(k-1)$-simplex defined by
\[
\si_{\bep} := \Set{ \Si^{\epsilon_{i}}_{i} | 1 \leq i \leq k}.
\]
We may also define a subcomplex $K(\bSi)$ of $\calK(N, \frakt)$ by giving the set of simplices as
\[
S(K(\bSi)) := \Set{ \Set{ \Si^{\epsilon_{i}}_{i} | i \in J } | \{\epsilon_{i}\}_{i \in J} \in \{+, -\}^{J},\ J \subset \{1, \ldots, k\}}.
\]
The set $\Set{ \si_{\bep} |  \bep \in \Signk }$ can be regarded as the set of $(k-1)$-simplices of $K(\bSi)$.
Note that the geometric realization $|K(\bSi)|$ is homeomorphic to $S^{k-1}$.

\begin{rem}
\label{rem: regarding a subcomplex on connected sum}
Let $(M, \fraks_{0})$ be an oriented closed $\spc$ $4$-manifold.
Let us consider the $\spc$ $4$-manifold $(X, \fraks) := (M \# N, \fraks_0\#\frakt)$.
Here the connected sum $M \# N$ is defined by gluing a ``punctured" $M$ and $N$ along a puncture belonging to the complement of all $\Si^{\epsilon}_{i}$'s in $N$.
Then each $\Si^{\epsilon}_{i}$ can be regarded as an embedded surface in $X$ and it also violates the adjunction inequality with respect to $\fraks$.
Therefore $K(\bSi)$ can be regarded as a subcomplex of $\calK(X, \fraks)$.
We will fix an orientation of each $\si_{\bep}$ as below.
Then we can consider the fundamental class of $K(\bSi)$, and thus obtain the homology class $[K(\bSi)] \in H_{k-1}(\calK(X, \fraks);\Z)$.
\end{rem}

Fix data $g^{N}$, $N(\cdot)$, and $a(\cdot)$ on the $4$-manifold $N$ as  in \cref{section: Family of Riemannian metrics}.
Then we can define $R^{N}(\si_{\bep}) = R(\si_{\bep}, g^{N}, N(\cdot), a(\cdot))$ as in \cref{Vanishing property of the \SW moduli space on a family}.
Set
\[
R^{N}_{\bSi} := \max_{\bep \in \Signk} R^{N}(\si_{\bep}).
\]
For a collection of positive numbers $\{R_{\tau}\}_{\tau \prec \si_{\bep}, \bep \in \Signk}$, we may take maps
\begin{align}
\{\vp^{N}_{\tau, R_{\tau}} : [0, R_{\tau}]^{\tau} \to \Pi(N, \frakt)\}_{\tau \prec \si_{\bep}, \bep \in \Signk}
\label{eq: maps using collection of pos numbers}
\end{align}
inductively on the dimension of $\tau$ by applying the same procedure in \cref{subsection: Definition of the invariant}.
Here we should note that the image of $\vp^{N}_{\si_{\bep}, R_{\si_{\bep}}}$ is not contained in $\circPi(N, \frakt)$ in general even if we take $\vp^{N}_{\si_{\bep}, R_{\si_{\bep}}}$ to be generic; $\vp^{N}_{\si_{\bep}, R_{\si_{\bep}}}$ generically intersects the wall at points.
However, the partial image $\vp^{N}_{\si_{\bep}, R_{\si_{\bep}}}(\extdel(\si_{\bep}, R_{\si_{\bep}}))$ satisfies that
\begin{align}
\vp^{N}_{\si_{\bep}, R_{\si_{\bep}}}(\extdel(\si_{\bep}, R_{\si_{\bep}})) \subset \Met(N) \cap \circPi(N, \frakt)
\label{eq: subset relation for partial image}
\end{align}
if $R_{\si_{\bep}} \geq R^{N}_{\bSi}$ as in \cref{subsection: Definition of the invariant}.
This is  based on the vanishing result, \cref{prop: vanishing on certain face}, and therefore we have a stronger property:
there is no solution to the unperturbed \SW equations for any metric in $\vp^{N}_{\si_{\bep}, R_{\si_{\bep}}}(\extdel(\si_{\bep}, R_{\si_{\bep}}))$.
Furthermore, we also have
\begin{align}
\vp^{N}_{\si_{\bep}, R_{\si_{\bep}}}(\del\left([0, R_{\si_{\bep}}]^{\si_{\bep}}\right)) \subset \circPi(N, \frakt)
\label{eq: subset relation for partial partial image}
\end{align}
for any positive number $R_{\si_{\bep}}$ with $R_{\si_{\bep}} \geq R^{N}_{\bSi}$.
Indeed, there is no solution to the \SW equations for any element in the left-hand side of \eqref{eq: subset relation for partial partial image}.
It follows from the same argument used to prove \cref{lem: vanishing on boundary}.
We therefore obtain the relation \eqref{eq: subset relation for partial partial image} from consideration of reducible solutions.

For $R>0$ we define a space $D^{k}_{\bSi, R}$ as the quotient
\begin{align}
D^{k}_{\bSi, R} := \left( \bigsqcup_{\bep \in \Signk} [0, R]^{\si_{\bep}} \right) / \sim,
\label{eq: disk obtained as a quotient}
\end{align}
where $\sim$ is the equivalence relation generated by the following identification:
for $\bep, \bep' \in \Signk$ and for a common face $\tau$ of $\si_{\bep}$ and $\si_{\bep'}$,
we identify the subset $[0, R]^{\tau} \subset [0, R]^{\si_{\bep}}$ and the subset $[0, R]^{\tau} \subset [0, R]^{\si_{\bep'}}$ by the identity map $[0, R]^{\tau} \to [0, R]^{\tau}$.

\begin{defi}
\label{defi: orientation of bSi}
We call a collection of orientations of $\si_{\bep}$'s an {\it orientation of} $\bSi$ if the orientations of $[0, R]^{\si_{\bep}}$'s  defined by that of $\si_{\bep}$'s induce an orientation of $D^{k}_{\bSi, R}$ for $R > 0$.
\end{defi}

Note that an orientation of $\bSi$ induces an orientation of $|K(\bSi)| \cong S^{k-1}$.
Let $\{R_{\tau}\}_{\tau \prec \si_{\bep}, \bep \in \Signk}$ be a collection of positive numbers and $R$ be a positive number such that $R = R_{\si_{\bep}}$ holds for any $\bep \in \Signk$.
Then we can take maps \eqref{eq: maps using collection of pos numbers}.
By the construction of $\vp_{\bullet}$, the collection of maps $\{\vp^{N}_{\si_{\bep}, R}\}_{\bep \in \Signk}$
gives rise to a map
\[
\vp^{N}_{\bSi, R} : D^{k}_{\bSi, R} \to \Pi(N, \frakt).
\]
The space $D^{k}_{\bSi, R}$ is homeomorphic to a $k$-dimensional disk.
The boundary $S^{k-1}_{\bSi, R} := \del D^{k}_{\bSi, R}$ can be expressed as
\[
S^{k-1}_{\bSi, R} = \left( \bigsqcup_{\bep \in \Signk} \extdel(\si_{\bep}, R) \right) / \sim.
\]
We note that $\vp^{N}_{\bSi, R}(S^{k-1}_{\bSi, R}) \subset \Met(N) \cap \circPi(N, \frakt)$ holds by \eqref{eq: subset relation for partial image} and moreover there is no solution to the unperturbed \SW equations for any metric in $\vp^{N}_{\bSi, R}(S^{k-1}_{\bSi, R})$.
We can therefore define the intersection number $\vp^{N}_{\bSi, R} \cdot \calW(N, \frakt)$ for this $\vp^{N}_{\bSi, R}$.

\begin{rem}
\label{rem: foundation of disjoint counting}
Let $\pi : \bigsqcup_{\bep \in \Signk} [0, R]^{\si_{\bep}} \to D^{k}_{\bSi, R}$ be the quotient map.
We note that the subspace
\begin{align}
\Set{ p \in \bigsqcup_{\bep \in \Signk} [0, R]^{\si_{\bep}} | \#\pi^{-1}(\pi(p))>1} 
\label{eq: identified area}
\end{align}
of $\bigsqcup_{\bep \in \Signk} [0, R]^{\si_{\bep}}$ is contained in
\begin{align}
\bigsqcup_{\bep \in \Signk} \del\left([0, R]^{\si_{\bep}}\right).
\label{eq: disjoint union of boundaries}
\end{align}
We also note that there is no solution to the \SW equations for any element in the image of \eqref{eq: disjoint union of boundaries} by $\vp^{N}_{\si_{\bep}, R}$'s if $R \geq R^{N}_{\bSi}$ as we mentioned to obtain \eqref{eq: subset relation for partial partial image}.
\end{rem}

We here introduce a term in order to describe wall-crossing phenomena in terms of embedded surfaces.
In general, for an oriented closed $4$-manifold $W$,
we often identify $H^{2}(W;\Z)$ with $H_{2}(W;\Z)$ by the Poincar\'{e} duality, and
we denote the evaluation $\left<a, b\right>$ for $a \in H^{2}(W;\Z), b \in H_{2}(W;\Z)$ by $a \cdot b$.

\begin{defi}
\label{defi: wall-crossing collection of surfaces}
For vertices $\Si_i^{+}, \Si_i^{-} \in V(\calK(N, \frakt))$ $(i = 1, \ldots, k)$ satisfying \eqref{eq: disjoint condition}, 
we call the collection $\bSi = \{\Si_i^{\epsilon}\}_{1 \leq i \leq k, \epsilon \in \{+,-\}}$ a {\it wall-crossing collection of surfaces in} $(N, \frakt)$  if the following two conditions are satisfied:
\begin{enumerate}
\item Let $i, i' \in \{1, \ldots, k\}$. 
If $i \neq i'$, then $H_{i'} \cdot [\Si_i^{\pm}] = 0$ holds.
\item For each $i \in \{1, \ldots, k\}$, the two integers
\[
(c_1(\frakt) \cdot  [\Si_i^{+}]) \cdot (H_i \cdot [\Si_i^{+}])
\]
and
\[
(c_1(\frakt) \cdot  [\Si_i^{-}]) \cdot (H_i \cdot [\Si_i^{-}])
\]
are non-zero and have different signs.
\end{enumerate}
Here $H_{i}$ is the one in \cref{subsection: Ruberman's combination of wall-crossing and gluing arguments}.
\end{defi}

Concrete examples of wall-crossing collections of surfaces will be given in \cref{subsection: Examples}.
The description of wall-crossing phenomena in term of embedded surfaces is given as the following \lcnamecref{prop: description of higher-dim. wall-crossing}.
An essential part of its proof has been given also in the proof of Lemma~3.3 of \cite{Konno}.

\begin{prop}[\cite{Konno}]
\label{prop: description of higher-dim. wall-crossing}
Let $\bSi = \{\Si_i^{\epsilon}\}_{1 \leq i \leq k, \epsilon \in \{+,-\}}$ be a wall-crossing collection of surfaces.
Assume that $\Si_i^+$ and $\Si_i^-$ intersect transversally for each $i$.
Let us take a collection of positive numbers $\{R_{\tau}\}_{\tau \prec \si_{\bep}, \bep \in \Signk}$ and a positive number $R$.
Suppose that $R = R_{\si_{\bep}}$ holds for any $\bep \in \Signk$ and that $R \geq R^{N}_{\bSi}$.
If we take $\{R_{\tau}\}_{\tau \prec \si_{\bep}, \bep \in \Signk}$ so that $R$ is large enough,
then $\vp^{N}_{\bSi, R} : D^{k}_{\bSi, R} \to \Pi(N, \frakt)$ satisfies that $\vp^{N}_{\bSi, R} \cdot \calW(N, \frakt) = \pm1$.
\end{prop}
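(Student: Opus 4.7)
The plan is to rely on the interpretation of $\vp^{N}_{\bSi, R} \cdot \calW(N, \frakt)$ as a mapping degree, established in the paragraph introducing the intersection number in \cref{subsection: Ruberman's combination of wall-crossing and gluing arguments}, and to compute this degree from the asymptotic behaviour of the harmonic self-dual projection of $c_{1}(\frakt)$ along stretched cylindrical necks, as analysed in \cite{Konno}. By \eqref{eq: subset relation for partial image} the boundary image $\vp^{N}_{\bSi, R}(S^{k-1}_{\bSi, R})$ lies in $\Met(N) \cap \circPi(N, \frakt)$, so the intersection number equals the mapping degree of the composition
\[
S^{k-1}_{\bSi, R} \xrightarrow{\vp^{N}_{\bSi, R}} \circPi(N, \frakt) \xrightarrow{q} H^{+}(N;\R) \setminus \{0\},
\]
where $q$ sends a metric $h$ to the $h$-harmonic self-dual representative of $-c_{1}(\frakt)$, viewed as a class in $H^{+}(N;\R)$ via the Hodge decomposition. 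The orientation of $D^{k}_{\bSi, R}$ fixed by \cref{defi: orientation of bSi} and the orientation of $H^{+}(N;\R)$ coming from $H_{1}, \ldots, H_{k}$ fix the sign conventions.

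The analytic input from \cite{Konno} describes $q$ along the stretched families. For a metric $h$ containing a long cylindrical neck of length $\ell$ around an embedded surface $\Si$ with $[\Si]^{2} = 0$, the harmonic self-dual projection of $c_{1}(\frakt)$ splits as a uniformly bounded term plus a contribution growing linearly in $\ell$ whose direction in $H^{+}(N;\R)$ is, up to a positive metric-dependent factor, $(c_{1}(\frakt) \cdot [\Si])$ times the self-dual projection of the Poincar\'e dual of $\Si$. In particular, its $H_{i}$-component is proportional, with a positive factor, to $(c_{1}(\frakt) \cdot [\Si])(H_{i} \cdot [\Si])$. Together with \cref{lem: cylindrical structure for any point on the family} and the disjointness built into \cref{cond: condition for a()}, this ensures that on each cell $[0,R]^{\si_{\bep}} \subset D^{k}_{\bSi, R}$ the necks around distinct $\Si_{i}^{\epsilon_{i}}$'s contribute independently to $q(h)$ and their leading contributions add.

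Combining this with the wall-crossing hypotheses on $\bSi$: condition (1) forces $H_{i'} \cdot [\Si_{i}^{\pm}] = 0$ for $i \neq i'$, so stretching the $\Si_{i}^{\pm}$-neck affects $q$ only in the $H_{i}$-direction to leading order; condition (2) forces $\Si_{i}^{+}$ and $\Si_{i}^{-}$ to produce contributions of opposite signs in that direction. Hence, for $R \geq R^{N}_{\bSi}$ sufficiently large, $q \circ \vp^{N}_{\bSi, R}|_{S^{k-1}_{\bSi, R}}$ is homotopic in $H^{+}(N;\R) \setminus \{0\}$ to the map sending each top-dimensional face $\extdel(\si_{\bep}, R)$ into the orthant of $H^{+}(N;\R)$ determined by the sign pattern $(\delta_{i} \epsilon_{i})_{i=1}^{k}$, where each $\delta_{i} \in \{+,-\}$ is the common sign assigned by condition (2). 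Under the identifications $S^{k-1}_{\bSi, R} \cong |K(\bSi)| \cong \partial [-1,1]^{k}$ and $H^{+}(N;\R) \setminus \{0\} \simeq \partial [-1,1]^{k}$, this map is a homeomorphism of $(k-1)$-spheres, so its degree is $\pm 1$.

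The principal technical obstacle is the asymptotic statement of the second paragraph, which requires uniform control of harmonic self-dual forms on manifolds with several simultaneously stretched necks, including on the corners where multiple neck parameters are large. This analysis was carried out in \cite{Konno} for a product-of-intervals parameter space; in the present setting the parameter space $D^{k}_{\bSi, R}$ is instead assembled from simplicial cubes glued along common faces, but by \cref{lem: cylindrical structure for any point on the family} the relevant cylindrical structure is uniform across the identifications, so the estimates of \cite{Konno} apply with only cosmetic modifications.
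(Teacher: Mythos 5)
Your proposal is correct and follows essentially the same route as the paper: the paper's own proof just verifies Condition~1 of \cite{Konno} (choosing $V^+ = \mathrm{span}(H_1,\ldots,H_k)$ so that the degree of the map $F : \mathcal{P} \to (V^+)^\ast$ is $\pm 1$) and then invokes the argument of Lemma~3.3 there, whereas you unpack what that argument is — the mapping-degree reformulation via the harmonic self-dual projection of $c_1(\frakt)$, the asymptotic direction of that projection along stretched necks, and the orthant/cube degree count that conditions (1) and (2) of the wall-crossing definition enable. One small imprecision: the final map need not literally be a homeomorphism of cube boundaries but only homotopic (through $H^+(N;\R)\setminus\{0\}$) to one, which is what gives degree $\pm 1$; as written, the sentence slightly overstates what the asymptotics yield, though the conclusion is unaffected.
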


\begin{proof}
Assuming that $\Si_i^+$ and $\Si_i^-$ intersect transversally for each $i$, we have the following alternative simple description of the family $\vp^{N}_{\bSi, R}$ by taking suitable additional data $g^{N}$, $N(\cdot)$, and $a(\cdot)$ in \cref{section: Family of Riemannian metrics}.
This construction is originally due to Fr{\o}yshov~\cite{MR2052970}.
The construction is based on $k$-parameter families of metrics obtained from simultaneous stretches of neighborhoods of disjoint $k$-surfaces.
Roughly, the desired whole family is given by patching all of the possible stretching families, corresponding to all combinations of disjoint $k$-surfaces belonging to $\bSi$.
Note that such a simultaneous stretching cannot be considered for surfaces intersecting each other, say $\Si_{i}^{+}$ and $\Si_{i}^{-}$.

To spell out this construction of Riemannian metrics precisely,
take a Riemannian metric $g^{N} \in \Met(N)$ such that a neighborhood of the sphere bundle of the normal bundle $\nu(\Si_i^{\pm})$ is isometric to $[0,1] \times S^{1} \times \Si_i^{\pm}$.
Here $[0,1] \times S^{1} \times \Si_i^{\pm}$ is equipped with the standard product metric as in \cref{subsection: Construction of a family of Riemannian metrics,Vanishing property of the \SW moduli space on a family}.
For $i \neq j$, we may assume that the cylindrical parts $[0,1] \times S^{1} \times \Si_i^{\epsilon_i}$ and $[0,1] \times S^{1} \times \Si_j^{\epsilon_j}$ are disjoint for any $\epsilon_i, \epsilon_j \in \{+, -\}$.
(This is achieved by taking suitable $N(\cdot)$ and $a(\cdot)$.)
We therefore obtain a family of Riemannian metrics 
\[
\bar{\phi}^{N}_{\bSi} : \R^k \to \Met(N)\ ;\ (r_1, \ldots, r_k) \mapsto G(r_1, \ldots, r_k),
\]
where $G(r_1, \ldots, r_k)$ is defined by
\begin{itemize}
\item replacing $[0,1] \times S^1 \times \Si_i^{+}$ with $[0,r_i] \times S^1 \times \Si_i^{+}$ from $g^{N}$ if $r_i \geq 0$, and
\item replacing $[0,1] \times S^1 \times \Si_i^{-}$ with $[0,-r_i] \times S^1 \times \Si_i^-$ from $g^{N}$ if $r_i \leq 0$
\end{itemize}
for each $i$.
For $R \geq R^{N}_{\bSi}$,
let $D^{k}(R)$ be the cuboid in $\R^{k}$ centered at the origin and with the length of each edge $2R$.
Then the restriction
\[
\phi^{N}_{\bSi} := \bar{\phi}^{N}_{\bSi}|_{D^{k}(R)} : D^{k}(R) \to \Met(N)
\]
coincides with the metric-component of $\vp^{N}_{\bSi, R}$ under the natural identification $D^{k}_{\bSi, R} \cong D^{k}(R)$.

Denote by $V^+$ the subspace of $H^2(N;\R)$ spanned by $H_1, \ldots, H_k$.
For a Riemannian metric $g$ on $N$, let $\scrH^{+_{g}}(N)$ be the space of self-dual harmonic 2-forms on $N$.
Define a linear isomorphism
\[
\varphi_{g} : \scrH^{+_{g}}(N) \to V^{+}
\]
as the composition of the following maps $i_{g}$ and $p_{V^{+}}$.
First, the map $i_{g}$ is an injective map
\[
i_{g} : \scrH^{+_{g}}(N) \to H^{2}(N;\R)
\]
defined as the restriction of the Hodge-isomorphism
\[
\scrH^{2}_{g}(N) \to H^{2}(N;\R)
\]
from the space of harmonic $2$-forms to the second cohomology.
Note that the image of $i_{g}$ varies with $g$: each metric gives a particular maximal-dimensional positive-definite subspace of $H^{2}(N;\R)$.
Second, $p_{V^{+}}$ is the projection
\[
p_{V^{+}} : H^{2}(N;\R) = V^{+} \oplus (V^{+})^{\perp} \to V^{+}.
\]
Here $(V^{+})^{\perp}$ is the orthogonal complement with respect to the intersection form.
Set $c=c_{1}(\frak{t})$ and define
\[
\Psi : \Met(N) \to \R^{k}
\]
by
\[
\Psi(g) := (c \cdot \varphi^{-1}_{g}(H_{i}))_{i=0}^{k}.
\]
Here we regard $\varphi^{-1}_{g}(H_{i})$ as an element of $H^{2}(X;\R)$ via $i_{g}$ and take the cup product with the cohomology class $c$.
For $R>0$, define
\[
\mathcal{F} = \mathcal{F}_{R} : D^{k}(R) \to \R^{k}
\]
by $\mathcal{F} := \Psi \circ \phi^{N}_{\bSi}$.

We claim that, if $0 \notin \mathcal{F}(\del D^{k}(R))$, the mapping degree of $\mathcal{F} : \del D^{k}(R) \to \R^{k} \setminus \{0\}$ coincides with that of 
$\phi^{N}_{\bSi}|_{\del D^{k}(R)} : \del D^{k}(R) \to \Met(N) \cap \circPi.$
(Note that the image of the restriction map $\vp^{N}_{\bSi, R}|_{S^{k-1}_{\bSi, R}}$ is contained in $\Met(N) \cap \circPi$ as we mentioned, and so is the image of $\phi^{N}_{\bSi}|_{\del D^{k}(R)}$.)
First, we see that the intersection points of the image of $\phi^{N}_{\bSi}$ with the wall $\calW(N, \frakt)$ correspond to the zero set of $\mathcal{F}$.
Recall that, for a metric $g$ on $N$, the condition that there exists a reducible solution to the unperturbed Seiberg--Witten equation for $g$ and $\frakt$ is equivalent to the condition that $c$ is orthogonal to $\scrH^{+_{g}}(N)$ with respect to the intersection form.
Since $\varphi_{g}$ is an isomorphism, the latter condition is equivalent to $\Psi(g)=0$.
This shows that $(\phi^{N}_{\bSi})^{-1}(\calW(N, \frakt))=\mathcal{F}^{-1}(0)$.
Next, to complete the proof of the coincidence of the degrees of $\mathcal{F}|_{\del D^{k}(R)}$ and $\phi^{N}_{\bSi}|_{\del D^{k}(R)}$,
we see that, generically, the signed count of each intersection point of the image of $\phi^{N}_{\bSi}$ with the wall $\calW(N, \frakt)$ coincides with the signed count of the corresponding intersection point of $\mathcal{F}$ with $0$.
Considering each intersection point separately,
it suffices to prove the correspondence in the case that the family of metrics $\phi^{N}_{\bSi}$ makes $\mathcal{F} : \del D^{k}(R) \to \R^{k} \setminus \{0\}$ a degree one map.
Let $p_{g} : \Omega^{+}_{g}(N) \to \scrH^{+_{g}}(N)$ be the $L^{2}$-projection for each metric $g$.
Since we have  $c \cdot \varphi^{-1}_{g}(H_{i}) = p_{g}\left(\frac{-1}{2\pi \sqrt{-1}}F_{A_{0}}^{+_{g}}\right) \wedge \varphi^{-1}_{g}(H_{i})$ and supposed that $\mathcal{F}|_{\del D^{k}(R)}$ is a degree one map, the family $\{p_{g}(F_{A_{0}}^{+_{g}})\}_{g \in \phi^{N}_{\bSi}(D^{k}(R))}$ generically intersects with $0$ once.
In other words, the family of metrics $\phi^{N}_{\bSi}(D^{k}(R))$ generically intersects with the wall $\calW(N, \frakt)$ once. 

The mapping degree of $\phi^{N}_{\bSi}|_{\del D^{k}(R)}$, which is that of
 $\vp^{N}_{\bSi, R}|_{S^{k-1}_{\bSi, R}} : S^{k-1}_{\bSi, R} \to \circPi(N, \frakt) \simeq S^{k-1}$, is the intersection number $\vp^{N}_{\bSi, R} \cdot \calW(N, \frakt)$.
By the last paragraph, we have that $\vp^{N}_{\bSi, R} \cdot \calW(N, \frakt)$ is the mapping degree of $\mathcal{F}$, and our remaining task is to show that the mapping degree of $\mathcal{F}$ is $\pm1$.
Set $\alpha_{i}^{\pm} := [\Si_{i}^{\pm}]$ for $i=1, \ldots, k$, and define
\[
\Hyp_{\Si_{i}^{\pm}} := \Set{(a_{1}, \ldots, a_{k}) \in \R^{k} | a_{i} = \frac{c \cdot \alpha_{i}^{\pm}}{H_{i} \cdot \alpha_{i}^{\pm}}}.
\]
We shall show that, for all $i$, the image $\mathcal{F}(F(\Si_{i}^{\pm},R))$ is contained in an $\epsilon$-neighborhood of $\Hyp_{\Si_{i}^{\pm}}$ for sufficiently small $\epsilon>0$.
Then the conditions on $(c \cdot \alpha_{i}^{\pm})\cdot(H_{i} \cdot \alpha_{i}^{\pm})$ in \cref{defi: wall-crossing collection of surfaces} imply that $\mathcal{F}$ has mapping degree $\pm1$.

Let $i \in \{1, \ldots, k\}$.
For a metric $g$ on $N$, define
\[
\omega_{i,g}^{\pm} := \varphi^{-1}_{g}(p_{V^{+}}(\alpha_{i}^{\pm})) \in \mathscr{H}^{g_{+}}(N)
\]
Since we have that $\omega_{i,g}^{\pm} = \varphi^{-1}_{g}((H_{i} \cdot \alpha_{i}^{\pm})H_{i}) = (H_{i} \cdot \alpha_{i}^{\pm}) \varphi^{-1}_{g}(H_{i})$,
 we obtain that
\begin{align}
\label{eq: quotient}
\frac{c \cdot [\omega_{i,g}^{\pm}]}{H_{i} \cdot \alpha_{i}^{\pm}}
= c \cdot \varphi^{-1}_{g}(H_{i}).
\end{align}

Let $r \in F(\Si_{i}^{\pm},R)$ and set $g=\phi^{N}_{\bSi}(r)$.
By Lemma~3.2 in \cite{Konno}, we have that
\[
|\alpha_{i}^{\pm} - [\omega_{i,g}^{\pm}]| \leq \frac{C}{R^{1/4}},
\]
where $C$ is a constant depends only on the metric outside neighborhoods of $\Si_{i}^{\pm}$.
Therefore, by \eqref{eq: quotient},
$c \cdot \varphi^{-1}_{g}(H_{i})$ is close to
$(c \cdot \alpha_{i}^{\pm})/(H_{i} \cdot \alpha_{i}^{\pm})$
for sufficiently large $R$.
Namely, the $i$-th component of $\mathcal{F}(r)$ is close to $(c \cdot \alpha_{i}^{\pm})/(H_{i} \cdot \alpha_{i}^{\pm})$, and this implies that $\mathcal{F}(F(\Si_{i}^{\pm},R))$ is contained in an $\epsilon$-neighborhood of $\Hyp_{\Si_{i}^{\pm}}$ for sufficiently small $\epsilon$.
This completes the proof of the proposition.
\end{proof}

\subsection{Non-vanishing theorem}

\label{subsection: Non-vanishing theorem}

In this subsection we prove a non-vanishing theorem for the cohomological invariant $\SWcoh(X, \fraks)$.
We also derive constraints connected with the adjunction inequality on configurations of embedded surfaces from the non-vanishing theorem.

Let $(M, \fraks_0)$ be a smooth oriented closed $\spc$ $4$-manifold with $b^+(M) \geq 2$ and $d(\fraks_0) = 0$, and $k, l$ be natural numbers with $l \geq k > 0$.
We set $N = k\CP^2 \# l(-\CP^2)$ and $(X, \fraks) = (M \# N, \fraks_0\#\frakt)$, where $\frakt$ is the $\spc$ structure defined by \eqref{eq: first Chern class defining property} on $N$.
Since we have $b^{+}(X) \geq k+2$ and $d(\fraks) = -k$, the invariant $\SWcoh(X, \fraks)$ is defined and belongs to $H^{k-1}(\calK(X, \fraks);\Z)$.
Recall that, for a wall-crossing collection of surfaces $\bSi = \{\Si_i^{\epsilon}\}_{1 \leq i \leq k, \epsilon \in \{+,-\}}$ equipped with an orientation in the sense of \cref{defi: orientation of bSi}  in $(N, \frakt)$, we can consider the element $[K(\bSi)] \in H_{k-1}(\calK(X, \fraks);\Z)$ as in \cref{rem: regarding a subcomplex on connected sum}.
We now have the following non-vanishing theorem:

\begin{theo}
\label{theo: non-vanishing theorem}
Let $\bSi = \{\Si_i^{\epsilon}\}_{1 \leq i \leq k, \epsilon \in \{+,-\}}$ be a wall-crossing collection of surfaces in $(N, \frakt)$.
Assume that $\Si_i^+$ and $\Si_i^-$ intersect transversally for each $i$.
Let us equip $\bSi$ with an orientation in the sense of \cref{defi: orientation of bSi}.
Then
\[
\left< \SWcoh(X, \fraks), [K(\bSi)] \right> = \pm \SWinv(M, \fraks_{0})
\]
holds.
In particular, if $\SWinv(M, \fraks_{0}) \neq 0$, then we obtain 
\[
\SWcoh(X, \fraks) \neq 0 \ {\it in}\  H^{k-1}(\calK(X, \fraks);\Z)
\]
and
\[
[K(\bSi)] \neq 0 \ {\it in}\ H_{k-1}(\calK(X, \fraks);\Z).
\]
\end{theo}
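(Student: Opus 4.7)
The plan is to compute $\langle \SWcoh(X, \fraks), [K(\bSi)]\rangle$ directly via a specific additional data $\calA$ adapted to the connected sum structure $X = M \# N$. By the definitions of $\SWcoh$ and of $[K(\bSi)]$, the pairing equals $\sum_{\bep \in \Signk} \varepsilon(\bep)\, \#\calM(\vp_{\si_\bep}, \fraks)$, where $\varepsilon(\bep) \in \{\pm 1\}$ records the chosen orientation of $\si_\bep$. Since each surface $\Si_i^\epsilon$ lies inside $N$, I would first choose the connected-sum neck to be disjoint from every $\Si_i^\epsilon$ and then choose the metric $g$ on $X$, the tubular neighborhoods $N(\cdot)$, and the function $a(\cdot)$ so that the entire stretching construction of \cref{subsection: Construction of a family of Riemannian metrics} is supported in $N$.

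Next I would build the perturbations $\vp_{\bullet}$ on the simplices of $K(\bSi)$ inductively as in \cref{subsection: Definition of the invariant}, with the additional constraint that for every $\bep \in \Signk$ the family decomposes as
\[
\vp_{\si_\bep}(t) = (g^M, \mu^M) \# \vp^N_{\si_\bep}(t),
\]
for a single generic $(g^M, \mu^M) \in \Pi(M, \fraks_0)$ and a family $\vp^N_{\si_\bep}(t) \in \Pi(N, \frakt)$ glued along a neck of uniformly large length. The inductive compatibility of \cref{cond: restriction of perturbations} on boundaries of cubes is thereby transferred to the $N$-factor, where the construction of \cref{section: Family of Riemannian metrics} furnishes $\vp^N_{\si_\bep}$ whose boundary maps into $\circPi(N, \frakt)$ by \eqref{eq: subset relation for partial image} and \eqref{eq: subset relation for partial partial image}. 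Extending the data beyond $K(\bSi)$ on $\calK(X, \fraks)$ may be done arbitrarily and does not affect the value of the cochain on $K(\bSi)$.

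With this choice, each cube $[0, R(\si_\bep)]^{\si_\bep}$ is a $k$-disk whose boundary lies in $\circPi(N, \frakt)$, so \cref{prop : wall-crossing+gluing} applies (once the neck length is chosen sufficiently large, uniformly in $\bep$) and yields
\[
\#\calM(\vp_{\si_\bep}, \fraks) = \pm\,(\vp^N_{\si_\bep} \cdot \calW(N, \frakt))\cdot \SWinv(M, \fraks_0).
\]
By \cref{defi: orientation of bSi}, the orientations $\varepsilon(\bep)$ are chosen precisely so that the oriented cubes $[0, R]^{\si_\bep}$ assemble coherently into an oriented $D^k_{\bSi, R}$, so the identified boundary pieces cancel in pairs and one obtains
\[
\sum_{\bep \in \Signk} \varepsilon(\bep)\,(\vp^N_{\si_\bep} \cdot \calW(N, \frakt)) = \vp^N_{\bSi, R} \cdot \calW(N, \frakt),
\]
which equals $\pm 1$ by \cref{prop: description of higher-dim. wall-crossing}. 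Combining these identities produces $\langle \SWcoh(X, \fraks), [K(\bSi)]\rangle = \pm \SWinv(M, \fraks_0)$, from which both nonvanishing conclusions follow.

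The main obstacle I expect is the bookkeeping for the second step: one must carry out the inductive construction of $\vp_{\bullet}$ within the connected-sum subspace $\{(g^M, \mu^M)\} \# \Pi(N, \frakt) \subset \Pi(X, \fraks)$ while maintaining the compatibility conditions of \cref{cond: restriction of perturbations} on every common face of the cubes $[0, R(\si_\bep)]^{\si_\bep}$, and simultaneously selecting a neck length large enough for \cref{prop : wall-crossing+gluing} to hold uniformly across the finite collection $\{\si_\bep\}_{\bep \in \Signk}$. The required genericity is then realized on the $N$-factor, exactly in the framework of \cref{subsection: description of higher-dimensional wall-crossing phenomena in terms of embedded surfaces}.
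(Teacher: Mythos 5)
Your proposal follows essentially the same strategy as the paper: put the surfaces in $N$, build perturbations compatible with the connected-sum decomposition, feed the disks into Ruberman's gluing formula (\cref{prop : wall-crossing+gluing}), and close with the wall-crossing computation (\cref{prop: description of higher-dim. wall-crossing}). Your reorganization of the last step (apply gluing cube-by-cube, then sum the intersection numbers) is in principle equivalent to the paper's (assemble the cubes into $D^{k}_{\bSi, R}$ first, then apply gluing once to the assembled disk), though the latter sidesteps some sign bookkeeping that your version would need to carry through. However, there are two technical points you gloss over that the paper must and does handle explicitly.

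First, you invoke \cref{prop : wall-crossing+gluing} ``once the neck length is chosen sufficiently large, uniformly in $\bep$,'' but the cube side lengths $R(\si_{\bep})$ depend on the metric $g$ on $X$, which depends on the neck length. Without more, this is circular: elongating the neck to apply Ruberman's proposition could a priori change the $R$-values and hence the parameter cubes one is trying to stretch over. The paper resolves this by showing that $R^{X}(\tau) = R(\tau, g, N(\cdot), a(\cdot))$ is independent of the neck length, using that the neck carries the standard cylindrical metric, which has positive scalar curvature, so $\kappa(h)$ vanishes there and $\|\kappa(h)\|_{L^2(X,h)}$ (hence $C(\tau)$, $\bar R(\tau)$, $R(\tau)$) is unaffected by stretching the neck. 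This observation is what decouples the two length choices.

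Second, the cubes $[0, R(\si_{\bep})]^{\si_{\bep}}$ produced by the definition of $\SWcoh$ in general have different side lengths for different $\bep$, so they do not directly glue into the disk $D^{k}_{\bSi, R}$. One must first normalize: set $R_{\bSi} := \max_{\bep}\{R^{X}(\si_{\bep}), R^{N}(\si_{\bep})\}$, extend each $\vp_{\si_{\bep}}$ to $[0, R_{\bSi}]^{\si_{\bep}}$ over the collar $\bigsqcup_{R^{X}(\si_{\bep}) \leq R \leq R_{\bSi}} \extdel(\si_{\bep}, R)$ via $\phi_{\si_{\bep}} \circ \psi_{\si_{\bep}, R}^{-1}$, and verify via \cref{prop: vanishing on certain face} that the extension does not change $\#\calM$. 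Only then do the extended cubes fit together into $D^{k}_{\bSi, R_{\bSi}}$, the counts add (since there are no solutions on the identified boundary pieces), and \cref{prop: description of higher-dim. wall-crossing} applies. You flag ``bookkeeping'' at the end of your write-up, but this specific normalization is where the argument would actually fail if omitted rather than merely being tedious.
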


\begin{rem}
There could be a generalization of \cref{theo: non-vanishing theorem}
which includes the case that $b^{+}(M)=1$.
However, the proof of \cref{theo: non-vanishing theorem} depends on the gluing argument in \cite{BK}, and the case that $b^{+}=1$ is not
discussed in \cite{BK}.
Therefore we do not deal with such a generalization in this paper.
%
\end{rem}

\begin{proof}[Proof of \cref{theo: non-vanishing theorem}]
To prove the theorem, we combine the setting of \cref{subsection: description of higher-dimensional wall-crossing phenomena in terms of embedded surfaces} with that of \cref{subsection: Ruberman's combination of wall-crossing and gluing arguments} as follows.
Let $B_{M}^{4} \subset M$ and $B_{N}^{4} \subset N$ be small balls such that $B_{N}^{4}$ is contained in the complement of some neighborhood of the surfaces belonging to $\bSi$, namely $\Si^{\epsilon}_{i}$'s for $i \in \{1, \ldots, k\}$ and $\epsilon \in \{+,-\}$.
Take a metric $g^{N} \in \Met(N)$ which is cylindrical near the boundary of $B^{4}_{N}$.
We also fix functions $N(\cdot)$ and $a(\cdot)$ for the surfaces belonging to $\bSi$.
Then we can define the map $\phi^{N}_{\tau}$ for each $\tau \prec \si_{\bep}$ and $\bep \in \Signk$ using $g^{N}$, $N(\cdot)$, and $a(\cdot)$ for the surfaces belonging to $\tau$.

Recall that surfaces belonging to $\bSi$ can be regarded as vertices of $\calK(X, \fraks)$.
Let us consider the metric $g = g^{M} \# g^{N} \in \Met(X)$ defined by connected sum.
Then we obtain the map $\phi_{\tau}$ defined as in \eqref{eq: map for total stretching} for each $\tau \prec \si_{\bep}$ and $\bep$ using $g$, $N(\cdot)$, and $a(\cdot)$ for the surfaces belonging to $\si_{\bep}$.
We now see that $R^{X}(\tau) := R(\tau, g, N(\cdot), a(\cdot))$ is independent of the choice of the length of the cylinder used to define the connected sum.
First note that $\lambda(\tau) = \lambda(\tau, a(\cdot))$ is only determined by $a(\cdot)$ for faces of $\tau$.
To see that $C(\tau) = C(\tau, g, N(\cdot), a(\cdot))$ is independent of the choice of the length, take any metric  $h \in \phi_{\tau}(|\tau|)$.
Since $h$ can be written as the connected sum of $g^{M}$ and a metric changed from $g^{N}$ only near the surfaces belonging to $\tau$,
the restriction of $h$ to the cylinder in the connected sum coincides with the standard cylindrical metric, and hence is a metric with positive scalar curvature.
From this we deduce that the restriction of $\kappa(h)$ to the cylinder is identically zero.
Therefore $\|\kappa(h)\|_{L^{2}(X, h)}$ is invariant under any change of the length of the cylinder.
We thus have that $R^{X}(\tau)$ is independent of the choice of the length of the cylinder, and we write it without length parameter of the cylinder.

Let us consider the collection of positive numbers $\{R^{X}(\tau)\}_{\tau \prec \si_{\bep}, \bep \in \Signk}$.
Then the maps 
\[
\{\vp_{\tau,R^{X}(\tau)}^{N} : [0, R^{X}(\tau)]^{\tau} \to \Pi(N, \fraks)\}_{\tau \prec \si_{\bep}, \bep \in \Signk}
\]
are obtained as in \cref{subsection: description of higher-dimensional wall-crossing phenomena in terms of embedded surfaces}.
We extend $N(\cdot)$ and $a(\cdot)$ to any vertices of $\calK(X, \fraks)$.
Let $(g^{M}, \mu) \in \Pi(M, \fraks_0)$ be a generic point.
If we need, we may arrange that the metric $g^{M}$ and the metric-component of each point in the image of $\vp^{N}_{\tau, R^{X}(\tau)}$ for each $\tau \prec \si_{\bep}$ and $\bep \in \Signk$ are cylindrical near the boundaries of $B_{M}^{4}$ and of $B_{N}^{4}$ respectively.
Then the map $\phi_{\tau}$ for each $\tau \prec \si_{\bep}$ and each $\bep$ can be written as
\begin{align}
\phi_{\tau} (x) = g^{M}\#\phi^{N}_{\tau}(x)
\label{eq: connected sum expression of phi}
\end{align}
for each $x \in \dom(\phi_{\tau}) = \dom(\phi^{N}_{\tau})$.
We may also assume that $\mu$ and the self-dual $2$-form-component of each point in the image of $\vp^{N}_{\tau,R^{X}(\tau)}$ for each $\tau$ are supported on the complement of $B_{M}^{4}$ and of $B_{N}^{4}$ respectively; as in \cref{subsection: Ruberman's combination of wall-crossing and gluing arguments},
we may define 
\[
\vp_{\tau} : [0, R^{X}(\tau)]^{\tau} \to \Pi(X, \fraks)
\]
by
\begin{align}
\vp_{\tau} (x) := (g^{M}, \mu)\#\vp^{N}_{\tau, R^{X}(\tau)}(x)
\label{eq: def using connected sum}
\end{align}
using long-stretched connected sum.
We also take $\vp_{\bullet}$ for any vertices of $\calK(X, \fraks)$ as an extension of $\vp_{\bullet}$ for $\tau$'s.
Then we can use the tuple $\calA = (g, N(\cdot), a(\cdot),  \vp_{\bullet})$ as an additional data to construct a cochain on $\calK(X, \fraks)$ through the procedure in \cref{subsection: Definition of the invariant}.

For each $\bep \in \Signk$,
we set $R^{N}(\si_{\bep}) := R(\si_{\bep}, g^{N}, N(\cdot), a(\cdot))$ and 
\[
R_{\bSi} := \max_{\bep \in \Signk}\{R^{X}(\si_{\bep}), R^{N}(\si_{\bep})\}.
\]
We here define $\vp_{\si_{\bep}, R_{\bSi}} : [0, R_{\bSi}]^{\si_{\bep}} \to \circPi(X, \fraks)$ as follows.
Let us consider the decomposition
\[
[0, R_{\bSi}]^{\si_{\bep}}
= [0, R^{X}(\si_{\bep})]^{\si_{\bep}} \cup \bigsqcup_{R^{X}(\si_{\bep}) \leq R \leq R_{\bSi}} \extdel(\si_{\bep}, R).
\]
The map $\vp_{\si_{\bep}} : [0, R^{X}(\si_{\bep})]^{\si_{\bep}} \to \circPi(X, \fraks)$ coincides with the composition
\begin{align}
\phi_{\si_{\bep}} \circ \left( \bigsqcup_{R^{X}(\si_{\bep}) \leq R \leq R_{\bSi}} \psi_{\si_{\bep}, R}^{-1} \right) : \bigsqcup_{R^{X}(\si_{\bep}) \leq R \leq R_{\bSi}} \extdel(\si_{\bep}, R) \to \circPi(X, \fraks)
\label{eq: parts consists disjoint union}
\end{align}
on $\extdel(\si_{\bep}, R^{X}(\si_{\bep}))$.
By gluing $\vp_{\si_{\bep}}$ and \eqref{eq: parts consists disjoint union} together, we obtain a map from $[0, R_{\bSi}]^{\si_{\bep}}$ to $\circPi(X, \fraks)$.
We define $\vp_{\si_{\bep}, R_{\bSi}}$ as this glued map.
Note that, by \cref{prop: vanishing on certain face},
there is no solution to the unperturbed \SW equations for any metric in the image of $\extdel(\si_{\bep}, R)$ by $\phi_{\si_{\bep}} \circ \psi_{\si_{\bep}, R}^{-1}$ if $R \geq R^{X}(\si_{\bep})$.
We therefore have
\begin{align}
\#\calM(\vp_{\si_{\bep}}, \fraks) = \#\calM(\vp_{\si_{\bep}, R_{\bSi}}, \fraks).
\label{eq: relation between two counting with different stretching}
\end{align}
We also note that, by the same argument to prove \cref{lem: vanishing on boundary}, there is no solution to the \SW equations for any element in $\vp_{\si_{\bep}, R_{\bSi}}(\del\left([0, R_{\bSi}]^{\si_{\bep}}\right))$.

Let us define $\vp^{N}_{\si_{\bep}, R_{\bSi}} : [0, R_{\bSi}]^{\si_{\bep}} \to \Pi(N, \frakt)$ using the same way to define $\vp_{\si_{\bep}, R_{\bSi}}$ above.
By the definition of $\vp_{\si_{\bep}}$ and \eqref{eq: connected sum expression of phi}, we have 
\begin{align}
\vp_{\si_{\bep}, R_{\bSi}}(x) = (g^{M}, \mu) \# \vp^{N}_{\si_{\bep}, R_{\bSi}}(x)
\label{eq: connected sum for vp}
\end{align}
for any $x \in [0, R_{\bSi}]^{\si_{\bep}}$.
By applying the construction of $\vp^{N}_{\bSi, R}$ in \cref{subsection: description of higher-dimensional wall-crossing phenomena in terms of embedded surfaces} to $R=R_{\bSi}$, we have the map $\vp^{N}_{\bSi, R_{\bSi}} : D^{k}_{R_{\bSi}} \to \Pi(N, \frakt)$ from $\vp^{N}_{\si_{\bep}, R_{\bSi}}$'s.
Note that $\vp^{N}_{\bSi, R_{\bSi}}$ satisfies that $\vp^{N}_{\bSi, R_{\bSi}}(S^{k}_{R_{\bSi}}) \subset \circPi(N, \frakt)$.
In a similar vein, $\vp_{\si_{\bep}, R_{\bSi}}$'s induce the map $\vp_{\bSi, R_{\bSi}} : D^{k}_{R_{\bSi}} \to \circPi(X, \fraks)$.
This map $\vp_{\bSi, R_{\bSi}}$ is also written as the connected sum of $(g^{M}, \mu)$ and $\vp^{N}_{\bSi, R_{\bSi}}$ by \eqref{eq: connected sum for vp}.
Let $\pi : \bigsqcup_{\bep \in \Signk} [0, R_{\bSi}]^{\si_{\bep}} \to D^{k}_{\bSi, R_{\bSi}}$ be the quotient map.
If we need, we replace $\vp_{\bSi, R_{\bSi}}$ with an approximation of it to be smooth on the complement of $\pi(\bigsqcup_{\bep \in \Signk} \del([0, R_{\bSi}]^{\si_{\bep}}))$.
From the first note in \cref{rem: foundation of disjoint counting} and the argument above, it follows that there is no solution on the space defined by substituting $R_{\bSi}$ for $R$ in \eqref{eq: identified area}.
We therefore have
\begin{align}
\# \calM(\vp_{\bSi, R_{\bSi}}, \fraks) = \sum_{\bep \in \Signk} \# \calM(\vp_{\si_{\bep}, R_{\bSi}}, \fraks).
\label{eq: counting equality right sum}
\end{align}
From the definition of $\SWcoh(X, \fraks)$, \eqref{eq: relation between two counting with different stretching}, and \eqref{eq: counting equality right sum}, we obtain
\begin{align*}
\left< \SWcoh(X, \fraks), [K(\bSi)] \right>
& = \SWcoch(X, \fraks, \calA)([K(\bSi)] )\\
& = \sum_{\bep \in \Signk} \# \calM(\vp_{\si_{\bep}}, \fraks)\\
& = \sum_{\bep \in \Signk} \# \calM(\vp_{\si_{\bep}, R_{\bSi}}, \fraks)\\
& = \# \calM(\vp_{\bSi, R_{\bSi}}, \fraks)\\
& = \pm(\vp^{N}_{\bSi, R_{\bSi}} \cdot \calW(N, \frakt)) \cdot \SWinv(M, \fraks_0)\\
& = \pm \SWinv(M, \fraks_0),
\end{align*}
where the fifth and the last equalities are deduced from \cref{prop : wall-crossing+gluing} and \cref{prop: description of higher-dim. wall-crossing} respectively.
(The sign might change in the last equality.)
This proves the \lcnamecref{theo: non-vanishing theorem}.
\end{proof}

\begin{rem}
\label{rem: non-vanishing compare with usual SW invariant}
Note that the usual \SW invariant on $X = M \# N$ in \cref{theo: non-vanishing theorem} vanishes for any $\spc$ structure on it since we have assumed that $b^{+}(M) \geq 2$ and $b^{+}(N) \geq 1$.
On the other hand, the invariant $\SWcoh(X, \fraks)$ does not vanish if $l \geq 4k$ holds and if we take $(M, \fraks_{0})$ so that $\SWinv(M, \fraks_{0}) \neq 0$.
Indeed, we may give a wall-crossing collection of surfaces in $(N, \frakt)$ with $l \geq 4k$. 
(See \cref{ex: wall-crossing collection of surfaces}.)
\end{rem}

To be concrete, we give the following non-vanishing of $\SWcoh(X, \fraks)$:

\begin{cor}
\label{cor: non-vanishing on psc manifold}
Let $k$ and $l$ be positive integers with $l \geq 4k$.
Then there is a $spin^{c}$ structure $\fraks$ on
\[
X = (k+3)\CP^{2} \# (l+19)(-\CP^{2})
\]
such that
\[
\SWcoh(X, \fraks) \neq 0 \ {\it in}\  H^{k-1}(\calK(X, \fraks);\Z).
\]
\end{cor}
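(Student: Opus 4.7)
The plan is to apply the non-vanishing theorem \cref{theo: non-vanishing theorem} with $(M,\fraks_{0})$ taken to be the $K3$ surface equipped with its spin spin$^{c}$ structure (so $c_{1}(\fraks_{0})=0$). Then $b^{+}(M)=3\geq 2$, the formal dimension is $d(\fraks_{0})=-(2\Euler(M)+3\sign(M))/4=0$, and $|\SWinv(M,\fraks_{0})|=1$. Set $N:=k\CP^{2}\# l(-\CP^{2})$ with the spin$^{c}$ structure $\frakt$ defined by \eqref{eq: first Chern class defining property}, and form $(X,\fraks):=(M\# N,\fraks_{0}\#\frakt)$. Then $b^{+}(X)=k+3$ and a direct computation gives $d(\fraks)=-k$, so setting $n:=k-1$ the class $\SWcoh(X,\fraks)\in H^{n}(\calK(X,\fraks);\Z)=H^{k-1}(\calK(X,\fraks);\Z)$ is defined by \cref{defi: main definition}.

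The first substantive step is to identify $X$ with the manifold named in the statement. Here I invoke the classical decomposition theorem of Mandelbaum--Moishezon, which gives $K3\#\CP^{2}\cong 4\CP^{2}\#19(-\CP^{2})$ as smooth manifolds. Using $k\geq 1$, one $\CP^{2}$-summand of $N$ can be absorbed into the $K3$ factor:
\[
X=K3\# k\CP^{2}\# l(-\CP^{2})\cong(K3\#\CP^{2})\#(k-1)\CP^{2}\# l(-\CP^{2})\cong(k+3)\CP^{2}\#(l+19)(-\CP^{2}),
\]
which is the target manifold. One should arrange the connect-sum ball in $N$ to lie in the complement of the surfaces used below; this is a free choice because those surfaces occupy a compact region of $N$.

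For the second step, the hypothesis $l\geq 4k$ is precisely what \cref{ex: wall-crossing collection of surfaces} requires to produce a wall-crossing collection of surfaces $\bSi=\{\Si_{i}^{\epsilon}\}_{1\leq i\leq k,\epsilon\in\{+,-\}}$ in $(N,\frakt)$ (the four extra $-\CP^{2}$-summands per index $i$ being needed to realize one of the $\Si_{i}^{\pm}$ by a class whose pairing $(c_{1}(\frakt)\cdot[\Si_{i}^{\epsilon}])(H_{i}\cdot[\Si_{i}^{\epsilon}])$ has the opposite sign to that of the other, while keeping self-intersection zero and genus small enough to violate the adjunction inequality). Equipping $\bSi$ with an orientation in the sense of \cref{defi: orientation of bSi} and applying \cref{theo: non-vanishing theorem} then yields
\[
\langle\SWcoh(X,\fraks),[K(\bSi)]\rangle=\pm\SWinv(M,\fraks_{0})=\pm 1\neq 0,
\]
forcing $\SWcoh(X,\fraks)\neq 0$ in $H^{k-1}(\calK(X,\fraks);\Z)$.

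The main technical obstacle I anticipate is bookkeeping around the Mandelbaum--Moishezon identification --- checking that the global diffeomorphism can be arranged compatibly with the connect-sum region and does not interfere with the compact collection $\bSi\subset N$ --- rather than anything conceptual, since the analytic heavy lifting has already been carried out in \cref{theo: non-vanishing theorem} via Ruberman's wall-crossing-plus-gluing argument and the higher-dimensional wall-crossing description of \cref{prop: description of higher-dim. wall-crossing}.
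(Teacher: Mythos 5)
Your proof is correct and follows essentially the same route as the paper's: take $(M,\fraks_{0})=(K3,\fraks_{\mathrm{can}})$ with $\SWinv=\pm 1$, form $(X,\fraks)=(M\#N,\fraks_{0}\#\frakt)$, use the wall-crossing collection of surfaces from the example applicable for $l\geq 4k$, and apply the non-vanishing theorem. The only difference is that you make explicit the diffeomorphism identification via Mandelbaum--Moishezon ($K3\#\CP^{2}\cong 4\CP^{2}\#19(-\CP^{2})$), which the paper leaves implicit in the single sentence ``$M\#N$ is diffeomorphic to $X$''; this is a useful elaboration but not a different argument.
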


\begin{proof}
In \cref{theo: non-vanishing theorem} let us take $(K3, \fraks_{\rm can})$ as $(M, \fraks_{0})$, where $K3$ is a K3 surface and $\fraks_{\rm can}$ is the canonical $\spc$ structure of $K3$.
Then $\SWinv(M, \fraks_{0}) \neq 0$ holds and $M \# N$ is diffeomorphic to $X$ in the statement of this \lcnamecref{cor: non-vanishing on psc manifold} \cite{MR537731,MR0491730}.
Let $\fraks$ be the $\spc$ structure on $X$ corresponding to $\fraks_{0} \# \frakt$ via $X \cong M \# N$.
A wall-crossing collection of surfaces in $(N, \frakt)$ will be given in \cref{ex: wall-crossing collection of surfaces}, and therefore we have $\SWcoh(X, \fraks) \neq 0$.
\end{proof}

\begin{rem}
The $4$-manifold $X$ in \cref{cor: non-vanishing on psc manifold} admits a metric with positive scalar curvature, though $\SWcoh(X, \fraks)$ does not vanish.
Note that, in contrast,  both of the \SW invariant and its refinement called the Bauer--Furuta invariant~\cite{MR2025298} vanish on any $4$-manifold admitting a positive scalar curvature metric.
\end{rem}

\begin{rem}
More generally, if we take sufficiently large numbers as $k$ and $l$, and a simply connected $4$-manifold as $M$ in \cref{theo: non-vanishing theorem},
by Wall's result~\cite{MR0163324}, $X$ in \cref{theo: non-vanishing theorem} is diffeomorphic to $k'\CP^{2} \# l'(-\CP^{2})$ for some $k'$ and $l'$.
\end{rem}

We here interpret \cref{theo: non-vanishing theorem} from a classical point of view:
\Cref{theo: non-vanishing theorem} yields certain adjunction-type genus constraints on configurations of embedded surfaces.
The most general formulation of the constraints can be described using the following concept, which we call {\it bounding collections of surfaces}.
Henceforth we follow the notation $(M, \fraks_{0})$, $(N, \frakt)$, and $(X, \fraks)$ given at the beginning of this subsection.
Recall that if we give an orientation of $\bSi$ in the sense of \cref{defi: orientation of bSi}, then we can consider the fundamental chain/class of $K(\bSi)$.
We also simply write the fundamental chain $K(\bSi)$.

\begin{defi}
\label{defi: bounding collection of surfaces}
Let $\bSi = \{\Si_i^{\epsilon}\}_{1 \leq i \leq k, \epsilon \in \{+,-\}}$ be a wall-crossing collection of surfaces in $(N, \frakt)$.
We call a finite subset $\bS \subset V(\bar{\calK}(X))$ a {\it bounding collection of surfaces in $X$ for $\bSi$} if there exist a finite set $\Lambda$, an integer $c_{\lambda}$, and a $k$-simplex $\si_{\lambda}$ of $\bar{\calK}(X)$ for each $\lambda \in \Lambda$ such that
\begin{enumerate}
\item $\bSi \subset \bigcup_{\lambda \in \Lambda} \si_{\lambda}$,
\item $\bS = \bigcup_{\lambda \in \Lambda} \si_{\lambda} \setminus \bSi$, and
\item $\del(\sum_{\lambda \in \Lambda} c_{\lambda} \lb \si_{\lambda} \rb) = K(\bSi)$ holds in $C_{k-1}(\bar{\calK}(X))$ for some orientations of $\si_{\lambda}$'s and some orientation of $\bSi$ in the sense of \cref{defi: orientation of bSi}.
\label{item: bounding condition for the fundamental cycle}
\end{enumerate}
Here in \eqref{item: bounding condition for the fundamental cycle} above $K(\bSi)$ denotes the fundamental chain and is regarded as an element in $C_{k-1}(\bar{\calK}(X))$ via the natural inclusion $ C_{\ast}(\calK(X, \fraks)) \inc C_{\ast}(\bar{\calK}(X))$.
\end{defi}

\begin{ex}
\label{ex: a bounding collection of surfaces consisting of a single surface}
For any wall-crossing collection of surfaces $\bSi = \{\Si_i^{\epsilon}\}_{1 \leq i \leq k, \epsilon \in \{+,-\}}$ in $(N, \frakt)$, there exists a bounding collection of surfaces in $X$.
(In particular, if we give a suitable orientation of $\bSi$,
\[
[K(\bSi)] = 0\ {\rm in}\ H_{k-1}(\bar{\calK}(X);\Z)
\]
holds.
Compare this equality in $H_{k-1}(\bar{\calK}(X);\Z)$ with the non-vanishing result in $H_{k-1}(\calK(X, \fraks);\Z)$ in \cref{theo: non-vanishing theorem}.)
Indeed, let us take $S \in V(\bar{\calK}(X))$ with
\begin{align}
S \cap \Si_i^{\epsilon} = \emptyset \quad (\forall \epsilon \in \{+,-\},\ \forall i \in \{1, \ldots, k\}).
\label{eq: surface which disjoint from all menber}
\end{align}
Then $\bS = \{S\}$ is a bounding collection of surfaces.
We will simply write the condition \eqref{eq: surface which disjoint from all menber}
\[
S \cap \bSi = \emptyset.
\]

To give $S$ with $S \cap \bSi = \emptyset$ corresponds to considering a ``cone'' of $K(\bSi)$.
Since $|K(\bSi)| \cong S^{k-1}$, the cone of $K(\bSi)$ is a $k$-dimensional disk and therefore $K(\bSi)$ is homologically trivial.
For example, in the case that $k=2$, the bounding by $S$ of $\bSi$ is described in the left picture in \cref{figure : Some bounding for KbSi}.

\begin{figure}
\begin{center}
\scalebox{1}{
\begin{tikzpicture}
[xscale = 0.8, yscale = 0.8]
\draw [thick] (4,2) -- (6,2);
\draw [thick] (4,0) -- (6,0);
\draw [thick] (4,2) -- (4,0);
\draw [thick] (6,2) -- (6,0);
\draw [thick] (4,2) -- (6,0);
\draw [thick] (4,0) -- (6,2);
\fill(4,2) circle (2.2 pt);
\fill(6,2) circle (2.2 pt);
\fill(4,0) circle (2.2 pt);
\fill(6,0) circle (2.2 pt);
\fill(5,1) circle (2.2 pt);
\draw(3.7,2.4) node {$\Si_1^{+}$};
\draw(6.3,2.4) node {$\Si_2^{+}$};
\draw(6.3,1-1.4) node {$\Si_1^{-}$};
\draw(3.7,1-1.4) node {$\Si_2^{-}$};
\draw(5,1.4) node {$S$};
\fill [gray, opacity=.3] (4,2) -- (6, 2) -- (6, 0) -- (4,0);
\draw [thick] (10,0) -- (10,3);
\draw [thick] (10,0) -- (13,0);
\draw [thick] (13,0) -- (13,3);
\draw [thick] (13,3) -- (10,3);
\draw [thick] (10,3) -- (13,0);
\draw [thick] (10,0) -- (11,2);
\draw [thick] (10,0) -- (12,1);
\draw [thick] (13,3) -- (11,2);
\draw [thick] (13,3) -- (12,1);
\fill(10,0) circle (2.2 pt);
\fill(13,0) circle (2.2 pt);
\fill(10,3) circle (2.2 pt);
\fill(13,3) circle (2.2 pt);
\fill(11,2) circle (2.2 pt);
\fill(12,1) circle (2.2 pt);
\draw(10-0.3,3.4) node {$\Si_1^{+}$};
\draw(13.3,3.4) node {$\Si_2^{+}$};
\draw(13.3,-0.4) node {$\Si_1^{-}$};
\draw(10-0.3,-0.4) node {$\Si_2^{-}$};
\draw(11+0.1,2.4) node {$S$};
\draw(12-0.1,0.6) node {$S'$};
\fill [gray, opacity=.3] (10,0) -- (13, 0) -- (13, 3) -- (10,3);
\end{tikzpicture}
}
\end{center}
\caption{Some bounds for $K(\bSi)$}
\label{figure : Some bounding for KbSi}
\end{figure}
\end{ex}

\begin{ex}
\label{ex: infinitely many bounding}
In fact, there are infinitely many distinct bounding collections of surfaces in $X$ for any wall-crossing collection of surfaces $\bSi = \{\Si_i^{\epsilon}\}_{1 \leq i \leq k, \epsilon \in \{+,-\}}$ in $(N, \frakt)$.
Indeed, there are infinitely many triangulations of a disk, which is regarded as the cone of $K(\bSi)$.
If we fix such a triangulation, roughly speaking, by assigning elements of $V(\bar{\calK}(X))$ to vertices of the triangulation, we have a bounding collection of surfaces $\bS$ for $\bSi$.
For example in the case that $k=2$, let us consider the triangulation of the $2$-dimensional disk described in the right picture in \cref{figure : Some bounding for KbSi}.
The corresponding bounding collection of surfaces is as follows:
let us consider $S, S' \in V(\bar{\calK}(X))$ such that 
\begin{align}
S \cap \Si_2^{\pm} = S \cap \Si_1^{+} = \emptyset,\ 
S' \cap \Si_2^{\pm} = S' \cap \Si_1^{-} = \emptyset,\ {\rm and}\ 
S \cap S' = \emptyset.
\label{eq: bounding condition using two surfaces}
\end{align}
Then $\bS = \{S, S'\}$ is a bounding collection of surfaces for $\bSi$.
\end{ex}

We now have the following adjunction-type genus constraints for any bounding collection of surfaces:

\begin{cor}
\label{cor: the adjunction inequality for infinitely many configurations}
Suppose that $\SWinv(M, \fraks_{0}) \neq 0$.
Let $\bSi = \{\Si_i^{\epsilon}\}_{1 \leq i \leq k, \epsilon \in \{+,-\}}$ be a wall-crossing collection of surfaces in $(N, \frakt)$.
Assume that $\Si_i^+$ and $\Si_i^-$ intersect transversally for each $i$.
Then, for any bounding collection of surfaces $\bS$ in $X$ for $\bSi$, there exists a surface $S \in \bS$ such that the inequality 
\begin{align}
 \Euler^-(S) \geq |c_1(\fraks) \cdot [S]|
 \label{eq: the adjunction inequality as an application}
 \end{align}
 holds.
 
 In particular, for $S \in V(\bar{\calK}(X))$ with $S \cap \bSi = \emptyset$, the inequality \eqref{eq: the adjunction inequality as an application} holds.
\end{cor}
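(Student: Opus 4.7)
The plan is to argue by contradiction via the non-vanishing of $\langle \SWcoh(X, \fraks), [K(\bSi)] \rangle$ provided by \cref{theo: non-vanishing theorem}. Suppose, for contradiction, that every $S \in \bS$ satisfies the reverse strict inequality
\[
\Euler^-(S) < |c_1(\fraks) \cdot [S]|.
\]
Then each element of $\bS$ lies in $V(\calK(X, \fraks))$ by the definition of the adjunction complex of surfaces. Since the vertices of $\bSi$ already lie in $V(\calK(X, \fraks))$ (this is part of what it means to be a wall-crossing collection, and is also guaranteed because $\bSi$ spans simplices of $\calK(X, \fraks)$ by assumption), every $k$-simplex $\si_\lambda$ appearing in the bounding has all its vertices in $V(\calK(X, \fraks))$.

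The next step is to observe that $\calK(X, \fraks)$ is a \emph{full} subcomplex of $\bar{\calK}(X)$. Hence each $\si_\lambda$, being a simplex of $\bar{\calK}(X)$ whose vertices all lie in $V(\calK(X, \fraks))$, is actually a simplex of $\calK(X, \fraks)$. Therefore the bounding chain
\[
c := \sum_{\lambda \in \Lambda} c_{\lambda} \lb \si_{\lambda} \rb
\]
lies in $C_k(\calK(X, \fraks))$, and the identity $\del c = K(\bSi)$ now takes place in $C_{k-1}(\calK(X, \fraks))$. Consequently $[K(\bSi)] = 0$ in $H_{k-1}(\calK(X, \fraks);\Z)$, whence
\[
\langle \SWcoh(X, \fraks), [K(\bSi)] \rangle = 0.
\]
On the other hand, \cref{theo: non-vanishing theorem} gives $\langle \SWcoh(X, \fraks), [K(\bSi)] \rangle = \pm \SWinv(M, \fraks_{0}) \neq 0$, a contradiction. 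Thus at least one $S \in \bS$ must satisfy \eqref{eq: the adjunction inequality as an application}.

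The ``in particular'' clause is immediate from the first example following \cref{defi: bounding collection of surfaces}: whenever $S \in V(\bar{\calK}(X))$ satisfies $S \cap \bSi = \emptyset$, the singleton $\bS = \{S\}$ is a bounding collection of surfaces for $\bSi$ (one takes $\Lambda$ indexed by the $(k-1)$-simplices of $K(\bSi)$ with each $\si_\lambda$ obtained by coning on $S$, and $c_\lambda$ chosen so that $\del$ of the resulting chain is $K(\bSi)$). Applying the main statement to this $\bS$ forces $S$ itself to satisfy \eqref{eq: the adjunction inequality as an application}. The only subtle point in the whole argument is the fullness of $\calK(X, \fraks)$ inside $\bar{\calK}(X)$, which is precisely how the adjunction complex was defined, so no serious obstacle arises.
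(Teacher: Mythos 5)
Your proof is correct and takes essentially the same approach as the paper: assume all elements of $\bS$ violate the adjunction inequality, use fullness of $\calK(X,\fraks)$ in $\bar{\calK}(X)$ to promote the bounding chain to $C_k(\calK(X,\fraks))$, deduce $[K(\bSi)]=0$, and contradict \cref{theo: non-vanishing theorem}. You merely spell out the fullness step more explicitly than the paper does, which is a reasonable elaboration rather than a different argument.
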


\begin{proof}
Assume that all elements in $\bS$ were to violate the adjunction inequalities.
Then we would have 
\begin{align}
\del(\sum_{\lambda \in \Lambda} c_{\lambda} \lb \si_{\lambda} \rb) = K(\bSi)
\label{eq: the bounding in not only in barcalK but also in calK}
\end{align}
for some $\Lambda$, $c_{\lambda}$, $\si_{\lambda}$, and some orientations of $\si_{\lambda}$ and of $K(\bSi)$,
where $\si_{\lambda}$ is a simplex not only of $\bar{\calK}(X)$ but also of $\calK(X, \fraks)$.
Hence the equation \eqref{eq: the bounding in not only in barcalK but also in calK} would imply that $[K(\bSi)] = 0$ in $H_{k-1}(\calK(X, \fraks);\Z)$.
This contradicts \cref{theo: non-vanishing theorem}.
\end{proof}

\begin{rem}
As we mentioned in \cref{rem: non-vanishing compare with usual SW invariant}, the usual \SW invariant vanishes on $X$ for any $\spc$ structure on it.
\Cref{cor: the adjunction inequality for infinitely many configurations} therefore gives the adjunction inequality for a $\spc$ $4$-manifold whose \SW invariant vanishes under some condition on geometric mutual intersections of embedded surfaces.
This point is similar to Strle~\cite{MR2064429} and the author's result~\cite{Konno}.

On the other hand, from \cref{ex: infinitely many bounding}, \cref{cor: the adjunction inequality for infinitely many configurations} asserts that {\it one} non-trivial element in $H_{\ast}(\calK(X, \fraks);\Z)$ provides constraints on {\it infinitely many} configurations of embedded surfaces.
This is a phenomenon newly found through our use of the simplicial homology theory.
\end{rem}

\begin{rem}
In the statement of \cref{cor: the adjunction inequality for infinitely many configurations}, if an element of $\bS$ is contained in $M$ before taking the connected sum with $N$, the adjunction inequality for the element follows from an argument of Kronheimer--Mrowka~\cite{MR1306022} since we have assumed that $\SWinv(M, \fraks_{0}) \neq 0$ in \cref{cor: the adjunction inequality for infinitely many configurations}.
\end{rem}

We can generalize the constraint on configurations given in \cref{cor: the adjunction inequality for infinitely many configurations} to surfaces with positive self-intersection number.
To state it, let us introduce a variant of $\bar{\calK}$:

\begin{defi}
For a smooth oriented closed connected $4$-manifold $X$, we define an abstract simplicial complex $\bar{\calK}_{\geq0} = \bar{\calK}_{\geq0}(X)$ as follows: 
\begin{itemize}
\item The vertices of $\bar{\calK}_{\geq0}$ are defined as smoothly embedded surfaces with non-negative self-intersection number.
\item For $n \geq 1$, a collection of $(n+1)$ vertices $\Si_0, \ldots, \Si_n \in V(\bar{\calK}_{\geq0})$ spans an $n$-simplex if and only if $\Si_0, \ldots, \Si_n$ are mutually disjoint.
\end{itemize}
\end{defi}

We now extend the concept of bounding collections of surfaces to finite subsets of $V(\bar{\calK}_{\geq0})$ by the quite same way in \cref{defi: bounding collection of surfaces}.

\begin{defi}
Let $\bSi$ be a wall-crossing collection of surfaces in $(N, \frakt)$.
We call a finite subset $\bS \subset V(\bar{\calK}_{\geq0}(X))$ a {\it bounding collection of surfaces in $X$ for $\bSi$} if the condition obtained by replacing $\bar{\calK}$ with $\bar{\calK}_{\geq0}$ in the defining condition in \cref{defi: bounding collection of surfaces} is satisfied.
\end{defi}

Using this extension of the concept of bounding collections of surfaces, we can generalize \cref{cor: the adjunction inequality for infinitely many configurations} as follows:

\begin{cor}
\label{cor: adjunction type constraint for surfaces with positive intersection}
Suppose that $\SWinv(M, \fraks_{0}) \neq 0$.
Let $\bSi = \{\Si_i^{\epsilon}\}_{1 \leq i \leq k, \epsilon \in \{+,-\}}$ be a wall-crossing collection of surfaces in $(N, \frakt)$.
Assume that $\Si_i^+$ and $\Si_i^-$ intersect transversally for each $i$.
Then, for any bounding collection of surfaces  $\bS \subset V(\bar{\calK}_{\geq0}(X))$ in $X$ for $\bSi$, there exists a surface $S \in \bS$ such that the inequality 
\begin{align}
 \Euler^-(S) \geq |c_1(\fraks) \cdot [S]| + [S]^{2}
 \label{eq: the adjunction inequality for positive self intersection}
 \end{align}
 holds.
 
 In particular, for $S \in V(\bar{\calK}_{\geq0}(X))$ with $S \cap \bSi = \emptyset$, the inequality \eqref{eq: the adjunction inequality for positive self intersection} holds.
\end{cor}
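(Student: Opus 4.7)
The plan is to reduce \cref{cor: adjunction type constraint for surfaces with positive intersection} to \cref{cor: the adjunction inequality for infinitely many configurations} by a blow-up construction that replaces each $S_i \in \bS = \{S_1, \ldots, S_r\}$ of self-intersection $m_i := [S_i]^2 \geq 0$ by a proper transform of the same genus but of self-intersection zero. For each $i$ I would choose $m_i$ distinct points in
\[
S_i \setminus \Bigl( \bigcup_{j \neq i} S_j \cup \bigcup_{i', \epsilon} \Si_{i'}^\epsilon \Bigr),
\]
which is a cofinite subset of $S_i$ by the disjointness relations defining the simplices in the bounding chain (any $S_j$ or $\Si_{i'}^\epsilon$ that is not disjoint from $S_i$ meets it in a discrete set, by perturbing to transverse position if necessary). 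Blowing up $X$ at the disjoint union of these $\sum_i m_i$ points yields $\tilde{X} = X \#\bigl(\sum_i m_i\bigr)(-\CP^2)$, and the proper transforms $\tilde{S}_i := S_i - \sum_{j=1}^{m_i} E_{i,j}$ are smoothly embedded surfaces with $\genus(\tilde{S}_i) = \genus(S_i)$ and $[\tilde{S}_i]^2 = 0$, so $\tilde{S}_i \in V(\bar{\calK}(\tilde{X}))$.

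Next I would equip $\tilde{X}$ with a $\spc$ structure $\tilde{\fraks}$ extending $\fraks$ by selecting, on each exceptional summand arising from a blow-up on $S_i$, the $\spc$ structure whose first Chern class is $\epsilon_{i,j} E_{i,j}$, where $\epsilon_{i,j} := \sign(c_1(\fraks) \cdot [S_i])$ (and $\epsilon_{i,j} := +1$ when the pairing vanishes). Using $[\tilde{S}_i] \cdot [E_{i,j}] = 1$ and $[\tilde{S}_i] \cdot [E_{i',j'}] = 0$ for $i' \neq i$, a direct computation gives
\[
c_1(\tilde{\fraks}) \cdot [\tilde{S}_i] = c_1(\fraks) \cdot [S_i] + \sum_{j=1}^{m_i} \epsilon_{i,j},
\]
so $|c_1(\tilde{\fraks}) \cdot [\tilde{S}_i]| = |c_1(\fraks) \cdot [S_i]| + [S_i]^2$ by the choice of signs. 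Depending on which side of the connected sum $X = M \# N$ contains each blow-up point, I write $\tilde{X} = \tilde{M} \# \tilde{N}$ with $\tilde{M}$ and $\tilde{N}$ iterated blow-ups of $M$ and $N$, and $\tilde{\fraks} = \tilde{\fraks}_0 \# \tilde{\frakt}$. The standard blow-up formula for the \SW invariant gives $\SWinv(\tilde{M}, \tilde{\fraks}_0) = \pm \SWinv(M, \fraks_0) \neq 0$; moreover $\tilde{N}$ still has the form $k\CP^2 \# l'(-\CP^2)$ with $l' \geq l \geq k$, and $c_1(\tilde{\frakt})$ still satisfies \eqref{eq: first Chern class defining property}. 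Since the new exceptional classes are orthogonal to every $[\Si_i^\pm]$, the collection $\bSi$ remains a wall-crossing collection of surfaces in $(\tilde{N}, \tilde{\frakt})$.

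I would then verify that $\tilde{\bS} := \{\tilde{S}_1, \ldots, \tilde{S}_r\}$ is a bounding collection of surfaces in $\tilde{X}$ for $\bSi$ in the sense of \cref{defi: bounding collection of surfaces}, now with respect to the complex $\bar{\calK}(\tilde{X})$ since every element has self-intersection zero. Because the blow-up points are chosen in the complement of all other surfaces of $\bSi \cup \bS$, each simplex $\si_\lambda \subset \bSi \cup \bS$ of $\bar{\calK}_{\geq 0}(X)$ appearing in the given bounding chain lifts to a simplex $\tilde{\si}_\lambda \subset \bSi \cup \tilde{\bS}$ of $\bar{\calK}(\tilde{X})$ by replacing each $S_i \in \si_\lambda$ with $\tilde{S}_i$, and the identity $\del\bigl(\sum_\lambda c_\lambda \langle \si_\lambda \rangle\bigr) = K(\bSi)$ transfers verbatim because $\bSi$ is untouched by the blow-ups.

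Finally, applying \cref{cor: the adjunction inequality for infinitely many configurations} to $(\tilde{X}, \tilde{\fraks})$ with wall-crossing collection $\bSi$ and bounding collection $\tilde{\bS}$ produces an index $i$ with $\Euler^-(\tilde{S}_i) \geq |c_1(\tilde{\fraks}) \cdot [\tilde{S}_i]|$, which by the identities above rearranges to $\Euler^-(S_i) \geq |c_1(\fraks) \cdot [S_i]| + [S_i]^2$, as required. The ``in particular'' clause follows by taking $\bS = \{S\}$, for which the single simplex $\{S\} \cup \bSi$ provides a bounding chain with boundary $K(\bSi)$. The only technical points that require care are the sign bookkeeping for the $\spc$ structures on the exceptional components and the verification that the bounding chain lifts cleanly to $\tilde{X}$; everything else is a direct application of \cref{cor: the adjunction inequality for infinitely many configurations} together with the blow-up formula.
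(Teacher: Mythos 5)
Your proof is correct and rests on the same engine as the paper's: blow up to kill the self-intersections, apply \cref{cor: the adjunction inequality for infinitely many configurations}, and pull the conclusion back using the \SW blowup formula. The route differs in a way worth noting, though. The paper performs an \emph{abstract} blowup on the $M$ side --- it sets $M' := M \# \bigl(\sum_i [S_i]^2\bigr)(-\CP^2)$ and replaces each $S_i$ by the ambient surface connected sum $S_i \# E_{j}\#\cdots$ with exceptional spheres carried over from $M'$, keeping $(N,\frakt)$ and hence the wall-crossing collection $\bSi$ completely untouched. You instead blow up at points \emph{on} $S_i$ and take proper transforms, which is the more geometric picture but forces you to track where the exceptional summands land: some may attach to $N$, producing a new $(\tilde N,\tilde\frakt)$ for which you must re-verify that $\bSi$ is still a wall-crossing collection and that $c_1(\tilde\frakt)$ still has the standard form (you do this, correctly, by noting orthogonality to the $[\Si_i^\pm]$ and re-choosing generators). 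The paper's construction avoids that bookkeeping entirely. On the other hand, your per-index choice $\epsilon_{i,j}=\sign(c_1(\fraks)\cdot[S_i])$ is more explicit than the paper's phrasing ``$\pm\sum_j E_j$''; with a literal single global sign the rearrangement $|c_1(\fraks')\cdot[S_i']|=|c_1(\fraks)\cdot[S_i]|+[S_i]^2$ only comes out right for indices of one sign, so per-index signs (which the blowup formula permits) are really what is needed, and you make this clean.

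One small imprecision in your last sentence: $\{S\}\cup\bSi$ is not a single simplex --- it has $2k+1$ vertices and $\Si_i^+$ need not be disjoint from $\Si_i^-$. The bounding chain for $\bS=\{S\}$ is the cone on $K(\bSi)$, namely the sum over $\bep\in\Signk$ of the $k$-simplices $\si_{\bep}\cup\{S\}$, with coefficients making its boundary equal $K(\bSi)$. The conclusion is unaffected, but the phrasing should be corrected.
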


\begin{proof}
Let us write $\bS = \{S_{1}, \ldots, S_{m}\}$ ($m>0$) and set $l_{i} := [S_{i}]^{2} \geq 0$ for each $i \in \{ 1, \ldots, m \}$.
We also set $l_{0} := 0$ and $l_{i}' := \sum_{a=0}^{i-1} l_{a}$ for each $i \in \{1, \ldots, m+1\}$.
Consider the blowup
\[
M' := M \# l_{1}(-\CP^{2}) \# \cdots \# l_{m}(-\CP^{2}).
\]
Let $\fraks_{0}'$ be the blowup $\spc$ structure on $M'$, whose first Chern class is given as
\[
c_{1}(\fraks_{0}') = c_{1}(\fraks_{0}) + \sum_{j=1}^{l_{m+1}'}E,
\]
where $E$ is a generator of $H^{2}(-\CP^{2}) \cong H_{2}(-\CP^{2})$.
We set $(X', \fraks') := (M' \# N, \fraks_{0}' \# \frakt)$.
Let $S_{i}'$ $(1 \leq i \leq m)$ be an embedded surface in $X'$ which is obtained from the connected sum of $S_{i}$ and exceptional curves in $l_{i}(-\CP^{2})$ and whose homology class is given as
\[
[S'_{i}] = [S_{i}] - \sum_{j=l_{i}'+1}^{l_{i+1}'}E
\]
if $c_{1}(\fraks) \cdot [S_{i}] \geq 0$, and
\[
[S'_{i}] = [S_{i}] + \sum_{j=l_{i}'+1}^{l_{i+1}'}E
\]
if $c_{1}(\fraks) \cdot [S_{i}] < 0$.
Set $\bS' := \{S_{0}', \ldots,S_{m}'\}$.
Then $\bS' \subset V(\bar{\calK}(X'))$ holds and $\bS'$ is a bounding collection of surfaces in $X'$ for $\bSi$.
Since the \SW invariant of $(M, \fraks_{0})$ coincides with that of the blowup $(M', \fraks_{0}')$ by the blowup formula, we can apply \cref{cor: the adjunction inequality for infinitely many configurations} for the new bounding collection of surfaces $\bS'$.
Then we have the adjunction inequality
\begin{align*}
\Euler^-(S'_{i}) \geq |c_1(\fraks') \cdot [S'_{i}]|
 \end{align*}
for some $i$.
If $c_{1}(\fraks) \cdot [S_{i}] \geq 0$, we have
\begin{align*}
\Euler^-(S_{i}) = \Euler^-(S'_{i})
\geq |c_1(\fraks') \cdot [S'_{i}]|
= |c_1(\fraks) \cdot [S_{i}] + [S_{i}]^{2}|
= |c_1(\fraks) \cdot [S_{i}]| + [S_{i}]^{2},
\end{align*}
and if $c_{1}(\fraks) \cdot [S_{i}] < 0$, we have
\begin{align*}
\Euler^-(S_{i}) = \Euler^-(S'_{i})
\geq |c_1(\fraks') \cdot [S'_{i}]|
= |c_1(\fraks) \cdot [S_{i}] - [S_{i}]^{2}|
= |c_1(\fraks) \cdot [S_{i}]| + [S_{i}]^{2}.
\end{align*}
Hence the inequality \eqref{eq: the adjunction inequality for positive self intersection}  holds for some $S \in \bS$.
\end{proof}

\subsection{Examples of wall-crossing collections of surfaces}
\label{subsection: Examples}

We give some examples of wall-crossing collections of surfaces to complete the proof of \cref{cor: non-vanishing on psc manifold} and to understand \cref{cor: adjunction type constraint for surfaces with positive intersection} concretely.

\begin{ex}
\label{ex: wall-crossing collection of surfaces}

Let $k$, $l_{i}$, $d_{+,i}$, and $d_{-,i}$ $(1 \leq i \leq k)$ be positive integers satisfying
\begin{align}
l_{i} \geq d_{\epsilon,i}^2 \geq 4
\label{eq: condition for d pm}
\end{align}
for each $\epsilon \in \{+, -\}$ and $i \in \{1, \ldots, k\}$.
We set $l := \sum_{i=1}^{k} l_{i}$, $l_{0} := 0$, and $l_{i}' := \sum_{a=0}^{i-1} l_{a}$ for each $i \in \{1, \ldots, k\}$.
Let us consider $N = k\CP^2 \# l(-\CP^2) = \#_{i=1}^k \CP^2_{i} \# (\#_{j=1}^l (-\CP^2))$ and let $H_{i}$, $E$, and $\frakt$ be the one in \cref{subsection: Ruberman's combination of wall-crossing and gluing arguments}.
Henceforth we identify $H^{2}(\CP^{2}_{i})$ with $H_{2}(\CP^{2}_{i})$ by the Poincar\'{e} duality and similarly $H^{2}(-\CP^{2})$ with $H_{2}(-\CP^{2})$.
Let $\Si^+_{i}, \Si^-_{i}\ (1 \leq i \leq k)$ be embedded surfaces in $N$
satisfying \eqref{eq: disjoint condition} and the following two conditions for each $i \in \{1, \ldots, k\}$:
\begin{align}
[\Si^\pm_{i}] &= d_{\pm,i}H_{i} \pm \sum_{j = l_{i}' + 1}^{l_{i}' + d_{\pm,i}^2} E,\ {\rm and} \label{eq: expression in homology level}\\
\genus(\Si^\pm_{i}) &= \frac{(d_{\pm,i}-1)(d_{\pm, i}-2)}{2}. \label{eq: genus given by algebraic curve}
\end{align}
Here double-signs $\pm$ correspond in the both sides of \eqref{eq: expression in homology level} and \eqref{eq: genus given by algebraic curve}.
(One can make such surfaces, for example, using connected sum in $N$ of a smooth algebraic curve of degree $d_{\pm, i}$ in $\CP^{2}_{i}$ $(1 \leq i \leq k)$ and the exceptional curves in $-\CP^{2}$'s.
In this section, if we say an algebraic curve it means a smooth algebraic curve.)
It is easy to check that $\Si^{\epsilon}_{i} \in V(\calK(N, \frakt))$ holds for each $\epsilon \in \{+,-\}$, $i \in \{1, \ldots, k\}$ and that the collection $\bSi = \{\Si_i^{\epsilon}\}_{1 \leq i \leq k, \epsilon \in \{+,-\}}$ is a wall-crossing collection of surfaces in $(N, \frakt)$.

From \cref{cor: adjunction type constraint for surfaces with positive intersection}, we therefore have the following constraint on configurations.
Let $(M, \fraks_0)$ be a smooth oriented closed $\spc$ $4$-manifold with $b^+(M) \geq 2$, $d(\fraks_0) = 0$, and $\SWinv(M, \fraks_0) \neq 0$.
Set $(X, \fraks) := (M \# N, \fraks_0\#\frakt)$.
Let $\bS \subset V(\bar{\calK}_{\geq0}(X))$ be a bounding collection of surfaces in $X$ for $\bSi$.
(As we mentioned in \cref{ex: infinitely many bounding}, there are infinitely many bounding collections of surfaces for $\bSi$.)
Then the inequality
\begin{align*}
 \Euler^-(S) \geq |c_1(\fraks) \cdot [S]| + [S]^{2}
 \end{align*}
holds for some surface $S \in \bS$.
\end{ex}

Similarly one can easily make other examples of wall-crossing collections of surfaces.
For example, the signs of $E$'s in \eqref{eq: expression in homology level} have some freedom for many $d_{\pm,i}$.
We also give other examples as follows.

\begin{ex}
\label{ex: wall-crossing collection of surfaces with coefficient 2}
Let $k$, $l_{i}$, $d_{+,i}$, and $d_{-,i}$ $(1 \leq i \leq k)$ be positive integers satisfying
$l_{i} \geq d_{\epsilon,i}^{2}-3$ and $d_{\epsilon,i} \geq 3$ for each $\epsilon \in \{+, -\}$ and $i \in \{1, \ldots, k\}$.
Let $l$, $l_{0}$, $l_{i}'$ and $(N, \frakt)$ be the one in \cref{ex: wall-crossing collection of surfaces}.
Let $\Si^+_{i}, \Si^-_{i}\ (1 \leq i \leq k)$ be embedded surfaces in $N$
satisfying \eqref{eq: disjoint condition}, \eqref{eq: genus given by algebraic curve}, and 
\begin{align*}
[\Si^\pm_{i}] &= d_{\pm,i}H_{i} + 2E_{l_{i}'+1} \pm \sum_{j = l_{i}'+2}^{l_{i}'+d_{\pm,i}^{2}-3} E
\end{align*}
for each $i \in \{1, \ldots, k\}$.
Since $2E_{l_{i}'+1}$ can be represented by a sphere in $-\CP^{2}_{l_{i}'+1}$, one can make such surfaces $\Si^{+}_{i}$ and $\Si^{-}_{i}$ as the connected sum of an algebraic curve of degree $d_{\pm,i}$ in $\CP^{2}_{i}$, the sphere in $-\CP^{2}_{l_{i}'+1}$, and the exceptional curves in $-\CP^{2}$ with $l_{i}'+2 \leq j \leq l_{i}'+d_{\pm,i}^{2}-3$.
It is easy to check that $\Si^{\epsilon}_{i} \in V(\calK(N, \frakt))$ holds for each $\epsilon$, $i$ and that the collection $\bSi = \{\Si_i^{\epsilon}\}_{1 \leq i \leq k, \epsilon \in \{+,-\}}$ is a wall-crossing collection of surfaces in $(N, \frakt)$.
\end{ex}

\begin{ex}
\label{ex: wall-crossing collection of surfaces times coefficient 2}
Let $k$, $l_{i}$, $d_{+,i}$, and $d_{-,i}$ $(1 \leq i \leq k)$ be positive integers satisfying
$d_{-,i} \in \{2,3\}$ for each $i \in \{1, \ldots, k\}$
and \eqref{eq: condition for d pm} for each $\epsilon \in \{+, -\}$ and $i$.
Let $l$, $l_{0}$, $l_{i}'$ and $(N, \frakt)$ be the one in \cref{ex: wall-crossing collection of surfaces}.
For each $i \in \{1, \ldots, k\}$, let $\Si^+_{i}$ be an embedded surface in $N$
satisfying the $+$-part of the conditions \eqref{eq: expression in homology level} and \eqref{eq: genus given by algebraic curve}, and $\Si^-_{i}$ be an embedded surface in $N$
satisfying
\begin{align*}
[\Si^-_{i}] &= 2d_{-,i}H_{i} - 2\sum_{j = l_{i}' + 1}^{l_{i}' + d_{-,i}^2} E,\ {\rm and}\\
\genus(\Si^-_{i}) &= \frac{(2d_{-,i}-1)(2d_{-, i}-2)}{2}.
\end{align*}
We also assume that $\Si^+_{i}$ and $\Si^-_{i}$ satisfy \eqref{eq: disjoint condition}.
One can make such surfaces $\Si^{+}_{i}$ and $\Si^{-}_{i}$ as the connected sum of an algebraic curve in $\CP^{2}_{i}$ and spheres in $-\CP^{2}$'s.
It is easy to check that $\Si^{\epsilon}_{i} \in V(\calK(N, \frakt))$ holds for each $\epsilon$, $i$ and that the collection $\bSi = \{\Si_i^{\epsilon}\}_{1 \leq i \leq k, \epsilon \in \{+,-\}}$ is a wall-crossing collection of surfaces in $(N, \frakt)$.
\end{ex}

\begin{rem}
\label{rem: comment on blowup formula}
We note that the consequences of \cref{cor: adjunction type constraint for surfaces with positive intersection} in \cref{ex: wall-crossing collection of surfaces,ex: wall-crossing collection of surfaces with coefficient 2,ex: wall-crossing collection of surfaces times coefficient 2} are generalizations of the following result obtained from  the blowup formula.
For simplicity we here focus our attention on the case of \cref{ex: wall-crossing collection of surfaces}.
Consider the wall-crossing collection of surfaces $\bSi$ in \cref{ex: wall-crossing collection of surfaces} obtained from the connected sum of algebraic curves and the exceptional curves.
Suppose that  there exists $\epsilon_{i} \in \{+,-\}$ such that $d_{\epsilon_{i},i} = 2$ for each $i \in \{1, \ldots,k\}$.
Let $\bS$ be a bounding collection of surfaces.
Assume that $S \cap \Si_{i}^{\epsilon_{i}} = \emptyset$ $(1\leq i \leq k)$ holds for some surface $S \in \bS$.
For example, this assumption is satisfied in the both case that $\bS$ consists of a single surface and that $\bS$ consists of two surfaces as in \cref{ex: infinitely many bounding} and $k=2$.
Then the adjunction inequality for $S$ follows from a surgery argument and the blowup formula for the usual \SW invariant. 
(This note on the blowup formula is due to Kouichi Yasui.)
The surgery argument, however, does not work in the case that $d_{\pm,i} > 2$ holds for any $i \in \{1, \ldots, k\}$, and therefore the constraint obtained from \cref{cor: adjunction type constraint for surfaces with positive intersection} on elements in $\bS$ cannot be shown using the blowup formula in this case.

More precisely, the surgery argument above is used to construct a negative definite closed $4$-manifold from $N$.
In the case that $d_{\pm,i} > 2$ holds for any $i \in \{1, \ldots, k\}$,
a closed $4$-manifold cannot be obtained from usual surgeries since they can be used only for spheres.
One cannot therefore use the usual blowup formula in this case.
Nevertheless, we can use the combination of wall-crossing and gluing arguments due to Ruberman~\cite{MR1671187, MR1734421, MR1874146};
information about the \SW equations on $M$ survives on the connected sum $X = M \# N$.
This suggests a possibility that one can extend the notion of ``negative definite closed $4$-manifolds" from the gauge theoretic point of view:
in our case, the complement of neighborhoods of suitable embedded surfaces in $N$ behaves as if it were a negative definite closed $4$-manifold.
\end{rem}

\subsection{Application to bounds on the complexity of configurations of surfaces}
\label{subsection: Application to bounds on the complexity of configurations of surfaces}

In this subsection we give a concrete application of \cref{cor: adjunction type constraint for surfaces with positive intersection} to give bounds on the complexity of configurations of surfaces.
The basic idea of this subsection is due to Strle~\cite{MR2064429}, in particular Section~15 of \cite{MR2064429}.

Let $(M, \fraks_0)$ be a smooth oriented closed $\spc$ $4$-manifold with $b^+(M) \geq 2$ and $d(\fraks_0) = 0$, and $k, l$ be natural numbers with $l \geq k > 0$.
We set $N = k\CP^2 \# l(-\CP^2)$ and $(X, \fraks) = (M \# N, \fraks_0\#\frakt)$, where $\frakt$ is the $\spc$ structure defined by \eqref{eq: first Chern class defining property} on $N$.

First, we give a bound on the complexity in the case that a bounding collection of surfaces consists of a single surface.
Recall that, for embedded surfaces $S, \Si$ in $X$, if $[S] \cdot [\Si]=0$ and $S$ intersects transversally with $\Si$, geometric intersection points between $S$ and $\Si$ appears as pairs of $\pm1$-intersection points.
Denote by $I(S,\Si)$ the number of the pairs of $\pm1$-intersection points between $S$ and $\Si$.
For several embedded surfaces $S_{1}, \ldots, S_{n}$,
we say that $S_{1}, \ldots, S_{n}$ are in general position if 
\begin{itemize}
\item for each $i,j \in \{1, \ldots, n\}$ with $i \neq j$, either $S_{i}$ intersects with $S_{j}$ transversally or $S_{i} \cap S_{j}=\emptyset$ holds, and
\item for each distinct three numbers $i, j, k \in \{1, \ldots, n\}$, $S_{i} \cap S_{j} \cap S_{k}=\emptyset$ holds.
\end{itemize}

\begin{theo}
\label{theo: bounds on the complexity of configurations}
Suppose that $\SWinv(M, \fraks_{0}) \neq 0$.
Let $\bSi = \{\Si_i^{\epsilon}\}_{1 \leq i \leq k, \epsilon \in \{+,-\}}$ be a wall-crossing collection of surfaces in $(N, \frakt)$.
Let $S \in V(\bar{\calK}_{\geq0}(X))$ be a surface satisfying that $[S] \cdot [\Si_i^{\epsilon}] = 0$ and that $S$ and $\Si_{i}^{\epsilon}$ $(\epsilon \in \{+,-\}, i \in \{1, \ldots, k\})$ are in general position.
Then we have
 \[
\max\{g(S) + I(S; \bSi) -1,0\} \geq \frac{|c_{1}(\fraks) \cdot [S]| + [S]^{2}}{2},
 \]
 where
 \[
 I(S; \bSi) := \sum_{1 \leq i \leq k, \epsilon \in \{+,-\}} I(S,\Si_i^{\epsilon}).
 \]
\end{theo}

\begin{proof}
First, we recall a procedure replacing a pair of $\pm1$-intersection points with the increasing the genus by $1$.
For a pair ${\mathbf p}=\{p_{+}, p_{-}\}$ of $\pm1$-intersection points between $S$ and $\Si_i^{\epsilon}$ for some $(i, \epsilon)$,
we may construct a $3$-dimensional $1$-handle $h^{1}_{\mathbf p}$ as follows.
Take a path $\gamma_{\mathbf p} : [0,1] \to \Si_i^{\epsilon}$ on $\Si_i^{\epsilon}$ from $p_{+}$ to $p_{-}$ so that $\gamma_{\mathbf p}([0,1])$ does not intersect with $\Si_{i'}^{\epsilon'}$ for all $(i', \epsilon')$ with $(i', \epsilon') \neq (i,\epsilon)$.
This condition can be achieved since the definition of general position implies that $S \cap \Si_i^{\epsilon} \cap \Si_{i'}^{\epsilon'} = \emptyset$ if $(i, \epsilon) \neq (i', \epsilon')$.
Fattening $\gamma_{\mathbf p}$ to an extra $2$-dimension in the normal direction in $X$, we obtain a $3$-dimensional $1$-handle $h^{1}_{\mathbf p} \cong D^{1} \times D^{2}$.
More precisely, $h^{1}_{\mathbf p}$ is defined to be (the total space of) the restriction of the normal disk bundle $D(\nu(\Si_i^{\epsilon}))$ of $\Si_i^{\epsilon}$ in $X$ to the arc $\gamma_{\mathbf p}$.
Eliminating neighborhoods $D_{+}^{2}, D_{-}^{2}$ of $p_{+}, p_{-}$ in $S$ and attaching the part of $\del h^{1}_{\mathbf p}$ which corresponds to $D^{1} \times \del D^{2}$, we obtain a surface $S_{\mathbf p}$ satisfying $g(S_{\mathbf p})=g(S)+1$.
In other words, $S_{\mathbf p}$ is the sum of $S\setminus(D_{+}^{2} \cup D_{-}^{2})$ with the normal sphere bundle $S(\nu(\Si_i^{\epsilon}))$ restricted on the arc $\gamma_{\mathbf p}$.
Then $S_{\mathbf p}$ does not intersect with $\Si_i^{\epsilon}$ anymore.
Taking $h^{1}_{\mathbf p}$ thin enough, we may assume that $[S_{\mathbf p}] = [S]$ and 
\begin{align}
\label{eq: a condition on one handle}
h^{1}_{\mathbf p} \cap \Si_{i'}^{\epsilon'} = \emptyset
\end{align}
for all $(i', \epsilon')$ with $(i', \epsilon') \neq (i, \epsilon)$.

Let $\hat{S}$ be the surface obtained by doing the above procedure to all pairs of $\pm1$-intersection points between $S$ and all $\Si_i^{\epsilon}$.
Here, taking the $1$-handles in the construction thin enough, we may assume that all of these $1$-handles do not intersect with each other.
Thus we may assume that $\hat{S}$ is an embedded surface, not immersed.
Note that $\hat{S}$ satisfies $[\hat{S}] = [S]$ and $g(\hat{S}) = g(S) + I(S;\bSi)$ by construction.
By condition \eqref{eq: a condition on one handle}, $\hat{S}$ does not intersect with any $\Si_i^{\epsilon}$, namely $\hat{S} \cap \bSi = \emptyset$.
Because of the general position assumption, $\Si_i^+$ and $\Si_i^-$ intersect transversally for each $i$.
Therefore, from \cref{cor: adjunction type constraint for surfaces with positive intersection}, we obtain that 
\begin{align*}
2\max\{g(S) + I(S; \bSi) -1,0\}
= \chi^{-}(\hat{S})
\geq |c_1(\fraks) \cdot [\hat{S}]| + [\hat{S}]^{2}
= |c_1(\fraks) \cdot [S]| + [S]^{2}.
\end{align*}
This proves the \lcnamecref{theo: bounds on the complexity of configurations}.
\end{proof}

\begin{ex}
\label{ex: K3 application1}
Let $M=K3$ be the underlying smooth $4$-manifold of a $K3$ surface.
Let $N = k\CP^2 \# l(-\CP^2) = \#_{i=1}^k \CP^2_{i} \# (\#_{j=1}^l (-\CP^2))$, where $k,l$ are constrained
as in \cref{ex: wall-crossing collection of surfaces}.
Let $\frakt$ be the $\spc$ structure on $N$ determined by \eqref{eq: first Chern class defining property}.
Let $\bSi = \{\Si_i^{\epsilon}\}_{1 \leq i \leq k, \epsilon \in \{+,-\}}$ be the wall-crossing collection of surfaces in $(N, \frakt)$ given in \cref{ex: wall-crossing collection of surfaces}.
Set $X=M\#N$.
Recall that the intersection form of $M$ can be written as $2(-E_8) \oplus 3 \left(\begin{smallmatrix}0&1\\1&0\end{smallmatrix}\right)$ with respect to a suitable basis of $H_{2}(M;\Z)$.
Let
\[
\xi = \left( 0_{2(-E_{8})}, \left(p_{i}, q_{i}\right)_{i=1}^{3}, 0_{N} \right)
\]
be a homology class in $H_{2}(X;\Z) = H_{2}(M;\Z) \oplus H_{2}(N;\Z)$ which is trivial except over the part of the basis corresponding to $3 \left(\begin{smallmatrix}0&1\\1&0\end{smallmatrix}\right)$.
Assume that $p_{i}, q_{i} > 0$.
Then we have $\xi^{2} = 2\sum_{i}p_{i}q_{i} > 0$.
Let $S$ be a smooth representative of $\xi$ such that 
$S$ and $\Si_{i}^{\epsilon}$ $(\epsilon \in \{+,-\}, i \in \{1, \ldots, k\})$ are in general position.
Then we have
\begin{align}
\label{eq: estimate on the complexity in the first ex}
g(S) + I(S; \bSi)
\geq \sum_{i=1}^{3} p_{i}q_{i}+1.
\end{align}
To deduce \eqref{eq: estimate on the complexity in the first ex}, let us choose the canonical $\spc$ structure on $M$ as $\fraks_{0}$.
Then we have $\SWinv(M,\fraks_{0}) \neq 0$.
It follows that $c_{1}(\fraks) \cdot [S] =0$ since $c_{1}(\fraks_{0})=0$.
Thus we obtain \eqref{eq: estimate on the complexity in the first ex} from \cref{theo: bounds on the complexity of configurations}.

Sicne $\CP^{2}\#2(-\CP^{2}) \cong S^{2} \times S^{2} \# (-\CP^{2})$,
by Theorem~3 in \cite{MR163323} by Wall, there exists a representative $S$ of $\xi$ with $g(S)=0$ provided that $\xi$ is primitive.
Note that, because of the adjunction inequality for $M=K3$, we cannot take such $S$ inside the direct summand $M$ in $X$.
The inequality \eqref{eq: estimate on the complexity in the first ex} implies that $S$ with $g(S)=0$ must have intersection points with $\Si_i^{\epsilon}$'s whose number is estimated by
\begin{align*}
I(S; \bSi)
\geq \sum_{i=1}^{3} p_{i}q_{i}+1.
\end{align*}
\end{ex}

Next, we shall give an example of a bounding collection of surfaces consisting of two surfaces.
To make the following discussion simple, let us consider $N = 2\CP^2 \# l(-\CP^2)$ and the configuration 
given in the right picture in \cref{figure : Some bounding for KbSi}.
For a wall-crossing collection of surfaces $\bSi$, a surface $\Si \in \bSi$, and another embedded surface $S$,
if $[S] \cdot [\Si']=0$ for all $\Si' \in \bSi \setminus \{\Si\}$, let us define
\[
I(S;\bSi \setminus \{\Si\}) := \sum_{\Si' \in \bSi \setminus \{\Si\}} I(S,\Si').
\]

\begin{theo}
\label{theo: bounds on the complexity of configurations consisting of two surfaces}
Suppose that $\SWinv(M, \fraks_{0}) \neq 0$.
Let $\bSi = \{\Si_i^{\epsilon}\}_{1 \leq i \leq 2, \epsilon \in \{+,-\}}$ be a wall-crossing collection of surfaces in $(N, \frakt)=(2\CP^2 \# l(-\CP^2), \frakt)$.
Let $S,S' \in V(\bar{\calK}_{\geq0}(X))$ be surfaces satisfying that
\begin{align*}
[S] \cdot [\Si_2^{\pm}] = [S] \cdot [\Si_1^{+}] = 0,\ 
[S'] \cdot [\Si_2^{\pm}] = [S'] \cdot [\Si_1^{-}] = 0,\ {\rm and}\ 
[S] \cdot [S'] = 0
\end{align*}
and that $S$, $S'$, $\Si_{1}^{\pm}$, and $\Si_{2}^{\pm}$ are in general position.
Assume that we have
\begin{align}
\label{eq: violating for S}
\chi^{-}(S) + 2I(S;\bSi \setminus {\{\Si_{1}^{-}\}}) &< |c_{1}(\fraks) \cdot [S]| + [S]^{2},\\
\label{eq: violating for S'}
\chi^{-}(S') + 2I(S;\bSi \setminus {\{\Si_{1}^{+}\}}) &< |c_{1}(\fraks) \cdot [S']| + [S']^{2}.
\end{align}
Then the inequality
\begin{align}
\label{eq: ineq for theo: bounds on the complexity of configurations consisting of two surfaces}
&\min\left\{ \max\{g(S)-1,0\} + g(S'), \max\{g(S')-1,0\}  + g(S)\right\} + I(S,S')\\
\geq& \frac{|c_{1}(\fraks) \cdot [S]|+[S]^{2} + |c_{1}(\fraks) \cdot [S']|+[S']^{2}}{2}\nonumber
 \end{align}
 holds.
\end{theo}

\begin{proof}
Let $\breve{S}$ be a surface obtained from $S$ so that all pairs of $\pm1$-intersection points with $\Si_{1}^{+}, \Si_{2}^{\pm}$ are replaced with $1$-handles via the procedure in the first paragraph in the proof of \cref{theo: bounds on the complexity of configurations}.
Similarly, let $\breve{S'}$ be a surface obtained from $S'$ so that all intersection points with $\Si_{1}^{-}, \Si_{2}^{\pm}$ are resolved.
Then we have
\[
g(\breve{S}) = g(S) + I(S;\bSi \setminus \{\Si_{1}^{-}\})
\]
and 
\[
g(\breve{S'}) = g(S') + I(S';\bSi \setminus \{\Si_{1}^{+}\}).
\]
Therefore, by the assumptions \eqref{eq: violating for S}, \eqref{eq: violating for S'}, 
both of $\breve{S}$ and $\breve{S'}$ violate the adjunction inequalities, namely
\begin{align*}
\chi^{-}(\breve{S})&< |c_{1}(\fraks) \cdot [S]| + [S]^{2},\\
\chi^{-}(\breve{S'}) &< |c_{1}(\fraks) \cdot [S']| + [S']^{2}.
\end{align*}
Now we argue precisely as in the proof of Theorem~15.1 in \cite{MR2064429}
First, note that $\breve{S}$ and $\breve{S'}$ have intersection points.
This is deduced as follows.
If $\breve{S}$ and $\breve{S'}$ are disjoint, since we have made these surfaces so that there do not exist intersection points with $\Si_{i}^{\epsilon}$'s, the configuration of $\breve{S}$, $\breve{S'}$, and $\Si_{i}^{\epsilon}$'s are as in the right picture in \cref{figure : Some bounding for KbSi}.
Then, as noted in \cref{ex: infinitely many bounding}, 
$\{\breve{S}, \breve{S'}\}$ is a bounding collection of surfaces for $\bSi$.
Since both of $\breve{S}$ and $\breve{S'}$ violate the adjunction inequalities, by \cref{cor: adjunction type constraint for surfaces with positive intersection}, $\breve{S}$ and $\breve{S'}$ must intersect.

Let us attach a $1$-handle $h^{1}_{1}$ to $S$ to eliminate one pair of $\pm1$-intersection points between $S$ and $S'$.
If the resulting surface $\breve{S} \cup h^{1}_{1}$ still violates the adjunction inequality, $\breve{S} \cup h^{1}_{1}$ and $\breve{S}'$ have intersection points by \cref{cor: adjunction type constraint for surfaces with positive intersection} again,
and therefore we may repeat this procedure and obtain a new surface $\breve{S} \cup h^{1}_{1} \cup h^{1}_{2}$.
Continuing this procedure until when the resulting surface satisfies the adjunction inequality,
we deduce that
there exists uniquely $k \in \{1, \ldots, I(S,S')\}$ such that
\begin{align}
\label{eq: h1k handle}
\chi^{-}(\breve{S} \cup h^{1}_{1} \cup \cdots \cup h^{1}_{k})
&\geq |c_{1}(\fraks) \cdot [S]| + [S]^{2},\\
\label{eq: h1k1 handle}
\chi^{-}(\breve{S} \cup h^{1}_{1} \cup \cdots \cup h^{1}_{k-1})
&< |c_{1}(\fraks) \cdot [S]| + [S]^{2}.
\end{align}
Define $\hat{S} := \breve{S} \cup h^{1}_{1} \cup \cdots \cup h^{1}_{k-1}$, and define $\hat{S}'$ as the surface obtained by attaching the all remaining handles to eliminate the all remaining intersection points:
$\hat{S}' = \breve{S}' \cup h^{1}_{k} \cup \cdots \cup h^{1}_{I(S,S')}$.
Note that $\hat{S}$ satisfies
\begin{align}
\label{eq: h1k handle2}
\chi^{-}(\hat{S})
&\geq |c_{1}(\fraks) \cdot [S]| + [S]^{2}-2
\end{align}
because of \eqref{eq: h1k handle}.
By \eqref{eq: h1k1 handle} and \cref{cor: adjunction type constraint for surfaces with positive intersection},
$\hat{S}'$ satisfies the adjunction inequality:
\[
\chi^{-}(\hat{S}') \geq |c_{1}(\fraks) \cdot [S']| + [S']^{2}.
\]
This inequality and the inequality \eqref{eq: h1k handle2}
imply that
\begin{align}
\label{eq: chichiprime adj}
\chi^{-}(\hat{S})+\chi^{-}(\hat{S}')
\geq |c_{1}(\fraks) \cdot [S]| + [S]^{2} + |c_{1}(\fraks) \cdot [S']| + [S']^{2}-2.
\end{align}
Let us calculate the left-hand side of this inequality.
First, since $g(S')-1 + I(S,S') -(k-1) \geq 0$, we have that
\begin{align}
\label{eq: chichiprime adj3}
&\frac{\chi^{-}(\hat{S})+\chi^{-}(\hat{S}')}{2}\\
=& \max\{g(S)-1 + k-1,0\}
+ g(S')-1 + I(S,S') -(k-1)\nonumber\\
=&\max\{g(S)-1, -(k-1)\}
+ g(S') -1 +I(S,S')\nonumber\\
\leq &\max\{g(S)-1, 0\}
+ g(S') -1 +I(S,S').\nonumber
\end{align}
It follows from the inequalities \eqref{eq: chichiprime adj} and \eqref{eq: chichiprime adj3} that
\begin{align}
\max\{g(S)-1,0\}
+ g(S') + I(S,S')
\geq \frac{|c_{1}(\fraks) \cdot [S]| + [S]^{2} + |c_{1}(\fraks) \cdot [S']| + [S']^{2}}{2}.
\label{eq: chichiprime adj4}
\end{align}

Repeating the argument in the last paragraph under swapping the roles of $S$ and $S'$, we obtain
\begin{align}
\max\{g(S')-1,0\}
+ g(S) + I(S,S')
\geq \frac{|c_{1}(\fraks) \cdot [S]| + [S]^{2} + |c_{1}(\fraks) \cdot [S']| + [S']^{2}}{2}.
\label{eq: chichiprime adj5}
\end{align}
Then the desired inequality \eqref{eq: ineq for theo: bounds on the complexity of configurations consisting of two surfaces} is obtained combining \eqref{eq: chichiprime adj4} and \eqref{eq: chichiprime adj5}.
\end{proof}

\begin{ex}
As well as \cref{ex: K3 application1}, let $M=K3$ be the underlying smooth $4$-manifold of a $K3$ surface.
Let $N_{1}$ be a copy of $\CP^2 \# l_{1}(-\CP^2)$,
where $l_{1}$ is constrained
as in \cref{ex: wall-crossing collection of surfaces}.
Let $\frakt_{1}$ be the $\spc$ structure on $N_{1}$ determined by \eqref{eq: first Chern class defining property} for $k=1$.
Let $\bSi_{1} = \{\Si_1^{+}, \Si_{1}^{-}\}$ be the wall-crossing collection of surfaces in $(N_{1}, \frakt_{1})$ given in \cref{ex: wall-crossing collection of surfaces}.
As well as \cref{ex: K3 application1}, let
\begin{align*}
\xi &= \left( 0_{2(-E_{8})}, \left(p_{1}, q_{1},0,0,0,0\right), 0_{N} \right),\\
\xi' &= \left( 0_{2(-E_{8})}, \left(0,0,p_{2}, q_{2},0,0\right), 0_{N} \right)
\end{align*}
be homology classes in $H_{2}(M\#N_{1};\Z)$ which are trivial except over the part of the basis corresponding to the first and second direct summands of $3\left(\begin{smallmatrix}0&1\\1&0\end{smallmatrix}\right)$ respectively.
Assume that $p_{i}, q_{i} > 0$ and that $\xi$ and $\xi'$ are primitive.
Applying Theorem~3 in \cite{MR163323} by Wall to $M\# N_{1} \cong M\# (S^{2} \times S^{2}) \#(-\CP^{2})$,
we may take representatives $S$ and $S'$ in $M\#N_{1}$ of $\xi$ and $\xi'$ 
so that $g(S) = g(S') = 0$.
Assume that $S,S',\Si_{1}^{+}, \Si_{1}^{+}$ are in general position.
We shall deduce from \cref{theo: bounds on the complexity of configurations consisting of two surfaces} that at least one of the following three inequalities holds:
\begin{align}
\label{eq: at least one of three}
\begin{cases}
I(S,\Si_{1}^{+}) \geq 2p_{1}q_{1},\\
I(S',\Si_{1}^{-}) \geq 2p_{2}q_{2},\\
I(S,S') \geq p_{1}q_{1} + p_{2}q_{2}.
\end{cases}
\end{align}

Before starting to deduce this constraint, it is interesting to compare it with the result obtained in \cref{ex: K3 application1}.
\cref{ex: K3 application1} ensures that {\it both} of the following two inequalities hold:
\begin{align}
\label{eq: both of two}
\begin{cases}
I(S; \{\Si_{1}^{+}, \Si_{1}^{-}\}) \geq p_{1}q_{1}+1,\\
I(S'; \{\Si_{1}^{+}, \Si_{1}^{-}\}) \geq p_{2}q_{2}+1.
\end{cases}
\end{align}
Obviously the condition that both of the inequalities in \eqref{eq: both of two} holds does not imply the condition that at least one of the inequalities in \eqref{eq: at least one of three} holds, and vice versa.
These two constraints are obtained from \cref{theo: bounds on the complexity of configurations} and \cref{theo: bounds on the complexity of configurations consisting of two surfaces} respectively,
and the main difference between these two theorems are choices of bounding collections of surfaces:
\cref{theo: bounds on the complexity of configurations} involves a bounding collection consisting of a single surface, and 
\cref{theo: bounds on the complexity of configurations consisting of two surfaces} involves a bounding collection consisting of two surfaces.
Therefore these two different constraints exhibit the freedom of choice of bounding collections provides several different geometric constraints.

Now let us deduce that at least one of the inequalities in \eqref{eq: at least one of three} holds.
To see this, it is enough to show that we have the last inequality in \eqref{eq: at least one of three} provided that both of
\begin{align}
\label{eq: two of three hold}
\begin{cases}
I(S,\Si_{1}^{+}) < 2p_{1}q_{1},\\
I(S',\Si_{1}^{-}) < 2p_{2}q_{2}
\end{cases}
\end{align}
hold.
Let $N_{2}$ be a copy of $\CP^2 \# l_{2}(-\CP^2)$,
where $l_{2}$ is constrained
as in \cref{ex: wall-crossing collection of surfaces}.
Set $N = N_{1} \# N_{2}$ and $X=M\#N$.
Let $\frakt$ be the $\spc$ structure on $N$ determined by \eqref{eq: first Chern class defining property} for $k=2$, and $\fraks_{0}$ be the canonical $\spc$ structure on $M$, i.e. the spin$^{c}$ structure coming from the unique spin structure on $M$.
Adding new surfaces $\{\Si_{2}^{+}, \Si_{2}^{-}\}$ in $N_{2}$ to $\bSi_{1}$, we obtain a wall-crossing collection of surfaces $\bSi=\{\Si_{1}^{+}, \Si_{1}^{-}, \Si_{2}^{+}, \Si_{2}^{-}\}$ in $(N, \frakt)$ as in \cref{ex: wall-crossing collection of surfaces}.
Since $S, S'$ are inside $M\# N_{1}$, they do not intersect with $\Si_{2}^{\pm}$.
Therefore we have
\[
I(S,\Si_{1}^{+}) = I(S; \bSi \setminus \{\Si_{1}^{-}\}),
\quad I(S',\Si_{1}^{-}) = I(S'; \bSi \setminus \{\Si_{1}^{+}\}).
\]
This and \eqref{eq: two of three hold} imply that $S$ and $S'$ satisfy the all assumptions of \cref{theo: bounds on the complexity of configurations consisting of two surfaces}.
Therefore it follows from \cref{theo: bounds on the complexity of configurations consisting of two surfaces} that the last inequality in \eqref{eq: at least one of three} holds.
\end{ex}

\section{Concluding remarks}

Finally, we remark some further potential developments of the invariant $\SWcoh(X, \fraks)$.

\begin{rem}
For a closed $\spc$ $4$-manifold $(X, \fraks)$, one can define a subcomplex $\calK_{\geq0}(X, \fraks)$ of $\bar{\calK}_{\geq0}(X)$ by considering all embedded surfaces violating the adjunction inequalities with respect to $\fraks$ and having non-negative self-intersection number.
The author expects that we can construct an invariant which is formulated as a cohomology class on $\calK_{\geq0}(X, \fraks)$ in a similar vein of the construction of $\SWcoh(X, \fraks)$.
There are two differences between $\calK_{\geq0}(X, \fraks)$ and $\calK(X, \fraks)$ from the point of view of the construction of a cohomological invariant.
The first one is on the stretching construction of families of Riemannian metrics in \cref{subsection: Construction of a family of Riemannian metrics}, and the second is on the quantitative estimate given as \cref{prop: vanishing on certain face} for the length of stretched cylinders to assure the non-existence of solutions to the \SW equations.
For the first difference, the stretching argument can be replaced with the stretching of metrics for the fiber-direction of the normal bundles of embedded surfaces in general. 
However, for the second difference, the author does not know any quantitative estimate as in \cref{prop: vanishing on certain face} for surfaces of positive self-intersection number at this stage.
\end{rem}

\begin{rem}
Bauer--Furuta~\cite{MR2025298} have given a refinement, called the Bauer--Furuta invariant, of the usual \SW invariant.
This is defined as the stable cohomotopy class of a finite-dimensional approximation of the \SW equations.
The author expects that we may define such a refinement of our invariant $\SWcoh(X, \fraks)$.
If we can define such an invariant, we can expect a generalization of \cref{theo: non-vanishing theorem}: we might replace $(M, \fraks_{0})$ in \cref{theo: non-vanishing theorem} with a $\spc$ $4$-manifold whose Bauer--Furuta invariant does not vanish.
Since there are $\spc$ $4$-manifolds such that their Bauer--Furuta invariants do not vanish but their \SW invariants do vanish,
this expected non-vanishing theorem will provide new constraints on configurations of embedded surfaces as in \cref{cor: the adjunction inequality for infinitely many configurations,cor: adjunction type constraint for surfaces with positive intersection}.
\end{rem}

\begin{rem}
We have considered only closed $4$-manifolds in this paper.
However, in general, one can expect that a gauge theoretical invariant of closed $4$-manifolds is generalized to an invariant of $4$-manifolds with boundary using the Floer homology theory.
The author conjectures that our invariant $\SWcoh(X, \fraks)$ can be also generalized for $4$-manifolds with boundary.
The first thing we have to do is to define the complex of surfaces for $4$-manifolds with boundary;
two candidates for it might be considered.
The first one is the simplicial complex obtained by considering closed surfaces embedded into the interior of the given $4$-manifold, and the second is that obtained by considering surfaces with boundaries embedded into the given $4$-manifold such that the boundary of each surface is in that of the $4$-manifold.
The author expects that $\SWcoh(X, \fraks)$ can be generalized to a cohomology class at least on the first candidate.
On the other hand, he also expects that, if $\SWcoh(X, \fraks)$ can be generalized to the second candidate, it might relate to some variant of the Kakimizu complex~\cite{MR1177053} in $3$-dimensional topology and knot theory.
\end{rem}

\appendix

\section{Simple properties of complex of surfaces}

In this appendix, we prove that the complex of surfaces $\bar{\calK}(X)$ is contractible for any 4-manifold $X$.

\begin{lem}
\label{lem : cone bouding}
Let $K = \bar{\calK}$ or $\calK$.
Let $z = \sum_{k=1}^m a_k \lb \si_k \rb \in C_{n}(K)$ be an $n$-cycle, where $n \geq 1$, $m \geq 1$, $a_k \in \Z$ and $\si_k \in S_{n}(K)$.
If there exists $\Si \in V(K)$ such that $\Si \notin \si_k$ holds for any $k$, then
\[
[z] = 0\ {\it in}\ H_{n}(K;\Z)
\]
holds.
\end{lem}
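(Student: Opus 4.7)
The plan is the classical cone (``join'') construction in simplicial homology. For each $k$, I would first observe that the vertex set $\{\Sigma\} \cup \sigma_k$ spans an $(n+1)$-simplex of $K$: the hypothesis $\Sigma \notin \sigma_k$ is read, in this combinatorial-geometric setting, as asserting that $\Sigma$ is (disjoint from and in particular distinct from) every vertex of $\sigma_k$; combined with the fact that the vertices of $\sigma_k$ are themselves mutually disjoint (they already span a simplex of $K$), this produces an admissible $(n+1)$-simplex of $\bar{\calK}$. For $K = \calK$, no extra check is needed since the adjunction-violating condition for each vertex involved is automatic from $\Sigma \in V(\calK)$ and $\sigma_k \in S(\calK)$.

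Next I would introduce the cone chain
\[
\Sigma \ast z := \sum_{k=1}^m a_k \lb \Sigma, \sigma_k \rb \in C_{n+1}(K),
\]
with the orientation convention placing $\Sigma$ in the initial slot. Applying the simplicial boundary operator to a single summand gives the standard cone identity
\[
\partial \lb \Sigma, v_0, \ldots, v_n \rb = \lb v_0, \ldots, v_n \rb - \Sigma \ast \partial \lb v_0, \ldots, v_n \rb,
\]
and summing with coefficients $a_k$ yields $\partial(\Sigma \ast z) = z - \Sigma \ast \partial z$. Since $z$ is a cycle, $\partial z = 0$, hence $\partial(\Sigma \ast z) = z$, exhibiting $z$ as a boundary and giving $[z] = 0$ in $H_n(K;\Z)$.

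This is the standard contractible-cone argument, and I do not foresee any substantive obstacle. The only point worth verifying carefully is that the cone operation stays inside $K$, and this is exactly what the disjointness of $\Sigma$ from the vertices of each $\sigma_k$ (the intended content of the hypothesis) arranges; if one prefers the strict vertex-set reading of $\Sigma \notin \sigma_k$, one replaces $\Sigma$ by a small isotopy or a parallel copy of $\Sigma$ (available because $[\Sigma]^2 = 0$) pushed off the finitely many surfaces appearing in the support of $z$, which still yields a vertex of $K$ with the required disjointness.
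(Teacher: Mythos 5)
Your main argument is exactly the paper's cone construction: form the chain $\sum_k a_k \lb \Sigma, \sigma_k \rb$ and observe that its boundary is $z$ because $\partial z = 0$. The paper states this via an isomorphism of free $\Z$-modules (matching each $(n-1)$-face $\lb \ldots, \hat{\Si}_{l,k}, \ldots \rb$ with its cone $\lb \Si, \ldots, \hat{\Si}_{l,k}, \ldots \rb$), while you invoke the standard cone identity $\partial(\Sigma * z) = z - \Sigma * \partial z$; these are the same computation. You are also correct that the construction requires $\{\Sigma\} \cup \sigma_k$ to be a simplex of $K$ for every $k$ --- i.e.\ $\Sigma$ must be disjoint from each vertex of each $\sigma_k$, which is strictly stronger than the literal ``$\Sigma \notin \sigma_k$''. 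The paper uses the stronger disjointness tacitly, when it calls $\lb \Si, \si_k \rb$ an $(n+1)$-simplex without comment.

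The one genuine misstep is your closing fall-back. Replacing $\Sigma$ by an isotope or parallel copy ``pushed off the finitely many surfaces appearing in the support of $z$'' is not available in general: a parallel copy exploits $[\Sigma]^2 = 0$ to push $\Sigma$ off \emph{itself}, not off other surfaces, and no ambient isotopy of $\Sigma$ can remove intersections with a vertex $\Sigma'$ of some $\sigma_k$ when $[\Sigma]\cdot[\Sigma'] \neq 0$ (and in dimension four even algebraically trivial intersections are not in general removable). The resolution is not to repair the weak hypothesis but to read the lemma as requiring the disjointness that the proof actually uses; in the place the lemma is applied (the remark following \cref{prop : any complex of surface is contractible}), the surface $\Sigma$ is produced from scratch --- e.g.\ a null-homologous surface supported in a ball away from the finitely many vertices involved --- so disjointness holds by construction.
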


\begin{proof}
Let us define an oriented $(n+1)$-simplex $\lb \tau_k \rb$ of $K$ by
$\lb \tau_k \rb := \lb \Si, \si_k \rb$.
More precisely, when we write $\lb \si_k \rb = \lb \Si_{0,k}, \ldots, \Si_{n,k} \rb$,
we define
\[
\lb \tau_k \rb := \lb \Si, \Si_{0,k}, \ldots, \Si_{n,k}  \rb.
\]
Then we obtain
\begin{align*}
\del(\sum_{k=1}^m a_k \lb \tau_k \rb )
= z + \sum_{k=1}^m a_k \sum_{l=0}^n (-1)^{l+1} \lb \Si, \Si_{0,k}, \ldots, \hat{\Si}_{l,k} ,\ldots,\Si_{n,k} \rb.
\end{align*}
We note an isomorphism between free modules
\begin{align*}
&\Z \Set{ \lb \Si_{0,k}, \ldots, \hat{\Si}_{l,k} ,\ldots,\Si_{n,k} \rb | 1 \leq k \leq m,\ 0 \leq l \leq n }\\
&\to \Z \Set{ \lb \Si, \Si_{0,k}, \ldots, \hat{\Si}_{l,k} ,\ldots,\Si_{n,k} \rb | 1 \leq k \leq m,\ 0 \leq l \leq n }
\end{align*}
defined by
\begin{align*}
\lb \Si_{0,k}, \ldots, \hat{\Si}_{l,k} ,\ldots,\Si_{n,k} \rb
\mapsto \lb \Si, \Si_{0,k}, \ldots, \hat{\Si}_{l,k} ,\ldots,\Si_{n,k} \rb.
\end{align*}
The equality $\del z=0$ in the first module hence implies that $\del(\sum_{k=1}^m a_k \lb \tau_k \rb ) = z$.
\end{proof}

\begin{prop}
\label{prop : any complex of surface is contractible}
The simplicial complex $\bar{\calK} = \bar{\calK}(X)$ is contractible for any $X$.
\end{prop}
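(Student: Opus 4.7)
My plan is to show that $|\bar{\calK}(X)|$ is weakly contractible; since it is a CW complex, Whitehead's theorem then yields contractibility. The strategy is to upgrade \cref{lem : cone bouding} from cycles to compacta: the only ingredient used there is the existence of a vertex disjoint from the support of the cycle, and for any finite subcomplex such a vertex can always be produced using a small sphere.

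The key lemma I would isolate first is the following: for every finite subcomplex $L \subset \bar{\calK}(X)$, there exists a vertex $\Si_L \in V(\bar{\calK}(X))$ that is disjoint from every vertex of $L$. Indeed, write $V(L) = \{\Si_1, \ldots, \Si_k\}$. Each $\Si_i$ is a $2$-dimensional submanifold of the $4$-manifold $X$, so the finite union $\bigcup_{i=1}^{k} \Si_i$ is a closed subset of $X$ with empty interior, and its complement contains a smoothly embedded open $4$-ball $B$. Let $\Si_L \subset B$ be the boundary of a smaller concentric closed $4$-ball; this is a smoothly embedded oriented $2$-sphere, and since $\Si_L$ bounds in $X$ we have $[\Si_L]=0$ in $H_2(X;\Z)$, so in particular $[\Si_L]^2 = 0$. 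Thus $\Si_L \in V(\bar{\calK}(X))$ and $\Si_L \cap \Si_i = \emptyset$ for every $i$.

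Next, I would verify that with this choice of $\Si_L$ the simplicial join $\Si_L * L$ is a subcomplex of $\bar{\calK}(X)$. For any simplex $\si \in S(L)$ the vertices of $\si$ are mutually disjoint surfaces and each is disjoint from $\Si_L$, so $\{\Si_L\} \cup \si$ again consists of mutually disjoint surfaces and spans a simplex of $\bar{\calK}(X)$. Hence $|L| \subset |\Si_L * L| \subset |\bar{\calK}(X)|$, and $|\Si_L * L|$ is contractible, being the geometric realization of a cone with apex $\Si_L$.

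Finally, to conclude weak contractibility, let $f : S^n \to |\bar{\calK}(X)|$ be any continuous map. By compactness $f(S^n)$ is contained in $|L|$ for some finite subcomplex $L \subset \bar{\calK}(X)$. Applying the construction above, $f$ factors through the contractible space $|\Si_L * L|$ and is therefore null-homotopic in $|\bar{\calK}(X)|$. Thus $\pi_n(|\bar{\calK}(X)|)=0$ for every $n \geq 0$, and $|\bar{\calK}(X)|$ is contractible. The only point requiring care is checking that the small sphere $\Si_L$ really qualifies as a vertex of $\bar{\calK}(X)$, but this is immediate because it bounds a $4$-ball; no analytic input is needed, and the argument works uniformly in $X$.
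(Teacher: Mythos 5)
Your argument is correct and follows essentially the same route as the paper: reduce to a finite subcomplex by compactness, cone it off inside $\bar{\calK}(X)$ using a vertex disjoint from all surfaces involved, conclude all homotopy groups vanish, and invoke the Whitehead theorem. The paper is terser about where the coning vertex comes from (it is addressed only in the remark following the proposition via null-homologous surfaces), whereas you make it explicit by taking a small $2$-sphere bounding a $4$-ball in the complement of the finitely many surfaces of $L$, but the substance of the argument is identical.
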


\begin{proof}
We show that $\pi_{n}(\bar{\calK}) = 0$ for any $n \geq 0$.
For any continuous map $f : S^n \to |\bar{\calK}|$, there exist finite simplices $\sigma_1, \ldots, \sigma_m$ such that the image of $f$ is contained in the union of the geometric realizations of these simplices.
By the argument in the proof of \cref{lem : cone bouding}, the union $\bigcup_{k=1}^m |\sigma_k|$ is contained in its cone $C(\bigcup_{k=1}^m |\sigma_k|) \subset |\bar{\calK}|$, and the map $f$ can be deformed into the constant map to the vertex point of the cone in $\bar{\calK}$.
Hence we have $\pi_{n}(\bar{\calK}) = 0$, and $\bar{\calK}$ is contractible by the J. H. C. Whitehead theorem.
\end{proof}

\begin{rem}
For an embedded surface $\Si \subset X$ with self-intersection number zero, by pushing $\Si$ into the fiber direction of its normal bundle, we obtain an embedded surface whose homology class coincides with $\Si$'s and which has no geometric intersections with $\Si$.
We call such a surface a {\it parallel copy} of $\Si$.
The complex $\calK$ is a very huge space, however, homology group absorbs the hugeness in the following sense.
For $n \geq 0$, let us take a cycle $z \in C_{n}(\calK)$.
Let $z' \in C_{n}(\calK)$ be a cycle obtained by replacing a surface which is a part of $z$ with a parallel copy of the surface.
Then $[z] = [z']$ in $H_{n}(\calK;\Z)$ holds.

To see this, we write $z = \sum_{k=1}^m a_k \lb \si_k \rb$, where $m \geq 1$, $a_k \in \Z$ and $\si_{k} \in S_{n}(\calK)$.
We also write 
$\lb \si_k \rb
= \lb \Si_{0,k}, \ldots, \Si_{n, k} \rb$.
Let $\Si_{0,1}$ be the surface which is replaced with its parallel copy in the definition of $z'$, and $\Si_{0,1}'$ be the parallel copy of $\Si_{0,1}$.
We define an oriented simplex  $\lb \si_k' \rb$ as follows.
If $\Si_{0,1} \in \si_{k}$ holds, we define $\lb \si_k' \rb$ as the simplex obtained by replacing $\Si_{0,1}$ with $\Si_{0,1}'$ from $\lb \si_k \rb$.
If $\Si_{0,1} \notin \si_{k}$ holds, we define $\lb \si'_k \rb = \lb \si_k \rb$.
Then $z' = \sum_{k=1}^m a_k \lb \si_k' \rb$ holds.
By changing the order of indices,  we may assume that
\begin{align*}
1 \leq k \leq m' &\Rightarrow \lb \si'_{k} \rb \neq \lb \si_{k} \rb,\ {\rm and}\\
m'+1 \leq k \leq m &\Rightarrow \lb \si'_{k} \rb = \lb \si_{k} \rb
\end{align*}
hold for some $m'$.
In addition, by changing the sign of $a_k$, we may assume that $\Si_{0, k} = \Si_{0,1}$ for each $k \in \{1, \ldots, m'\}$.
We set
\[
\lb \tau_k \rb
:= \lb \Si_{0,1}', \Si_{0,k}, \Si_{1,k}, \ldots, \Si_{n,k} \rb
= \lb \Si_{0,1}', \Si_{0,1}, \Si_{1,k}\ldots, \Si_{n,k} \rb
\]
for each $k \in \{1, \ldots, m'\}$.
Then one can check that
$\del(\sum_{k=1}^{m'} a_k \lb \tau_k \rb) = z-z'$.

\end{rem}

\begin{bibdiv}
\begin{biblist}

\bib{BK}{article}{
	author = {D. Baraglia},
	author = {H. Konno},
	title = {A gluing formula for families Seiberg-Witten invariants},
	journal={Geom. Topol.},
	volume={24},
	date={2020},
      NUMBER = {3},
      PAGES = {1381--1456},
       DOI = {10.2140/gt.2020.24.1381},
}

\bib{MR2025298}{article}{
   author={Bauer, Stefan},
   author={Furuta, Mikio},
   title={A stable cohomotopy refinement of Seiberg-Witten invariants. I},
   journal={Invent. Math.},
   volume={155},
   date={2004},
   number={1},
   pages={1--19},
   issn={0020-9910},
   review={\MR{2025298}},
   doi={10.1007/s00222-003-0288-5},
}

\bib{MR1171888}{article}{
   author={Donaldson, S. K.},
   title={Yang-Mills invariants of four-manifolds},
   conference={
      title={Geometry of low-dimensional manifolds, 1},
      address={Durham},
      date={1989},
   },
   book={
      series={London Math. Soc. Lecture Note Ser.},
      volume={150},
      publisher={Cambridge Univ. Press, Cambridge},
   },
   date={1990},
   pages={5--40},
   review={\MR{1171888}},
}

\bib{MR1339810}{article}{
   author={Donaldson, S. K.},
   title={The Seiberg-Witten equations and $4$-manifold topology},
   journal={Bull. Amer. Math. Soc. (N.S.)},
   volume={33},
   date={1996},
   number={1},
   pages={45--70},
   issn={0273-0979},
   review={\MR{1339810}},
}

\bib{MR2052970}{article}{
   author={Fr{\o}yshov, Kim A.},
   title={An inequality for the $h$-invariant in instanton Floer theory},
   journal={Topology},
   volume={43},
   date={2004},
   number={2},
   pages={407--432},
   issn={0040-9383},
   review={\MR{2052970 (2005c:57043)}},
   doi={10.1016/S0040-9383(03)00049-1},
}

\bib{MR624817}{article}{
   author={Harvey, W. J.},
   title={Boundary structure of the modular group},
   conference={
      title={Riemann surfaces and related topics: Proceedings of the 1978
      Stony Brook Conference},
      address={State Univ. New York, Stony Brook, N.Y.},
      date={1978},
   },
   book={
      series={Ann. of Math. Stud.},
      volume={97},
      publisher={Princeton Univ. Press, Princeton, N.J.},
   },
   date={1981},
   pages={245--251},
   review={\MR{624817}},
}

\bib{MR1177053}{article}{
   author={Kakimizu, Osamu},
   title={Finding disjoint incompressible spanning surfaces for a link},
   journal={Hiroshima Math. J.},
   volume={22},
   date={1992},
   number={2},
   pages={225--236},
   issn={0018-2079},
   review={\MR{1177053}},
}

\bib{Konno}{article}{
	author = {H. Konno},
	title = {Bounds on genus and configurations of embedded surfaces in 4-manifolds},
   journal={J. Topol.},
   volume={9},
   date={2016},
   number={4},
   pages={1130-1152},
   doi={10.1112/jtopol/jtw021},
}
\bib{MR1306022}{article}{
   author={Kronheimer, P. B.},
   author={Mrowka, T. S.},
   title={The genus of embedded surfaces in the projective plane},
   journal={Math. Res. Lett.},
   volume={1},
   date={1994},
   number={6},
   pages={797--808},
   issn={1073-2780},
   review={\MR{1306022 (96a:57073)}},
   doi={10.4310/MRL.1994.v1.n6.a14},
}
\bib{MR1492131}{article}{
   author={Kronheimer, P. B.},
   author={Mrowka, T. S.},
   title={Scalar curvature and the Thurston norm},
   journal={Math. Res. Lett.},
   volume={4},
   date={1997},
   number={6},
   pages={931--937},
   issn={1073-2780},
   review={\MR{1492131 (98m:57039)}},
   doi={10.4310/MRL.1997.v4.n6.a12},
}
\bib{MR1868921}{article}{
   author={Li, Tian-Jun},
   author={Liu, Ai-Ko},
   title={Family Seiberg-Witten invariants and wall crossing formulas},
   journal={Comm. Anal. Geom.},
   volume={9},
   date={2001},
   number={4},
   pages={777--823},
   issn={1019-8385},
   review={\MR{1868921 (2002k:57074)}},
}
\bib{MR537731}{article}{
   author={Mandelbaum, Richard},
   title={Decomposing analytic surfaces},
   conference={
      title={Geometric topology (Proc. Georgia Topology Conf., Athens, Ga.,
      1977)},
   },
   book={
      publisher={Academic Press, New York-London},
   },
   date={1979},
   pages={147--217},
   review={\MR{537731}},
}
\bib{MR0491730}{book}{
   author={Moishezon, Boris},
   title={Complex surfaces and connected sums of complex projective planes},
   series={Lecture Notes in Mathematics, Vol. 603},
   note={With an appendix by R. Livne},
   publisher={Springer-Verlag, Berlin-New York},
   date={1977},
   pages={i+234},
   isbn={3-540-08355-3},
   review={\MR{0491730}},
}
\bib{MR755006}{book}{
   author={Munkres, James R.},
   title={Elements of algebraic topology},
   publisher={Addison-Wesley Publishing Company, Menlo Park, CA},
   date={1984},
   pages={ix+454},
   isbn={0-201-04586-9},
   review={\MR{755006}},
}
\bib{MR3159966}{article}{
   author={Nouh, Mohamed Ait},
   title={The minimal genus problem in $\Bbb{C}\Bbb{P}^2\#\Bbb{C}\Bbb{P}^2$},
   journal={Algebr. Geom. Topol.},
   volume={14},
   date={2014},
   number={2},
   pages={671--686},
   issn={1472-2747},
   review={\MR{3159966}},
   doi={10.2140/agt.2014.14.671},
}
\bib{MR1635698}{article}{
   author={Ruan, Yongbin},
   title={Virtual neighborhoods and the monopole equations},
   conference={
      title={Topics in symplectic $4$-manifolds},
      address={Irvine, CA},
      date={1996},
   },
   book={
      series={First Int. Press Lect. Ser., I},
      publisher={Int. Press, Cambridge, MA},
   },
   date={1998},
   pages={101--116},
   review={\MR{1635698 (2000e:57054)}},
}
\bib{MR1671187}{article}{
   author={Ruberman, Daniel},
   title={An obstruction to smooth isotopy in dimension $4$},
   journal={Math. Res. Lett.},
   volume={5},
   date={1998},
   number={6},
   pages={743--758},
   issn={1073-2780},
   review={\MR{1671187 (2000c:57061)}},
   doi={10.4310/MRL.1998.v5.n6.a5},
}
\bib{MR1734421}{article}{
   author={Ruberman, Daniel},
   title={A polynomial invariant of diffeomorphisms of 4-manifolds},
   conference={
      title={Proceedings of the Kirbyfest},
      address={Berkeley, CA},
      date={1998},
   },
   book={
      series={Geom. Topol. Monogr.},
      volume={2},
      publisher={Geom. Topol. Publ., Coventry},
   },
   date={1999},
   pages={473--488 (electronic)},
   review={\MR{1734421 (2001b:57073)}},
   doi={10.2140/gtm.1999.2.473},
}
\bib{MR1874146}{article}{
   author={Ruberman, Daniel},
   title={Positive scalar curvature, diffeomorphisms and the Seiberg-Witten
   invariants},
   journal={Geom. Topol.},
   volume={5},
   date={2001},
   pages={895--924 (electronic)},
   issn={1465-3060},
   review={\MR{1874146 (2002k:57076)}},
   doi={10.2140/gt.2001.5.895},
}
\bib{Salamon}{article}{
	author = {D. Salamon},
	title = {Spin geometry and Seiberg--Witten invariants},
	eprint = {https://people.math.ethz.ch/~salamon/PREPRINTS/witsei.pdf},
}
\bib{MR2064429}{article}{
   author={Strle, Sa{\v{s}}o},
   title={Bounds on genus and geometric intersections from cylindrical end
   moduli spaces},
   journal={J. Differential Geom.},
   volume={65},
   date={2003},
   number={3},
   pages={469--511},
   issn={0022-040X},
   review={\MR{2064429 (2005c:57042)}},
}

\bib{MR163323}{article}{
   author={Wall, C. T. C.},
   title={Diffeomorphisms of $4$-manifolds},
   journal={J. London Math. Soc.},
   volume={39},
   date={1964},
   pages={131--140},
   issn={0024-6107},
   review={\MR{163323}},
   doi={10.1112/jlms/s1-39.1.131},
}

\bib{MR0163324}{article}{
   author={Wall, C. T. C.},
   title={On simply-connected $4$-manifolds},
   journal={J. London Math. Soc.},
   volume={39},
   date={1964},
   pages={141--149},
   issn={0024-6107},
   review={\MR{0163324}},
   doi={10.1112/jlms/s1-39.1.141},
}

\bib{MR1306021}{article}{
   author={Witten, Edward},
   title={Monopoles and four-manifolds},
   journal={Math. Res. Lett.},
   volume={1},
   date={1994},
   number={6},
   pages={769--796},
   issn={1073-2780},
   review={\MR{1306021}},
   doi={10.4310/MRL.1994.v1.n6.a13},
}
\end{biblist}
\end{bibdiv}

\end{document}